\newtheorem{thm}{Theorem}[section]
\newtheorem{lem}[thm]{Lemma}
\newtheorem{cor}[thm]{Corollary}
\newtheorem{prop}[thm]{Proposition}
\newtheorem{rem}[thm]{Remark}
\newtheorem{rmk}[thm]{Remark}
\newtheorem{defi}[thm]{Definition}
\theoremstyle{definition}
\newtheorem*{exam}{Example}
\newcommand{\BD}[1]{\mathbf{#1}}
\newcommand{\bd}[1]{\mathbf{#1}}
\newcommand{\ud}{\mathrm{d}}
\newcommand{\norm}[1]{\left\lVert#1\right\rVert}
\newcommand{\N}{\mathbb{N}}
\newcommand{\T}{\mathbb{T}}
\newcommand{\R}{\mathbb{R}}
\newcommand{\Z}{\mathbb{Z}}
\newcommand{\Q}{\mathbb{Q}}
\newcommand{\MA}{\mathcal{A}}
\newcommand{\MG}{\mathcal{G}}
\newcommand{\MS}{\mathcal{S}}
\newcommand{\MM}{\mathcal{M}}
\newcommand{\MK}{\mathcal{K}}
\newcommand{\MH}{\mathcal{H}}
\makeatletter\@addtoreset{equation}{section}\makeatother %
\titleformat{\section}{\centering\normalsize}{\textsc{\thesection.}}{0.5em}{\textsc}
\titleformat{\subsection}[runin]{\normalsize}{\textbf{\thesubsection.}}{0.3em}{\textbf}
\begin{document}
\begin{CJK*}{GBK}{song}
\author{Wen-Long Li}
\address{Wen-Long Li, School of Mathematics, Sun Yat-Sen University, Guangzhou, 510275, P. R. China}
\email{liwenlong@mail.sysu.edu.cn}



\title{Mountain pass type solutions for a generalized Frenkel-Kontorova model}

\begin{abstract}
We study a generalized Frenkel-Kontorova model and
obtain periodic and heteroclinic mountain pass solutions.
Heteroclinic mountain pass solution in the second laminations is new to the generalized Frenkel-Kontorova model.
Our proof follows that of Bolotin and Rabinowitz for an Allen-Cahn equation,
which is different with heat flow method
for finding critical point of Frenkel-Kontorova model
in the literature.
The proofs depend on suitable choices of functionals and working spaces.
We also study the multiplicity of these mountain pass solutions.
\end{abstract}

\subjclass[2020]{Primary: 49J35; Secondary: 74G22, 74G35}

\keywords{Mountain pass solution; periodic solution; heteroclinic solution; Frenkel-Kontorova model}

\date{\today}

\maketitle


\section{Introduction}\label{sec:intro}

In this paper, we study a generalized Frenkel-Kontorova (or FK) model.
To introduce the FK model, we need some notations.
Let $\bd{i},\bd{j},\bd{k}$ (resp. $i,j,k$), etc. denote the elements of $\Z^n$ (resp. $\Z$)
and define $\norm{\bd{i}}:=\sum_{j=1}^{n}|\bd{i}_j|$.
Fix $r\in\N$ and set $B_{\bd{0}}^{r}=\{\bd{k}\in\Z^n \,|\, \norm{\bd{k}}\leq r\}$.
Assume that $s\in C^2(\R^{B_{\BD{0}}^{r}}, \R)$ satisfies (cf. \cite{mramor}):
\begin{enumerate}[({S}1)]
  \item \label{eq:S1} $s(u+1_{B_{\BD{0}}^{r}})=s(u)$, where $1_{B_{\BD{0}}^{r}}$ is the constant function $1$ on $B_{\BD{0}}^{r}$;
  \item \label{eq:S2} $s$ is bounded from below and coercive in the following sence,
      \begin{equation*}
        \lim_{|u(\BD{k})-u(\BD{j})|\to \infty}s(u)=\infty, \textrm{ for $\BD{k}, \BD{j}\in B_{\BD{0}}^{r}$ with $\norm{\BD{k}-\BD{j}}=1$;}
      \end{equation*}

  \item \label{eq:S3} $\partial_{\BD{k}, \BD{j}}s \leq 0$ for $\BD{k}, \BD{j}\in B_{\BD{0}}^{r}$ with $\BD{k}\neq \BD{j}$, while $\partial_{\BD{0},\BD{j}}s<0$ for $\norm{\BD{j}}=1$;
  \item \label{eq:S4} there is some constant $C$ such that $|\partial_{\bd{i},\bd{k}}s|\leq C$ for all $\bd{i}, \bd{k}\in B_{\BD{0}}^{r}$.
\end{enumerate}
For $u\in\R^{\Z^n}$, set
\begin{equation*}
  S_{\BD{j}}(u)=s(\tau_{-\BD{j}_n}^{n}\cdots \tau_{-\BD{j}_1}^{1}u|_{B_{\BD{0}}^{r}}),
\end{equation*}
where $\tau_{-k}^{j}: \R^{\Z^n}\to \R^{\Z^n}$ is defined by $(\tau_{-k}^{j}u)(\BD{i})=u(\BD{i}+k\BD{e}_j)$ with $\bd{e}_j=(0,\cdots,1,\cdots,0)$, i.e., the $j$th component is $1$ and others $0$.
With these local potentials $S_{\BD{j}}$, we can define the formal sum
\begin{equation*}
\sum_{\BD{j}\in\Z^n}S_{\BD{j}}(u)
\end{equation*}
and its Euler-Lagrange equation
\begin{equation}\label{eq:PDE}
    \sum_{\BD{j}\in\Z^n}\partial_{\BD{i}}S_{\BD{j}}(u)=\sum_{\BD{j}:\norm{\BD{j}-\BD{i}}\leq r}\partial_{\BD{i}}S_{\BD{j}}(u)=0, \quad\textrm{ for all } \BD{i}\in\Z^n.
\end{equation}
\eqref{eq:PDE} is the equation of our generalized FK moldel.

FK model was first proposed in 1938 (\cite{FK}), since then it
``has become one of the fundamental and
universal tools of low-dimensional nonlinear physics'' (\cite[p. VII, line 16]{Braun}).
FK model is constituted by a chain of atoms subjected to a periodic potential
and is described by the following
equation:
\begin{equation*}
  \frac{\ud ^2 u}{\ud t^2}(i)-[u(i+1)+u(i-1)-2u(i)]+V'(u(i))=0, \quad \textrm{for all $i\in\Z$.}
\end{equation*}
Here $V\in C^2(\R,\R)$ is $1$-periodic.
Equilibrium or stationary state of FK model is a function $u:\Z\to \R$ satisfying
\begin{equation}\label{eq:1.2al}
  -[u(i+1)+u(i-1)-2u(i)]+V'(u(i))=0,\quad \textrm{for all $i\in\Z$.}
\end{equation}
Our generalized FK model \eqref{eq:PDE} is a generalization of \eqref{eq:1.2al} by setting
\begin{equation*}
s(u|_{B_{0}^{1}})=\frac{1}{8}\left\{[u(1)-u(0)]^2+[u(-1)-u(0)]^2\right\}+V(u(0))
\end{equation*}
and $S_{j}(u)=s(\sigma_{-j}u|_{B_{0}^{1}})$, where $(\sigma_{-j}u)(\cdot)=u(\cdot+j)$.
So solutions of \eqref{eq:PDE} are also called equilibrium or stationary state of the generalized FK model.

In 1983, Aubry and Le Daeron (\cite{Aubry}) studied minimal solutions of \eqref{eq:1.2al} and obtained the classification of minimal solutions.
Minimal solutions are one of the important classes of equilibrium state.
For \eqref{eq:1.2al}, a function $u$ is said to be \emph{minimal} if
\begin{equation}\label{eq:minimal888}
  \sum_{j\in \Z}\left(S_{j}(u+v)-S_{j}(u)\right)\geq 0
\end{equation}
for any $v$ with $\{i\in\Z \,|\,v(i)\neq 0\}$ a finite set.
Aubry and Le Daeron found that minimal solution $u$ did not cross with any of its translation $u(\cdot-j)+l$,
which led to an
oriented homeomorphism map of a circle and then
a rotation number.
Using rotation number Aubry and Le Daeron made the classification of minimal solutions.
Now their results are called Aubry-Mather theory because Mather (\cite{Mather}) obtained similar results for monotone twist maps of annulus.

After the establishment of Aubry-Mather theory, Moser \cite{Moser} attempted to generalize this theory to elliptic PDE.
He found that for higher dimensional space, minimal solution might cross with its translation.
So he posed another property, i.e., without self-intersections on minimal solution.
In other words, Moser asked $u$ satisfied one and only one of the following inequality holds:
\begin{equation}\label{eq:moser8588}
  u(x-j\bd{e}_k)+l>u, \quad\textrm{or  }\,\, u(x-j\bd{e}_k)+l=u, \quad\textrm{or  }\,\, u(x-j\bd{e}_k)+l<u.
\end{equation}
Moser and then Bangert (\cite{Bangert}) studied a class of elliptic PDE and they obtained similar results of Aubry-Mather theory.
Now their results are called Moser-Bangert theory (\cite{RS}).
Bolotin, Rabinowitz, Stredulinsky (\cite{RS, Rabi2006, Rabi2007, Rabi2014}) studied an Allen-Cahn equation, which belonged to the elliptic PDE of Moser and Bangert.
They used variational methods to construct more homoclinic and heteroclinic solutions of the Allen-Cahn equation other than Moser and Bangert's.

In \cite{mramor, Miao, LC}, Birkhoff minimizers (corresponding to minimal and without self-intersections solutions in Moser-Bangert theory) have been established and multitransition solutions was constructed in \cite{LC1}.
In this paper, we shall use the methods of \cite{Rabi2006, Rabi2007} to establish a new type of solution, mountain pass solution.
Noting that in \cite{mramor, Miao, LC, LC1}, \eqref{eq:PDE} was studied without the assumption (S\ref{eq:S4}) except in \cite{mramor}.
Our results can be seen as a new proof and a refinement of some results of \cite{mramor} (see also \cite{dev20071, dev2007}).
Note that we only consider the case of rotation vector $\alpha\in\Q^n$.
In \cite{mramor}, Mountain Pass Theorem was also used to establish critical point.
But to prove Mountain Pass Theorem (cf. \cite[Lemma 8.6]{mramor}),
Mramor and Rink asked the functional to be a Morse function.
If the functional is a Morse function, they obtained a ghost circle which contained a periodic mountain pass solution.
When the functional is not a Morse function,
using a limiting progress,
Mramor and Rink established a ghost circle that contained a stationary solution.
If a gap of periodic minimal and Birkhoff solutions
is not filled up by minimal solutions,
the above stationary solution should be not minimal.
The proofs of this paper are more direct than that of \cite{mramor}.
We also establish heteroclinic
mountain pass solution in the second laminations (please see \cite{Miao} for the definition of second laminations)
while
Mramor and Rink's result only holds for the ``first'' lamination.

But we point out that in \cite{mramor}, the authors obtained non-minimal solution
for rotation vector $\alpha\in\R^n\setminus \Q^n$ such that the Aubry-Mather set had gap, provided that ghost circle was not consists of minimizers.
In \cite{dev20071, dev2007}, the authors showed that there was some critical point in the gap of ground states of some FK model
for any rotation vector $\alpha\in\R^n$ such that gap (in the ``first'' lamination) condition held.
Our result does not cover these cases and
we limit ourselves in the case that $\alpha\in\Q^n$.
We also prove the multiplicity of mountain pass solutions which is new to this generalized FK model.
%
Other FK type models (cf. \cite{dev20071, dev2007} and references there in)
may be studied using the method of the present paper
and will be considered in the future.

This paper is organized as follows. We introduce some definitions and lemmas in
Section \ref{sec:pre}.
In Section \ref{sec:mountainpass}, periodic mountain pass solution is established and
it is proved that there are infinitely many solutions of this type.
Heteroclinic mountain pass solution is considered and the multiplicity is studied
in Section \ref{sec:2006}.
In Appendix \ref{sec:app1}, we present the detailed proofs of some properties of Section \ref{sec:mountainpass}.
A heat flow method for proving the existence of mountain pass solution is also included in Appendix \ref{sec:app1}.

\section{Preliminary}\label{sec:pre}

We review some definitions and some lemmas of \cite{mramor, Miao, LC, LC1}.
Assume $s$ satisfies (S\ref{eq:S1})-(S\ref{eq:S3}) in this section.
For functions $u,v\in\R^{\Z^n}$,
$v<u$ means $v(\bd{i})<u(\bd{i})$ for all $\bd{i}\in \Z^n$, and similarly one define $=, >, \geq, \leq,$ etc.
The following lemmas provide important comparison results.
\begin{lem}[{cf. \cite[Lemma 2.6]{Miao}}]\label{lem:2.6miao}
For $u, v\in\R^{\Z^n}$ and an arbitrary finite set $B\subset \Z^n$, we have
\begin{equation*}
    \sum_{\bd{j}\in B}S_{\bd{j}}(\max(u,v))+\sum_{\bd{j}\in B}S_{\bd{j}}(\min(u,v))\leq
    \sum_{\bd{j}\in B}S_{\bd{j}}(u)+\sum_{\bd{j}\in B}S_{\bd{j}}(v).
\end{equation*}
\end{lem}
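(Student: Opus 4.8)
The plan is to reduce the asserted inequality to a single–site \emph{submodularity} property of the local potential $s$, and then to deduce that property from hypothesis (S\ref{eq:S3}). First I would note that both $\max$ and $\min$ commute with each shift operator $\tau_{-k}^{j}$ and with restriction to $B_{\BD{0}}^{r}$; hence, for each $\bd{j}\in B$,
\begin{equation*}
  S_{\bd{j}}(\max(u,v))=s(a\vee b),\qquad S_{\bd{j}}(\min(u,v))=s(a\wedge b),
\end{equation*}
where $a:=\tau_{-\bd{j}_{n}}^{n}\cdots\tau_{-\bd{j}_{1}}^{1}u|_{B_{\BD{0}}^{r}}$, $b$ is the analogous restriction of $v$, and $\vee,\wedge$ denote pointwise maximum and minimum on $B_{\BD{0}}^{r}$. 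Summing over $\bd{j}\in B$, it is therefore enough to show that
\begin{equation*}
  s(a\vee b)+s(a\wedge b)\leq s(a)+s(b)\qquad\text{for all }a,b\in\R^{B_{\BD{0}}^{r}}.
\end{equation*}

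Next I would set $p:=(b-a)^{+}$ and $q:=(b-a)^{-}$, the positive and negative parts, so that $p,q\geq 0$ have \emph{disjoint} supports and $b=a+p-q$, $a\vee b=a+p$, $a\wedge b=a-q$. Writing $F(y):=s(y+p)-s(y)$, the inequality above is equivalent to $F(a-q)\geq F(a)$. Since $s\in C^{2}$ and every sum over $B_{\BD{0}}^{r}$ is finite, two applications of the fundamental theorem of calculus — first along the segment from $a-q$ to $a$, then along the segment from $y$ to $y+p$ — give
\begin{equation*}
  F(a)-F(a-q)=\int_{0}^{1}\!\!\int_{0}^{1}\sum_{\bd{k},\bd{l}}p(\bd{k})\,q(\bd{l})\,\partial_{\bd{k},\bd{l}}s\bigl(a-tq+up\bigr)\,\ud u\,\ud t.
\end{equation*}

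Finally I would invoke (S\ref{eq:S3}): a term in the sum is nonzero only when both $p(\bd{k})\neq 0$ and $q(\bd{l})\neq 0$, i.e. only when $\bd{k}$ and $\bd{l}$ lie in the disjoint supports of $p$ and $q$, so that $\bd{k}\neq\bd{l}$ and $\partial_{\bd{k},\bd{l}}s\leq 0$. Hence the integrand is nonpositive, $F(a)-F(a-q)\leq 0$, the required single–site inequality follows, and summing over $\bd{j}\in B$ finishes the proof. The whole argument is "soft", using only $C^{2}$ regularity, so I do not anticipate a genuine analytic obstacle; the one point that demands care — and the reason for splitting $b-a$ into $p$ and $q$ — is that the tempting interpolation from $(a,b)$ to $(a\vee b,a\wedge b)$ one coordinate at a time would bring in the diagonal second derivatives $\partial_{\bd{k},\bd{k}}s$, whose sign is uncontrolled, whereas the $p$/$q$ decomposition confines the computation to the off–diagonal mixed partials where (S\ref{eq:S3}) supplies the needed sign.
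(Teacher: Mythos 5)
The paper does not prove Lemma~\ref{lem:2.6miao}; it is quoted from Miao--Qin--Wang (\cite[Lemma~2.6]{Miao}) without an argument, so your proposal stands on its own. It is correct. The reduction to the single-site inequality $s(a\vee b)+s(a\wedge b)\leq s(a)+s(b)$ is legitimate: $\max$, $\min$, the shifts $\tau_{-k}^{j}$ and restriction to $B_{\BD{0}}^{r}$ all act pointwise and commute, so $S_{\bd{j}}(\max(u,v))=s(a\vee b)$ and $S_{\bd{j}}(\min(u,v))=s(a\wedge b)$ term by term, and the claim sums over $\bd{j}\in B$. With $p=(b-a)^{+}$, $q=(b-a)^{-}$ one has $a\vee b=a+p$, $a\wedge b=a-q$, $b=a+p-q$, and the identity $s(a\vee b)+s(a\wedge b)-s(a)-s(b)=F(a)-F(a-q)$ for $F(y)=s(y+p)-s(y)$ is exact; your double application of the fundamental theorem of calculus is correct, and since $p$ and $q$ have disjoint supports every nonvanishing term $p(\bd{k})q(\bd{l})\,\partial_{\bd{k},\bd{l}}s$ has $\bd{k}\neq\bd{l}$, so (S\ref{eq:S3}) makes the integrand nonpositive. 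One small inaccuracy in your closing remark: a coordinate-by-coordinate telescoping from $(a,b)$ to $(a\vee b,a\wedge b)$ does not in fact bring in the diagonal terms $\partial_{\bd{k},\bd{k}}s$ (at each step the coordinate being moved carries the same value in both arguments, so only mixed partials in the other coordinates appear); what actually breaks is that the intermediate pairs fail to be ordered, so the sign of the off-diagonal contribution is uncontrolled. Your $p$/$q$ decomposition is exactly the right fix for that, just for a slightly different reason than stated.
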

\begin{lem}[{cf. \cite[(3.1)]{LC}}]\label{lem:subsequ}
If $\{u_n\}_{n\in\N}$ satisfies $v\leq u_n\leq w$ for fixed $v,w\in \R^{\Z^n}$,
then there is a subsequence of $\{u_n\}_{n\in\N}$ converging pointwise.
\end{lem}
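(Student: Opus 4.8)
The plan is to run the classical Cantor diagonal argument, exploiting that $\Z^n$ is countable and that each coordinate stays in a fixed compact interval of $\R$. First I would fix an enumeration $\Z^n=\{\bd{p}_1,\bd{p}_2,\dots\}$, which is possible since $\Z^n$ is countable. For each $k$ the real sequence $\{u_n(\bd{p}_k)\}_{n\in\N}$ lies in the bounded interval $[v(\bd{p}_k),w(\bd{p}_k)]$ by hypothesis, so the Bolzano--Weierstrass theorem applies coordinatewise.

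Next I would extract nested subsequences. Start with $N_0=\N$. Having chosen an infinite index set $N_{k-1}\subseteq\N$, use Bolzano--Weierstrass to pick an infinite subset $N_k\subseteq N_{k-1}$ along which $\{u_n(\bd{p}_k)\}_{n\in N_k}$ converges; since $N_k\subseteq N_{k-1}\subseteq\cdots\subseteq N_1$, the sequence indexed by $N_k$ still converges at each of $\bd{p}_1,\dots,\bd{p}_{k-1}$ as well. Then form the diagonal sequence: let $n_k$ be the $k$-th smallest element of $N_k$, chosen so that $n_1<n_2<\cdots$ (possible because each $N_k$ is infinite and we may always discard finitely many small indices). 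Then $\{u_{n_k}\}_{k\in\N}$ is a genuine subsequence of $\{u_n\}$.

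Finally I would check pointwise convergence. Fix any $\bd{q}\in\Z^n$, say $\bd{q}=\bd{p}_m$. For $k\geq m$ the index $n_k$ belongs to $N_k\subseteq N_m$, so the tail $\{u_{n_k}(\bd{p}_m)\}_{k\geq m}$ is a subsequence of the convergent sequence $\{u_n(\bd{p}_m)\}_{n\in N_m}$ and hence converges; therefore $\{u_{n_k}(\bd{q})\}_k$ converges. As $\bd{q}$ was arbitrary, $\{u_{n_k}\}$ converges pointwise on $\Z^n$, which is the assertion.

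There is no real obstacle here: the argument is the standard diagonal extraction, and the only points requiring a word of care are the countability of $\Z^n$ (immediate), the uniform bound that makes Bolzano--Weierstrass available in each coordinate, and the bookkeeping ensuring the diagonal indices are strictly increasing so that one obtains an honest subsequence of the original $\{u_n\}$.
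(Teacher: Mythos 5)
Your proof is correct; it is the classical Cantor diagonal argument, which is the standard (and essentially the only natural) way to prove this compactness statement. The paper itself offers no proof—it simply cites \cite[(3.1)]{LC}—and your argument matches what any such proof must look like: countability of $\Z^n$, coordinatewise Bolzano--Weierstrass from the uniform bound $v\leq u_n\leq w$, nested subsequence extraction, and the diagonal with the (correctly noted) care to keep the indices strictly increasing.
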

\begin{lem}[{cf. \cite[Lemma 2.5]{Miao}; \cite[Lemma 4.5]{mramor}}]\label{lem:2.5miao}
Assume that $u$ and $v$ are solutions of \eqref{eq:PDE} and $u\leq v$. Then either $u<v$ or $u=v$.
\end{lem}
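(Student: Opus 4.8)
The plan is to argue by contradiction using the strict convexity/monotonicity built into hypotheses (S\ref{eq:S3}). Suppose $u \le v$, both solve \eqref{eq:PDE}, but $u \ne v$, so there is some $\bd{i}_0 \in \Z^n$ with $u(\bd{i}_0) = v(\bd{i}_0)$ while $u(\bd{i}_1) < v(\bd{i}_1)$ for some $\bd{i}_1$. By connectedness of $\Z^n$ through nearest-neighbor steps, I can assume (after translating the local potentials, which is legitimate by the shift-equivariant definition of the $S_{\bd{j}}$) that the equality point $\bd{i}_0$ and the strict-inequality point are within distance $r$ of each other, so that they sit in a common local potential $S_{\bd{j}}$ with $\partial_{\bd{i}_0, \bd{i}_1} s < 0$ (or at least $\le 0$, with strict somewhere along a chain).

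First I would subtract the two Euler-Lagrange equations at the index $\bd{i}_0$:
\begin{equation*}
0 = \sum_{\bd{j}:\norm{\bd{j}-\bd{i}_0}\le r}\bigl(\partial_{\bd{i}_0}S_{\bd{j}}(v) - \partial_{\bd{i}_0}S_{\bd{j}}(u)\bigr)
= \int_0^1 \sum_{\bd{j},\bd{k}} \partial_{\bd{i}_0,\bd{k}}S_{\bd{j}}\bigl(u + t(v-u)\bigr)\,(v-u)(\bd{k})\,\ud t.
\end{equation*}
Since $v - u \ge 0$ everywhere, $(v-u)(\bd{i}_0) = 0$, and every off-diagonal second derivative $\partial_{\bd{i}_0,\bd{k}}s$ is $\le 0$ by (S\ref{eq:S3}), each term in the sum over $\bd{k} \ne \bd{i}_0$ is $\le 0$; the diagonal term vanishes because $(v-u)(\bd{i}_0)=0$. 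Hence every term must be exactly $0$. For a nearest neighbor $\bd{k}$ of $\bd{i}_0$ the coefficient $\partial_{\bd{i}_0,\bd{k}}s$ is strictly negative (the second clause of (S\ref{eq:S3})), forcing $(v-u)(\bd{k}) = 0$ for all $\bd{k}$ with $\norm{\bd{k}-\bd{i}_0}=1$.

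Then I would propagate: the set $Z = \{\bd{i} : u(\bd{i}) = v(\bd{i})\}$ is nonempty and, by the argument just given applied at any point of $Z$, contains all nearest neighbors of each of its points; since the nearest-neighbor graph on $\Z^n$ is connected, $Z = \Z^n$, i.e.\ $u = v$, contradicting $u \ne v$. The main obstacle is the bookkeeping in the first step: one must be careful that when $\bd{i}_0$ sits in several overlapping local potentials $S_{\bd{j}}$, the coefficient of $(v-u)(\bd{k})$ in the assembled sum is $\sum_{\bd{j}} \partial_{\bd{i}_0,\bd{k}}S_{\bd{j}}$, and one needs this aggregate coefficient to be $\le 0$ (true termwise) and strictly negative for at least the nearest neighbors — this is exactly why (S\ref{eq:S3}) is stated the way it is, and invoking the cited Lemma 4.5 of \cite{mramor} / Lemma 2.5 of \cite{Miao} one may simply appeal to that computation rather than redo it. The boundedness hypothesis (S\ref{eq:S4}) is what guarantees the integrand above is integrable in $t$ and the sums are finite, so no convergence issue arises.
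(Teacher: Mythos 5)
Your argument is correct and is exactly the strong-maximum-principle propagation that the cited sources use; note the paper itself does not reprove this lemma but simply refers to \cite[Lemma 2.5]{Miao} and \cite[Lemma 4.5]{mramor}. Two small remarks. First, (S\ref{eq:S4}) is not needed here: the sums over $\bd{j}$ (only those with $\norm{\bd{j}-\bd{i}_0}\le r$ contribute) and over $\bd{k}$ (only $\norm{\bd{k}-\bd{j}}\le r$) are finite, and $s\in C^2$, so the integrand is a continuous function of $t$ on $[0,1]$ and there is no convergence question to resolve; consistently, the lemma is stated in Section~\ref{sec:pre} of the paper, where only (S\ref{eq:S1})--(S\ref{eq:S3}) are assumed. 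Second, your opening manoeuvre of translating so that the contact point $\bd{i}_0$ lies within distance $r$ of a strict-inequality point is superfluous and slightly obscures the logic: the contradiction hypothesis ``$u\neq v$ and $u(\bd{i}_0)=v(\bd{i}_0)$ somewhere'' plus the step showing $Z=\{\bd{i}:u(\bd{i})=v(\bd{i})\}$ is closed under taking nearest neighbors already yields $Z=\Z^n$ by connectedness of the lattice, without ever needing the equality point and a strict-inequality point to sit in a common local potential. The crux of your argument is exactly the right one: at a contact point every off-diagonal contribution $\partial_{\bd{i}_0,\bd{k}}S_{\bd{j}}\cdot(v-u)(\bd{k})$ is nonpositive, the diagonal term vanishes, so all terms vanish, and strict negativity of $\partial_{\bd{0},\bd{e}_l}s$ from (S\ref{eq:S3}) (applied to $S_{\bd{i}_0}$ along the whole convex segment) then kills $(v-u)$ at every nearest neighbor.
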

A function $u$ is said to \emph{have bounded action} if there exists $C>0$,
such that $|u(\bd{k})-u(\bd{j})|\leq C$ for all $\bd{k},\bd{j}\in\Z^n$ with $\norm{\bd{k}-\bd{j}}=1$ (cf. \cite[p. 1525, line -3]{Miao}, \cite[p. 1112, line -8]{LC}).
\begin{lem}[{cf. \cite[Lemma 2.4]{Miao}, \cite[Lemma 2.11]{LC}}]\label{lem:lc16321}
Assume $u,v\in\R^{\Z^n}$ have bounded action with bounded constant $C$.
Then there exists a constant $L=L(C,r)>0$ such that for any finite set $B\subset \Z^n$,
\begin{equation*}
 | \sum_{\bd{j}\in B}S_{\bd{j}}(u)-\sum_{\bd{j}\in B}S_{\bd{j}}(v)|\leq L\sum_{\bd{j}\in\bar{B}}|(u-v)(\bd{j})|.
\end{equation*}
Here the closure of a set $B$ is defined by $\bar{B}=\cup_{\bd{j}\in B}\{\bd{k}\in\Z^n\,|\, \norm{\bd{k}-\bd{j}}\leq r\}$.
\end{lem}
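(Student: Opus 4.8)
The plan is to reduce everything to a single-site Lipschitz estimate,
$|S_{\bd{j}}(u)-S_{\bd{j}}(v)|\le L_0\sum_{\norm{\bd{k}-\bd{j}}\le r}|(u-v)(\bd{k})|$ with $L_0=L_0(C,r)$, and then to sum over $\bd{j}\in B$. Fix $\bd{j}$. Since $S_{\bd{j}}$ is a $C^2$ function of the finitely many coordinates $u(\bd{k})$, $\norm{\bd{k}-\bd{j}}\le r$, the fundamental theorem of calculus along the segment $w_t:=v+t(u-v)$, $t\in[0,1]$, gives
\[
S_{\bd{j}}(u)-S_{\bd{j}}(v)=\int_0^1\sum_{\norm{\bd{k}-\bd{j}}\le r}\partial_{\bd{k}}S_{\bd{j}}(w_t)\,(u-v)(\bd{k})\,\ud t,
\]
so everything comes down to bounding $|\partial_{\bd{k}}S_{\bd{j}}(w_t)|$ uniformly in $t,\bd{k},\bd{j}$.

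This uses two observations. First, the segment preserves bounded action: for $\norm{\bd{k}-\bd{l}}=1$ the number $w_t(\bd{k})-w_t(\bd{l})$ is a convex combination of $u(\bd{k})-u(\bd{l})$ and $v(\bd{k})-v(\bd{l})$, hence has absolute value $\le C$; telescoping along a nearest-neighbour path of length $\le r$ yields $|w_t(\bd{k})-w_t(\bd{j})|\le rC$ whenever $\norm{\bd{k}-\bd{j}}\le r$. Second, by (S\ref{eq:S1}) the function $s$—and therefore $\nabla s$—is invariant under adding the constant integer function, so $\partial_{\bd{k}}S_{\bd{j}}$ is unchanged if we replace $w_t$ by $w_t-m(t)$ for any $m(t)\in\Z$. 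Choosing $m(t):=\lfloor w_t(\bd{j})\rfloor$, the local configuration $\tau_{-\bd{j}}(w_t-m(t))|_{B_{\bd{0}}^{r}}$ has its center value in $[0,1]$ and, by the first observation, every entry in the compact box $K:=[-rC-1,\,rC+1]^{B_{\bd{0}}^{r}}$. As $s\in C^2$, $\nabla s$ is bounded on $K$; put $L_0:=\max_{\xi\in K}|\nabla s(\xi)|$, which depends only on $C$ and $r$. Then $|\partial_{\bd{k}}S_{\bd{j}}(w_t)|=|\partial_{\bd{k}}S_{\bd{j}}(w_t-m(t))|\le L_0$, and the displayed identity gives the single-site estimate. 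Summing over $\bd{j}\in B$ and noting that each $\bd{k}\in\bar{B}$ lies in at most $\#B_{\bd{0}}^{r}$ of the sets $\{\bd{j}:\norm{\bd{k}-\bd{j}}\le r\}$ produces the assertion with $L:=L_0\cdot\#B_{\bd{0}}^{r}$.

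The only point requiring care is the translation in the second observation: one must shift $w_t$ as a whole (leaving the factors $(u-v)(\bd{k})$ in the integral untouched) and record that $s(u+1_{B_{\bd{0}}^{r}})=s(u)$ differentiates to $\nabla s(u+1_{B_{\bd{0}}^{r}})=\nabla s(u)$. Note that (S\ref{eq:S4}) is not used here: the bounded-action hypothesis together with the periodicity (S\ref{eq:S1}) already confines all the configurations that occur to the fixed compact set $K$, on which the $C^2$ regularity of $s$ supplies the gradient bound.
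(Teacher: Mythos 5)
The paper itself does not prove Lemma~\ref{lem:lc16321}; it is stated as a cited result from \cite{Miao} and \cite{LC}, and only its use is illustrated elsewhere in the text. That said, your proof is correct, and it is in fact exactly the argument the paper implicitly relies on when it needs the same kind of Lipschitz bound later (see the explanation after \eqref{eq:0428}: bounded action of the interpolated configuration plus the periodicity (S\ref{eq:S1}) give a uniform constant $L$). All the steps check out: the interpolant $w_t=v+t(u-v)$ inherits the bounded-action constant $C$ by convexity of each nearest-neighbour difference; telescoping along a coordinate-by-coordinate path that never leaves $B_{\bd{j}}^{r}$ gives $|w_t(\bd{k})-w_t(\bd{j})|\le rC$ for $\norm{\bd{k}-\bd{j}}\le r$; subtracting $\lfloor w_t(\bd{j})\rfloor$ and invoking the $1_{B_{\bd{0}}^{r}}$-periodicity of $s$ confines the local configuration to the compact box $K$, on which $\nabla s$ is bounded because $s\in C^2$; and the final double-counting over $\bd{j}\in B$ correctly produces the factor $\#B_{\bd{0}}^{r}$. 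Your remark that (S\ref{eq:S4}) is not used is also apposite and consistent with the paper: the lemma sits in Section~\ref{sec:pre}, where only (S\ref{eq:S1})--(S\ref{eq:S3}) are assumed, so a proof requiring (S\ref{eq:S4}) would have been out of place. In short, this is a faithful and complete reconstruction of the standard argument behind the cited lemma.
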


Similar to \eqref{eq:minimal888} and \eqref{eq:moser8588}, we introduce the following definition.
\begin{defi}\label{defi:2.3mramor}
\begin{itemize}
  \item (cf. \cite[Definition 2.3]{mramor}) A function $u:\Z^n \to \R$ is said to be minimal for potentials $S_{\BD{j}}$ (or for potential $s$) if for every finite subset $B\subset \Z^n$ and every $v:\Z^n\to \R$ with support, denoted by $supp (v)$, included in $int_r(B)$,
\begin{equation*}
    \sum_{\BD{j}\in B}(S_{\BD{j}}(u+v)-S_{\BD{j}}(u))\geq 0,
\end{equation*}
where the support of $v$ is $supp (v):=\{\BD{i}\in\Z^n\,|\, v(\BD{i})\neq 0\}$ and interior of $B$ is
$int_r(B)=\{\BD{i}\in B\,|\, \bd{i}+B_{\bd{0}}^{r}\subset B\}$.
  \item (cf. \cite[Definition 2.1]{Miao}, \cite[p.3, line 25]{RS}) A function $u$ is said to be Birkhoff if $\{ \tau_{j}^{k}u\,|\, j\in\Z \textrm{ and } 1\leq k \leq n \}$ is totally ordered, i.e., for all $j\in\Z$ and $1\leq k \leq n$, it follows that
\begin{equation*}
    \tau_{j}^{k}u<u, \textrm{\quad or \quad} \tau_{j}^{k}u=u, \textrm{\quad or \quad} \tau_{j}^{k}u>u.
\end{equation*}
\end{itemize}
\end{defi}
For $\bd{p}=(\bd{p}_1,\cdots, \bd{p}_n)\in\N^n$, let
\begin{equation*}
\begin{split}
  &\R^{\Z^{n}/(\bd{p}\Z^n)}\\
  :=&\{u:\Z^{n}\to \R \,|\, u(\bd{i}+\bd{p}_j \cdot\bd{e}_j)=u(\bd{i}),\quad \textrm{for any $j
  \in \{1,\cdots, n\}$ and $\bd{i}\in\Z^n$}\}.
  \end{split}
\end{equation*}
If $\bd{p}=(1,\cdots,1)\in \N^n$, we use $\R^{\Z^n /\Z^n}$ to replace $\R^{\Z^n /(\bd{p}\Z^n)}.$
Similarly for $\bd{q}=(\bd{q}_2,\cdots, \bd{q}_n)\in\N^{n-1}$, one define $\R^{\Z\times \Z^{n-1}/(\bd{q}\Z^{n-1})}$, which
consists of functions that is periodic in $\bd{i}_2, \cdots, \bd{i}_n$ with periods $\bd{q}_2, \cdots, \bd{q}_n$.

\subsection{Periodic minimal and Birkhoff solutions}\label{sec:4524573}
\

For $u\in\R^{\Z^n/\Z^n}$,
define $J_0 (u):=S_{\bd{0}}(u)$,
$c_0:=\inf_{u\in \R^{\Z^n/\Z^n}}J_0(u)$ and $\MM_0:=\{u\in\R^{\Z^n/\Z^n}\,|\, J_0 (u)=c_0\}$.
It was proved in \cite{LC} that $\MM_0(\neq \emptyset)$ was ordered and
consisted of minimal and Birkhoff solutions of \eqref{eq:PDE}.
Replacing $\R^{\Z^n/\Z^n}$ by $\R^{\Z^n/(\bd{p}\Z^n)}$
and minimizing the corresponding functional, we do not obtain more periodic solutions, as stated in the following.

For $\bd{p}=(\bd{p}_1,\cdots,\bd{p}_n)\in\N^n$,
let
$$\T^{\bd{p}}_{0}:=\{0,\cdots,\bd{p}_1-1\}\times\{0,\cdots,\bd{p}_2-1\}\times \cdots \times\{0,\cdots,\bd{p}_n-1\}.$$
and $\Gamma_{0}^{\bd{p}}:=\R^{\Z^{n}/(\bd{p}\Z^n)}$.
For $u\in \Gamma_0^{\bd{p}}$, define
\begin{equation}\label{eq:1.2}
  J^{\bd{p}}_{0}(u):=\sum_{\bd{j}\in\T^{\bd{p}}_{0}}S_{\bd{j}}(u).
\end{equation}
The following lemma was proved in \cite{LC} by Moser's method (cf. \cite{Moser}, see also \cite[Proposition 2.2]{RS}).
\begin{lem}[{cf. \cite[Proposition 3.1]{LC}}]\label{prop:2.2369852145}
Let $\BD{p}\in \N ^n$ and $
c_0 ^{\BD{p}}:=\inf_{u\in\Gamma_{0}^{\bd{p}}}J_{0}^{\bd{p}}(u).
$
Then $
\MM_0 ^{\BD{p}}:=\{u\in \Gamma_0 ^{\BD{p}}\,|\,J_{0}^{\bd{p}}(u)=c_0 ^{\BD{p}}\}\neq \emptyset.
$
Moreover, $\MM_0 ^{\BD{p}}=\MM_{0} $ and $c_0 ^{\BD{p}}=(\prod_{i=1}^n \BD{p}_i)c_{0}$.
\end{lem}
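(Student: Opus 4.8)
The plan is to run Moser's cut-and-paste argument (cf. \cite{Moser}, \cite[Proposition 2.2]{RS}) in the $\bd{p}$-periodic setting: first show the infimum $c_0^{\bd{p}}$ is attained, then show every minimizer is forced to be constant, and finally read off $\MM_0^{\bd{p}}=\MM_0$ and the scaling of the value. For existence I would take a minimizing sequence $u_m\in\Gamma_0^{\bd{p}}$ with $J_0^{\bd{p}}(u_m)\to c_0^{\bd{p}}$; using the $1$-periodicity (S\ref{eq:S1}) I can add an integer to each $u_m$ so that $u_m(\bd{0})\in[0,1)$. If some nearest-neighbour difference $|u_m(\bd{k})-u_m(\bd{j})|$ on the finite torus $\T_0^{\bd{p}}$ were unbounded along a subsequence, then by coercivity (S\ref{eq:S2}) one of the finitely many summands $S_{\bd{l}}(u_m)$, $\bd{l}\in\T_0^{\bd{p}}$, would diverge to $+\infty$ while the others stay bounded below (again by (S\ref{eq:S2})), contradicting $J_0^{\bd{p}}(u_m)\to c_0^{\bd{p}}<\infty$. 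Hence the $u_m$ are squeezed between two fixed constant functions, Lemma \ref{lem:subsequ} extracts a convergent subsequence, and continuity of $s$ puts the limit in $\MM_0^{\bd{p}}$; so $\MM_0^{\bd{p}}\ne\emptyset$.

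Next I would prove the two inequalities for $c_0^{\bd{p}}$. For a constant $u\in\R^{\Z^n/\Z^n}\subset\Gamma_0^{\bd{p}}$ every shift leaves $u$ unchanged, so $S_{\bd{j}}(u)=S_{\BD{0}}(u)=J_0(u)$ for all $\bd{j}$ and $J_0^{\bd{p}}(u)=(\prod_{i=1}^n\bd{p}_i)J_0(u)$; minimizing over $\MM_0$ gives $c_0^{\bd{p}}\le(\prod_{i=1}^n\bd{p}_i)c_0$. For the reverse direction I fix $u\in\MM_0^{\bd{p}}$ and compare it with its unit translates $\tau_1^k u$, $1\le k\le n$. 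A reindexing of the sum over the fundamental domain $\T_0^{\bd{p}}$, using that $u$ is $\bd{p}$-periodic, shows $J_0^{\bd{p}}(\tau_1^k u)=J_0^{\bd{p}}(u)=c_0^{\bd{p}}$, so $\tau_1^k u$ is again a minimizer. Since $\max(u,\tau_1^k u)$ and $\min(u,\tau_1^k u)$ both lie in $\Gamma_0^{\bd{p}}$, Lemma \ref{lem:2.6miao} with $B=\T_0^{\bd{p}}$ gives
\begin{equation*}
J_0^{\bd{p}}(\max(u,\tau_1^k u))+J_0^{\bd{p}}(\min(u,\tau_1^k u))\le J_0^{\bd{p}}(u)+J_0^{\bd{p}}(\tau_1^k u)=2c_0^{\bd{p}},
\end{equation*}
and since each term on the left is at least $c_0^{\bd{p}}$, both $\max(u,\tau_1^k u)$ and $\min(u,\tau_1^k u)$ are minimizers, hence solutions of \eqref{eq:PDE} by the first-variation argument of \cite{LC}.

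Now $\max(u,\tau_1^k u)\ge u$ and both solve \eqref{eq:PDE}, so Lemma \ref{lem:2.5miao} forces either $\max(u,\tau_1^k u)=u$ or $\max(u,\tau_1^k u)>u$ everywhere. The second alternative would give $\tau_1^k u>u$, and then iterating the order-preserving map $\tau_1^k$ a total of $\bd{p}_k$ times yields the impossible chain $u=\tau_{\bd{p}_k}^k u>\cdots>\tau_1^k u>u$; hence $\tau_1^k u\le u$, and iterating once more gives $u=\tau_{\bd{p}_k}^k u\le\cdots\le\tau_1^k u\le u$, so $\tau_1^k u=u$. Running this for each $k$ shows $u$ is constant, i.e. $u\in\R^{\Z^n/\Z^n}$, so $c_0^{\bd{p}}=J_0^{\bd{p}}(u)=(\prod_{i=1}^n\bd{p}_i)J_0(u)\ge(\prod_{i=1}^n\bd{p}_i)c_0$. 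Combined with the first inequality, $c_0^{\bd{p}}=(\prod_{i=1}^n\bd{p}_i)c_0$; then $J_0(u)=c_0^{\bd{p}}/(\prod_i\bd{p}_i)=c_0$, so $u\in\MM_0$, and conversely every element of $\MM_0$ lies in $\MM_0^{\bd{p}}$ by the constant computation, giving $\MM_0^{\bd{p}}=\MM_0$.

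I expect the main obstacle to be the middle part: carefully justifying the translation invariance $J_0^{\bd{p}}(\tau_1^k u)=J_0^{\bd{p}}(u)$ and that $J_0^{\bd{p}}$-minimizers genuinely solve \eqref{eq:PDE} (both a matter of bookkeeping with the $\bd{p}$-periodicity of the sum over $\T_0^{\bd{p}}$), and then upgrading $\tau_1^k u\le u$ to an equality via Lemma \ref{lem:2.5miao} together with periodicity. The coercivity/existence step and the final identification $\MM_0^{\bd{p}}=\MM_0$ are routine once this is in place.
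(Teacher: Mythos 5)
Your proof is correct and follows essentially the same Moser-type cut-and-paste route that the paper references for this lemma (it cites \cite[Proposition 3.1]{LC}, proved by Moser's method as in \cite[Proposition 2.2]{RS}, rather than giving its own argument). The key steps---direct method for existence via (S\ref{eq:S2}) and Lemma \ref{lem:subsequ}, Lemma \ref{lem:2.6miao} to make $\max(u,\tau_1^k u)$ and $\min(u,\tau_1^k u)$ minimizers, then Lemma \ref{lem:2.5miao} plus iteration of the order-preserving shift over a full period to force $\tau_1^k u=u$---are exactly the standard ones.
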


Suppose that $\MM_0 $
constitutes a lamination, or in other words,
there is a gap in $\MM_0$, i.e.,
\begin{equation}\label{eq:*0}
\textrm{there are $v_0,w_0\in \mathcal{M}_0$ with $v_0<w_0$ such that $v_0, w_0$ are adjacent}.\tag{$*_0$}
\end{equation}
Here \emph{adjacent} means there does not exist $u\in\MM_0$ such that $v_0\leq u\leq w_0$.
In \cite{LC}, heteroclinic minimal and Birkhoff solutions are constructed under condition \eqref{eq:*0}.

\subsection{Heteroclinic minimal and Birkhoff solutions in $\bd{i}_1$}\label{sec:312432}
\

To construct heteroclinic minimal and Birkhoff solutions, assume that \eqref{eq:*0} holds.
Let $\bd{T}_i=i\bd{e}_1$. Set
$\hat{\Gamma}_1(v_0,w_0):=\{u\in \R^{\Z\times \Z^{n-1}/\Z^{n-1}}\,|\,v_0\leq u\leq w_0\}$.
For $u\in \hat{\Gamma}_1(v_0,w_0)$, define
$J_{1;p,q}(u):=\sum_{i=p}^{q}[J_0(\tau_{-i}^{1}u)-c_0]$,
then it was proved in \cite[Proposition 3.2]{LC} that $J_{1;p,q}(u)\geq -K_1$ for some $K_{1}=K_{1}(v_0,w_0)\geq 0$.
Thus we can define
\begin{equation}\label{eq:haha1}
J_{1}(u):=\liminf_{p\to -\infty\atop q\to \infty}J_{1;p,q}(u),
\end{equation}
and we have (by \cite[Lemma 3.3]{LC})
\begin{equation}\label{eq:pppp5}
  J_{1;p,q}(u)\leq J_1(u)+2K_1.
\end{equation}
Set
\begin{equation*}
\begin{split}
  \Gamma_1(v_0,w_0)
  :=\{u\in\hat{\Gamma}_1(v_0,w_0)\,|\,  &\lim_{i\to -\infty}|(u-v_0)(\bd{T}_i)|=0,\\
  &\lim_{i\to \infty}|(u-w_0)(\bd{T}_i)|=0\}.
  \end{split}
\end{equation*}
For $u\in \Gamma_1(v_0,w_0)$, as was proved in \cite[Proposition 3.4]{LC}, if $J_1(u)<\infty$, then
\begin{equation*}
J_1(u)=\lim_{p\to -\infty\atop q\to \infty}J_{1;p,q}(u), \quad \textrm{i.e., } J_1(u)=\sum_{i\in\Z}[J_0(\tau_{-i}^{1}u)-c_0].
\end{equation*}
In other words, $\liminf$ becomes $\lim$ in the definition of $J_1(u)$.
Set $$c_1(v_0,w_0):=\inf_{u\in\Gamma_1(v_0,w_0)}J_1(u).$$
Then, as was proved in \cite{LC}, $c_1(v_0,w_0)$ is attained
and $$\MM_1(v_0,w_0):=\{u\in\Gamma_1(v_0,w_0)\,|\, J_1(u)=c_1(v_0,w_0)\}$$ is an ordered set and consists of heteroclinic minimal and Birkhoff solutions of \eqref{eq:PDE}.
Moreover, we have
\begin{lem}[{cf. \cite[Proposition 2.13]{LC1}}]\label{lem:6553}
Suppose \eqref{eq:*0} holds and $u\in\hat{\Gamma}_1(v_0,w_0)$ with $J_1(u)<\infty$.
If $u$ satisfies \eqref{eq:PDE}
for $\BD{i}_1\geq R$ (resp. $\BD{i}_1\leq -R$), then
$|(u-\phi)(\bd{T}_i)|\to 0$ as $i\to \infty$ (resp. $|(u-\phi)(\bd{T}_i)|\to 0$ as $i\to -\infty)$,
where $R\in\N$ and $\phi=v_0$ or $w_0$.
\end{lem}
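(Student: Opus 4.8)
The plan is to argue by an asymptotic analysis of the "tail" of $u$ together with a compactness argument in the spirit of the convergence results already established for $J_1$. I will treat the case $\bd{i}_1 \geq R$; the other case is symmetric under the reflection $i\mapsto -i$. Since $u\in\hat\Gamma_1(v_0,w_0)$ we already have $v_0\leq u\leq w_0$, hence $u$ has bounded action with a constant depending only on $v_0,w_0$, and Lemma~\ref{lem:lc16321} applies to all partial sums $J_{1;p,q}$. The first step is to extract, for a sequence $i_m\to\infty$, a pointwise limit of the shifted functions $\tau_{-i_m}^1 u$: by Lemma~\ref{lem:subsequ} (applied with $v=v_0$, $w=w_0$, which are themselves translation-invariant in $\bd{i}_1$ by their $\R^{\Z\times\Z^{n-1}/\Z^{n-1}}$ periodicity, so $v_0\leq \tau_{-i_m}^1 u\leq w_0$ for every $m$) we get a subsequence converging pointwise to some $\bar u\in\hat\Gamma_1(v_0,w_0)$.

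Next I would show $\bar u$ is a minimal solution of \eqref{eq:PDE} that is periodic in $\bd{i}_1$, in fact a translate of an element of $\MM_0$. The key input is $J_1(u)<\infty$: writing $J_1(u)=\sum_{i\in\Z}[J_0(\tau_{-i}^1 u)-c_0]$ (the series converges by \eqref{eq:pppp5} and the fact that each summand is $\geq 0$ once one works inside $\Gamma_1$, or more carefully, by using \eqref{eq:haha1} together with the lower bound $J_{1;p,q}\geq -K_1$), the summands $[J_0(\tau_{-i}^1 u)-c_0]$ must tend to $0$ as $i\to\infty$. Hence in the limit $J_0(\tau_{-i}^1\bar u)=c_0$ for every $i$, so $\tau_{-i}^1\bar u\in\MM_0$; since $\MM_0=\MM_0^{\bd p}$ for all $\bd p$ by Lemma~\ref{prop:2.2369852145}, these translates must stabilize and $\bar u$ is forced to be a single element $\phi\in\MM_0$, with $v_0\leq\phi\leq w_0$. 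Because $v_0,w_0$ are adjacent in $\MM_0$ by \eqref{eq:*0}, $\phi=v_0$ or $\phi=w_0$. The hypothesis that $u$ solves \eqref{eq:PDE} for $\bd{i}_1\geq R$ is what rules out the "wrong" endpoint and is used to pin down which of $v_0,w_0$ occurs, via Lemma~\ref{lem:2.5miao}: on the region where $u$ solves the equation one can compare $u$ with $v_0$ and with $w_0$ using the strong-ordering (no-intersection) property.

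The remaining, and genuinely delicate, step is to upgrade convergence along a subsequence to convergence of the full tail, i.e. $|(u-\phi)(\bd T_i)|\to 0$ as $i\to\infty$, and to identify $\phi$ uniquely. The main obstacle is precisely that a priori different subsequences $i_m\to\infty$ could produce different limits ($v_0$ along one, $w_0$ along another); excluding this requires exploiting the variational structure. I would do this by a "no flat tail splitting" argument: if the tail of $u$ accumulated on both $v_0$ and $w_0$, one could cut $u$ at a large index and paste in $v_0$ (or $w_0$) using the comparison Lemma~\ref{lem:2.6miao}, producing a competitor with strictly smaller $J_1$, contradicting $J_1(u)<\infty$ together with the minimality content built into $\Gamma_1$; alternatively, once $u$ solves \eqref{eq:PDE} for $\bd{i}_1\geq R$, one applies Lemma~\ref{lem:2.5miao} between $u$ and the limit $\phi$ on the half-space $\{\bd{i}_1\geq R\}$ to get $u<\phi$ or $u=\phi$ there, and then the energy finiteness forces the gap $|(u-\phi)(\bd T_i)|$ to be summable, hence to vanish. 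I expect the cleanest route is the latter: reduce everything to the half-space $\{\bd i_1\ge R\}$, observe $u$ is there an entire solution, run the subsequential-limit argument to get $\phi\in\{v_0,w_0\}$, invoke Lemma~\ref{lem:2.5miao} to strictly order $u$ against $\phi$, and close with the summability of $[J_0(\tau_{-i}^1u)-c_0]$ via Lemma~\ref{lem:lc16321} to conclude the asymptotic decay.
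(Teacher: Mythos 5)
The paper itself does not prove this lemma: it is stated as a citation to \cite[Proposition 2.13]{LC1}, so there is no ``paper's own proof'' here to compare against. Your outline captures the right high-level shape (extract a pointwise subsequential limit $\bar u$ of the shifts $\tau_{-i_m}^1 u$, identify $\bar u$ with $v_0$ or $w_0$ using adjacency, then upgrade to full-tail convergence), but several of the key inferences as you have written them are false or unjustified.

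First, the step ``$[J_0(\tau_{-i}^1 u)-c_0]\to 0$ as $i\to\infty$'' does not follow from what you cite. You acknowledge that the summands are only known to be nonnegative when $u\in\Gamma_1(v_0,w_0)$; here $u$ is only in $\hat\Gamma_1(v_0,w_0)$, and indeed for non-constant configurations $J_0(v)=s(v|_{B_{\bd 0}^r})$ can be strictly below $c_0$ (this is exactly why the Moser lower bound $J_{1;p,q}\geq -K_1$ is needed). The alternative you offer --- bounded partial sums via \eqref{eq:pppp5} --- does not force the terms of a series with mixed signs to vanish, so this step has a genuine gap. Second, even granting $J_0(\tau_{-i}^1\bar u)=c_0$ for all $i$, you cannot conclude $\tau_{-i}^1\bar u\in\MM_0$: the set $\MM_0$ is by definition contained in $\R^{\Z^n/\Z^n}$, which consists of constants, and $\bar u$ is only known to lie in $\R^{\Z\times\Z^{n-1}/\Z^{n-1}}$; moreover $J_0(v)$ depends only on $v|_{B_{\bd 0}^r}$, and $\inf_{w\in\R^{B_{\bd 0}^r}}s(w)$ can be strictly smaller than $c_0$, so $J_0(v)=c_0$ on its own carries no minimality information. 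What one should instead extract from $J_1(u)<\infty$ and the half-space PDE is that $\bar u$ is a \emph{solution} of \eqref{eq:PDE} on all of $\Z^n$ with zero excess energy over every window, and then use the variational structure (not the pointwise identity $J_0=c_0$) to place $\bar u$ in $\{v_0,w_0\}$. Third, your invocation of Lemma~\ref{lem:2.5miao} ``on the half-space $\{\bd i_1\ge R\}$'' is not licensed: that lemma requires both functions to be global solutions of \eqref{eq:PDE}, and you have not established any ordering between $u$ and the candidate endpoint $\phi$ on that half-space to begin with. Finally, a reading note: the lemma's conclusion is that $\phi$ is one of $v_0,w_0$, not that the half-space hypothesis singles out a ``correct'' endpoint; the PDE assumption on $\{\bd i_1\ge R\}$ is what makes the \emph{entire tail} converge (rather than only a subsequence), which is the delicate point you correctly flag but do not actually resolve.
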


Similar to Section \ref{sec:4524573},
varying the periods of function in $\Gamma_1(v_0,w_0)$ cannot produce more minimal and Birkhoff solution.
To see this,
for $\bd{q}=(\bd{q}_2,\cdots, \bd{q}_{n})\in\N^{n-1}$ let $$\T_{1}^{\bd{q}}:=\{0,\cdots, \bd{q}_2 -1\}\times\cdots\times \{0,\cdots, \bd{q}_{n}-1 \}.$$
Set
\begin{equation}\label{eq:gamma186541100}
\begin{split}
\Gamma_{1}^{\bd{q}}(v_0,w_0)
:=\{u\in\R^{\Z\times \Z^{n-1} /(\bd{q}\Z^{n-1})}\,|\, &v_0\leq u\leq w_0, \\
&\lim_{i\to - \infty}\sum_{\bd{j}\in \{i\}\times\T_{1}^{\bd{q}}}|(u-v_0)(\bd{j})|=0, \\
&\lim_{i\to \infty}\sum_{\bd{j}\in \{i\}\times\T_{1}^{\bd{q}}}|(u-w_0)(\bd{j})|=0
\}.
\end{split}
\end{equation}
For $u\in \Gamma_{1}^{\bd{q}}(v_0,w_0)$, define
\begin{equation*}
  J_{1;p,q}^{\bd{q}}(u):=\sum_{i=p}^{q}J_{1,i}^{\bd{q}}(u):=\sum_{i=p}^{q} \sum_{\bd{j}\in \{i\}\times \T_{1}^{\bd{q}} }\left[S_{\bd{j}}(u)-c_0 \right],
\end{equation*}
and
\begin{equation*}
J_{1}^{\bd{q}}(u):=\liminf_{p\to -\infty \atop q\to \infty}J_{1;p,q}^{\bd{q}}(u).
\end{equation*}
Similar to \eqref{eq:haha1} $J_{1}^{\bd{q}}(u)$ is well-defined and it satisfies
\begin{lem}[{cf. \cite[Proposition 3.4]{LC}}]\label{prop:8554556}
For $u\in \Gamma_{1}^{\bd{q}}(v_0,w_0)$, if $J_{1}^{\bd{q}}(u)<\infty$,
\begin{equation*}
J_{1}^{\bd{q}}(u)=\lim_{p\to -\infty \atop q\to \infty}J_{1;p,q}^{\bd{q}}(u), \quad \textrm{i.e., }J_{1}^{\bd{q}}(u)=\sum_{i\in \Z}J_{1,i}^{\bd{q}}(u).
\end{equation*}
\end{lem}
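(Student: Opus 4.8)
The plan is to follow the scheme of \cite[Proposition 3.4]{LC}: reduce the statement to the convergence of the two one-sided series $\sum_{i\geq 1}J_{1,i}^{\bd{q}}(u)$ and $\sum_{i\leq 0}J_{1,i}^{\bd{q}}(u)$, whose sum must then coincide with both the $\liminf$ and the $\lim$ of $J_{1;p,q}^{\bd{q}}(u)$, hence with $J_1^{\bd{q}}(u)$. Abbreviate $a_i:=J_{1,i}^{\bd{q}}(u)$ and $\delta_i^{\phi}:=\sum_{\bd{j}\in\{i\}\times\T_1^{\bd{q}}}|(u-\phi)(\bd{j})|$ for $\phi\in\{v_0,w_0\}$. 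First I would record three elementary facts. Since $v_0\leq u\leq w_0$ with $v_0,w_0$ $1$-periodic, $u$ has bounded action; since $v_0,w_0\in\MM_0$ are $1$-periodic, $S_{\bd{j}}(v_0)=S_{\bd{j}}(w_0)=c_0$ for every $\bd{j}$, so $J_{1,i}^{\bd{q}}(v_0)=J_{1,i}^{\bd{q}}(w_0)=0$ for all $i$; and, by the same argument that makes $J_1^{\bd{q}}$ well defined (the $\bd{q}$-periodic version of \cite[Proposition 3.2]{LC} and \eqref{eq:pppp5}), one has uniform bounds $-K\leq J_{1;p,q}^{\bd{q}}(u)\leq J_1^{\bd{q}}(u)+2K$ for all $p\leq q$, so every partial sum $\sum_{i=p}^q a_i$ lies in a fixed bounded interval.

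Next I would establish decay of the individual terms together with the key one-sided estimate. Applying Lemma \ref{lem:lc16321} to $u$ and $w_0$ on the finite set $B=\{i\}\times\T_1^{\bd{q}}\subset\Z^n$ and folding the $\bd{q}$-periodicity of $u-w_0$ (which replaces $\bar B$ by $\{i-r,\dots,i+r\}\times\T_1^{\bd{q}}$ at the cost of a constant $C_{r,\bd{q}}$), one gets $|a_i|=|J_{1,i}^{\bd{q}}(u)-J_{1,i}^{\bd{q}}(w_0)|\leq LC_{r,\bd{q}}\sum_{k=i-r}^{i+r}\delta_k^{w_0}$, whence $a_i\to0$ as $i\to+\infty$ by \eqref{eq:gamma186541100}, and symmetrically $a_i\to0$ as $i\to-\infty$ using $v_0$. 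The main point is: \emph{for every $\varepsilon>0$ there is $N$ such that $\sum_{i=q'+1}^{q''}a_i\geq-\varepsilon$ for all $q''>q'\geq N$} (and symmetrically $\sum_{i=p''}^{p'-1}a_i\geq-\varepsilon$ for $p''<p'\leq -N$). To prove this, assume $q''-q'\geq 4r$ (the short case is immediate from $a_i\to0$), set $\bd{p}:=(q''-q'+4r,\bd{q}_2,\dots,\bd{q}_n)\in\N^n$, and let $U\in\Gamma_0^{\bd{p}}$ be the $\bd{p}$-periodic function whose values on one period of levels $\{q'+1,\dots,q'+\bd{p}_1\}$ are those of $u$ on $\{q'+1,\dots,q''\}$ and those of $w_0$ on $\{q''+1,\dots,q'+\bd{p}_1\}$. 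By Lemma \ref{prop:2.2369852145}, $J_0^{\bd{p}}(U)\geq c_0^{\bd{p}}=\bd{p}_1|\T_1^{\bd{q}}|c_0$; since (by independence of the fundamental domain and the definition of $J_{1,i}^{\bd{q}}$) $J_0^{\bd{p}}(U)=\bd{p}_1|\T_1^{\bd{q}}|c_0+\sum_{i=q'+1}^{q'+\bd{p}_1}J_{1,i}^{\bd{q}}(U)$, we obtain $\sum_{i=q'+1}^{q'+\bd{p}_1}J_{1,i}^{\bd{q}}(U)\geq0$. Now $J_{1,i}^{\bd{q}}(U)=a_i$ when $\{i-r,\dots,i+r\}\subset\{q'+1,\dots,q''\}$, $J_{1,i}^{\bd{q}}(U)=0$ when $\{i-r,\dots,i+r\}$ lies in the $w_0$-part, and on the at most $4r$ ``interface'' levels near $q'$ and near $q''$ Lemma \ref{lem:lc16321} (again comparing $U$ with $w_0$) together with $\delta_k^{w_0}\to0$ bounds the sum of $|J_{1,i}^{\bd{q}}(U)|$ there by a quantity $E(q')+E(q'')$ with $E(m)\to0$ as $m\to\infty$. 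Hence $\sum_{q'+1+r\leq i\leq q''-r}a_i\geq-E(q')-E(q'')$, and adding back the at most $2r$ endpoint terms (each $o(1)$ by the decay of $a_i$) yields the claimed inequality once $N$ is large enough. The left-hand estimate is verbatim with $v_0$ in place of $w_0$.

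Finally I would conclude by a soft monotonicity argument. Put $S_q:=\sum_{i=1}^q a_i$; it is bounded, and the estimate above says $S_{q''}\geq S_{q'}-\varepsilon$ whenever $q''>q'\geq N(\varepsilon)$. Choosing $q'\geq N(\varepsilon)$ with $S_{q'}$ within $\varepsilon$ of $\limsup_q S_q$ forces $\liminf_q S_q\geq\limsup_q S_q-2\varepsilon$; letting $\varepsilon\downarrow0$ shows that $S_q$ converges, i.e. $\sum_{i\geq1}a_i$ exists. The same argument applied to $T_p:=\sum_{i=p}^0 a_i$, using the left estimate, shows $\sum_{i\leq0}a_i$ exists; hence $J_{1;p,q}^{\bd{q}}(u)=T_p+S_q$ converges as $p\to-\infty$, $q\to\infty$, and its limit equals $\liminf_{p,q}J_{1;p,q}^{\bd{q}}(u)=J_1^{\bd{q}}(u)$, which is the assertion. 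I expect the only real obstacle to be the bookkeeping in the middle step — making sure the comparison competitor $U$ genuinely belongs to $\Gamma_0^{\bd{p}}$, that $J_0^{\bd{p}}(U)$ splits cleanly into the $u$-part, the (vanishing) $w_0$-part, and $O(r)$ interface levels, and that Lemma \ref{lem:lc16321} is invoked with the right folded closure so that the interface cost is governed purely by the tail quantities $\delta_k^{w_0}$; everything else is routine.
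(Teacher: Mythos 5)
The paper does not actually supply a proof of Lemma~\ref{prop:8554556}; it simply cites \cite[Proposition~3.4]{LC} and asserts the $\bd{q}$-periodic variant (cf.\ also Remark~\ref{rem:000123123}). Your reconstruction is correct and follows exactly the Moser-comparison scheme used there: truncate $u$ to a $\bd{p}$-periodic competitor glued to $w_0$ (resp.\ $v_0$), invoke $J_{0}^{\bd{p}}\geq c_{0}^{\bd{p}}$ from Lemma~\ref{prop:2.2369852145} after splitting over the shifted fundamental domain, control the $O(r)$ interface levels via Lemma~\ref{lem:lc16321} and the tail decay from \eqref{eq:gamma186541100}, and convert the resulting one-sided estimate on $\sum_{i=q'+1}^{q''}a_i$ into convergence of the two half-line series — and the bookkeeping (wrap-around of the $w_0$ block, folding $\bar{B}$ by $\bd{q}$-periodicity, the $4r$ interface count) all checks out.
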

\begin{rem}\label{rem:000123123}
Suppose that $v,w\in \MM_0$ satisfy $v\leq v_0 <w_0\leq w$ and $v,w$
may be not adjacent.
A careful reading of the proof of \cite[Proposition 3.4]{LC} tells us that
for %
\begin{equation*}%
\begin{split}
u\in\{u\in\R^{\Z\times \Z^{n-1} /(\bd{q}\Z^{n-1})}\,|\, &v\leq u\leq w, \\
&\lim_{i\to - \infty}\sum_{\bd{j}\in \{i\}\times\T_{1}^{\bd{q}}}|(u-v_0)(\bd{j})|=0, \\
&\lim_{i\to \infty}\sum_{\bd{j}\in \{i\}\times\T_{1}^{\bd{q}}}|(u-w_0)(\bd{j})|=0
\},
\end{split}
\end{equation*}
$J_{1}^{\bd{q}}(u)$ is well-defined and Lemma \ref{prop:8554556} holds.
\end{rem}

Similar to Lemma \ref{prop:2.2369852145}, we have
\begin{lem}[{cf. \cite[Proposition 3.20]{LC}}]\label{prop:2.235655145}
Let $\BD{q}=(\BD{q}_2,\cdots, \BD{q}_n)\in \N ^{n-1}$ and $c_{1}^{\bd{q}}(v_0,w_0):=\inf_{u\in \Gamma_{1}^{\bd{q}}(v_0,w_0)}J_{1}^{\bd{q}}(u)$.
Then $ 
\MM_1 ^{\BD{q}}:=\{u\in \Gamma_1 ^{\BD{q}}(v_0,w_0)\,|\,J_{1}^{\bd{q}}(u)=c_1 ^{\BD{q}}(v_0,w_0)\}\neq \emptyset.
$ 
Moreover, $\MM_{1}^{\bd{q}}(v_0,w_0)=\MM_1(v_0,w_0)$, and $c_{1}^{\bd{q}}(v_0,w_0)=(\prod_{i=2}^n \BD{q}_i)c_1(v_0,w_0)$.
\end{lem}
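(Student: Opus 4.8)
The plan is to follow the scheme of Lemma~\ref{prop:2.2369852145}: first produce a minimizer by the direct method, and then run Moser's cut-and-paste argument --- now built on Lemma~\ref{lem:2.6miao} and Lemma~\ref{lem:2.5miao} --- to force every minimizer to be independent of the enlarged periods $\bd q$. For existence, a minimizing sequence in $\Gamma_{1}^{\bd q}(v_0,w_0)$ is squeezed between $v_0$ and $w_0$, hence pointwise precompact by Lemma~\ref{lem:subsequ}; passing to a pointwise limit $u$, the $\bd q$-analogue of the bound $J_{1;p,q}^{\bd q}\ge -K_1^{\bd q}$ together with lower semicontinuity gives $J_{1}^{\bd q}(u)\le c_{1}^{\bd q}(v_0,w_0)$, and the renormalization argument of \cite{LC} shows that the slice-$\ell^1$ tail conditions in \eqref{eq:gamma186541100} are inherited, so $\MM_{1}^{\bd q}(v_0,w_0)\neq\emptyset$. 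The same reasoning --- this is essentially the content of \cite[Proposition~3.20]{LC}, and replacing the single index by the window $\T_{1}^{\bd q}$ changes nothing in it --- shows that every $u\in\MM_{1}^{\bd q}(v_0,w_0)$ is minimal in the sense of Definition~\ref{defi:2.3mramor} and hence a solution of \eqref{eq:PDE}.

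Now fix $u\in\MM_{1}^{\bd q}(v_0,w_0)$, $k\in\{1,\dots,n\}$ and $j\in\Z$. Since $v_0,w_0\in\MM_0$ are $\Z^n$-periodic and $S_{\bd j}$ is translation covariant, $\tau_j^k u\in\Gamma_{1}^{\bd q}(v_0,w_0)$ with $J_{1}^{\bd q}(\tau_j^k u)=J_{1}^{\bd q}(u)=c_{1}^{\bd q}(v_0,w_0)$ --- for $k\ge 2$ because $\{i\}\times\T_{1}^{\bd q}$ is a full period in the $k$-th direction, and for $k=1$ because $J_{1}^{\bd q}$ sums over all of $\Z$ in the first coordinate. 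Moreover $\max(u,\tau_j^k u),\,\min(u,\tau_j^k u)\in\Gamma_{1}^{\bd q}(v_0,w_0)$, the tails surviving because $|\max(a,b)-c|,\,|\min(a,b)-c|\le|a-c|+|b-c|$. Summing Lemma~\ref{lem:2.6miao} over $\{p,\dots,q\}\times\T_{1}^{\bd q}$ and cancelling the $c_0$-terms bounds $J_{1;p,q}^{\bd q}(\max(u,\tau_j^k u))+J_{1;p,q}^{\bd q}(\min(u,\tau_j^k u))$ from above by $J_{1;p,q}^{\bd q}(u)+J_{1;p,q}^{\bd q}(\tau_j^k u)$; the $\bd q$-analogues of $J_{1;p,q}^{\bd q}\ge -K_1^{\bd q}$ and of \eqref{eq:pppp5} then keep the two left-hand terms bounded above uniformly in $p,q$, whence $J_{1}^{\bd q}(\max),\,J_{1}^{\bd q}(\min)<\infty$, and Lemma~\ref{prop:8554556} turns the relevant $\liminf$'s into honest limits, giving $J_{1}^{\bd q}(\max(u,\tau_j^k u))+J_{1}^{\bd q}(\min(u,\tau_j^k u))\le 2c_{1}^{\bd q}(v_0,w_0)$. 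As each term is $\ge c_{1}^{\bd q}(v_0,w_0)$, both $\max(u,\tau_j^k u)$ and $\min(u,\tau_j^k u)$ lie in $\MM_{1}^{\bd q}(v_0,w_0)$ and hence solve \eqref{eq:PDE}.

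Taking $j=1$ and $k\in\{2,\dots,n\}$: since $\max(u,\tau_1^k u)\ge u$ with both solving \eqref{eq:PDE}, Lemma~\ref{lem:2.5miao} forces $\tau_1^k u\le u$ or $\tau_1^k u>u$. If $\tau_1^k u>u$, then applying the order-preserving shift $\tau_1^k$ repeatedly and using that $u$ is $\bd q_k$-periodic in direction $k$ yields $u=\tau_{\bd q_k}^k u>\cdots>\tau_1^k u>u$, which is absurd; hence $\tau_1^k u\le u$, and the same iteration gives $u=\tau_{\bd q_k}^k u\le\cdots\le\tau_1^k u\le u$, so $\tau_1^k u=u$. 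Thus $u\in\R^{\Z\times\Z^{n-1}/\Z^{n-1}}$, its slice-$\ell^1$ tail conditions collapse to the pointwise ones defining $\Gamma_1(v_0,w_0)$, and on this class $J_{1}^{\bd q}=(\prod_{i=2}^{n}\bd q_i)\,J_1$; since $u$ minimizes $J_{1}^{\bd q}$ over $\Gamma_{1}^{\bd q}(v_0,w_0)\supseteq\Gamma_1(v_0,w_0)$, it minimizes $J_1$ over $\Gamma_1(v_0,w_0)$. Therefore $u\in\MM_1(v_0,w_0)$ and $c_{1}^{\bd q}(v_0,w_0)=(\prod_{i=2}^{n}\bd q_i)\,c_1(v_0,w_0)$; this value identity immediately yields the reverse inclusion $\MM_1(v_0,w_0)\subseteq\MM_{1}^{\bd q}(v_0,w_0)$, so the two sets coincide.

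The only genuinely non-routine part is the bookkeeping with the renormalized functional near $\bd i_1=\pm\infty$: making sure, in the existence step, that the slice-$\ell^1$ tail conditions pass to the pointwise limit (and that a constrained minimizer is actually minimal, hence solves \eqref{eq:PDE}), and, in the second step, that the window inequality coming from Lemma~\ref{lem:2.6miao} survives the double limit $p\to-\infty$, $q\to\infty$ without the various $\liminf$'s being realized along different subsequences. The uniform bounds furnished by the $\bd q$-analogues of $J_{1;p,q}^{\bd q}\ge -K_1^{\bd q}$ and \eqref{eq:pppp5}, together with Lemma~\ref{prop:8554556}, are precisely what handle the latter, exactly as in \cite[Proposition~3.20]{LC}; with those in place, the rest is Moser's argument verbatim.
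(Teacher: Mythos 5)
Your proposal is correct and follows the standard Moser-style cut-and-paste argument: the direct method (squeezing a minimizing sequence between $v_0$ and $w_0$, extracting a pointwise limit via Lemma \ref{lem:subsequ}, and checking the tail conditions and lower semicontinuity as in \cite{LC}) gives existence, and then Lemma \ref{lem:2.6miao} combined with the strong comparison principle (Lemma \ref{lem:2.5miao}) and the $\bd{q}_k$-periodicity forces every $\bd{q}$-periodic minimizer to be $1$-periodic in directions $2,\dots,n$, from which $\MM_1^{\bd q}=\MM_1$ and the scaling of $c_1^{\bd q}$ follow. The paper does not prove this lemma itself but defers to \cite[Proposition 3.20]{LC}; your reconstruction matches that argument's approach, and you correctly flag the two genuinely technical points (passage of the slice-$\ell^1$ tails to the pointwise limit, and promotion of a constrained minimizer to an actual solution of \eqref{eq:PDE}) as the parts that rely on the renormalization machinery of \cite{LC}.
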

In analogy with \eqref{eq:*0},
assume
\begin{equation}\label{eq:*1}
    \textrm{there are } v_1, w_1 \in \MM_1(v_0,w_0) \textrm{ with } v_1<w_1 \textrm{ such that $v_1, w_1$ are adjacent}\tag{$*_1$}.
\end{equation}
In Section \ref{sec:mountainpass} we shall establish the existence of periodic mountain pass solution in the gap of $v_0,w_0$ while
in Section \ref{sec:2006}, we shall construct heteroclinic mountain pass solution in the gap of $v_1,w_1$.

\section{Mountain pass solutions in the gap of $\MM_0$}\label{sec:mountainpass}

Assume that $s$ satisfies (S\ref{eq:S1})-(S\ref{eq:S4}) in this and the following two sections.
We establish periodic mountain pass solution of \eqref{eq:PDE} in this section.
Firstly we introduce the working space and the corresponding functional.
For $\bd{p}\in\N^n$, set
\begin{equation*}
  \Lambda_{0}^{\bd{p}}:=\left\{u\in\R^{\Z^{n}/(\bd{p}\Z^n)}\,\Big|\,\norm{u}^{2}_{\Lambda_{0}^{\bd{p}}}:=\sum_{\bd{j}\in \T_{0}^{\bd{p}}}|u(\bd{j})|^2 <\infty\right\}.
\end{equation*}
It is easy to see that $(\Lambda_{0}^{\bd{p}}, \norm{\cdot}_{\Lambda_{0}^{\bd{p}}})$ is a Banach space.
Define $J^{\bd{p}}_{0}$ as in Section \ref{sec:4524573} and assume that \eqref{eq:*0} holds.
For $u\in \Lambda_{0}^{\bd{p}}$,
set $I^{\bd{p}}_{0}(u):=J^{\bd{p}}_{0}(u+v_0)$.
Then since $s\in C^2(\R^{B_{\BD{0}}^{r}},\R)$, $I^{\bd{p}}_{0}\in C^1(\Lambda_{0}^{\bd{p}},\R)$ and
\begin{equation}\label{eq:derivative}
  \begin{split}
    (I_{0}^{\bd{p}})'(u)v =& \sum_{\bd{j}\in \T^{\bd{p}}_{0}}\sum_{\bd{k}:\norm{\bd{k}-\bd{j}}\leq r}\partial_{\bd{k}}S_{\bd{j}}(u+v_0)v(\bd{k}) \\
      =& \sum_{\bd{j}\in \T^{\bd{p}}_{0}}v(\bd{j})\sum_{\bd{k}:\norm{\bd{k}-\bd{j}}\leq r}\partial_{\bd{k}}S_{\bd{j}}(u+v_0),
  \end{split}
\end{equation}
where $(I_{0}^{\bd{p}})'$ is the Fr\'echet derivative
of $I_{0}^{\bd{p}}$.
If $(I_{0}^{\bd{p}})'(u)=0$, then $$\sum_{\bd{k}:\norm{\bd{k}-\bd{j}}\leq r}\partial_{\bd{k}}S_{\bd{j}}(u+v_0)=0$$ hold for all $\bd{j}\in \T^{\bd{p}}_{0}$.
Hence by the
periodicities of $u$ and $v_0$, $u+v_0$ is a solution of \eqref{eq:PDE}.

\subsection{Periodic mountain pass solution}
\

Consider the semiflow $\Phi_{t}^{0} : \Lambda_{0}^{\bd{p}} \rightarrow \Lambda_{0}^{\bd{p}}$,
which is defined by 
\begin{equation}\label{eq:dela7}
  \left\{
    \begin{array}{ll}
      -\partial_t \Phi_{t}^{0} (u)(\bd{i}) & =\sum_{\bd{j}: \norm{\bd{j}-\bd{i}}\leq r}\partial_{\bd{i}}S_{\bd{j}}(\Phi_{t}^{0} (u)+v_0), \quad \quad \textrm{for }t>0,\\
      \Phi_{0}^{0} (u)(\bd{i}) & =u(\bd{i}).
    \end{array}
  \right.
\end{equation}
Set $W(u)(\bd{i})=\sum_{\bd{j}: \norm{\bd{j}-\bd{i}}\leq r}\partial_{\bd{i}}S_{\bd{j}}(u+v_0)$, then $W(u)\in \Lambda_{0}^{\bd{p}}$ for any $u\in \Lambda_{0}^{\bd{p}}$.
For $u, v\in \Lambda_{0}^{\bd{p}}$,
\begin{equation}\label{eq:lipschitz}
  \begin{split}
     &\norm{W(u)-W(v)}_{\Lambda_{0}^{\bd{p}}}^{2}\\
     =& \sum_{\bd{i}\in \T^{\bd{p}}_{0}}|W(u)(\bd{i})-W(v)(\bd{i})|^{2} \\
      = &  \sum_{\bd{i}\in \T^{\bd{p}}_{0}} \Big|\sum_{\bd{j}:\norm{\bd{j}-\bd{i}}\leq r}[\partial_{\bd{i}}S_{\bd{j}}(u+v_0)-\partial_{\bd{i}}S_{\bd{j}}(v+v_0)]\Big|^{2}\\
      = &  \sum_{\bd{i}\in \T^{\bd{p}}_{0}} \left|\sum_{\bd{j}:\norm{\bd{j}-\bd{i}}\leq r}\left[\int_{0}^{1}\frac{\ud}{\ud t} \partial_{\bd{i}}S_{\bd{j}}(v+t(u-v)+v_0)\ud t\right]\right|^{2}\\
      =& \sum_{\bd{i}\in \T^{\bd{p}}_{0}}\left|
      \sum_{\bd{j}:\norm{\bd{j}-\bd{i}}\leq r \atop \bd{l}:\norm{\bd{l}-\bd{j}}\leq r}
      \int_{0}^{1} \partial_{\bd{i},\bd{l}}S_{\bd{j}}(v+t(u-v)+v_0)\ud t \cdot[u(\bd{l})-v(\bd{l})]
      \right|^{2}\\
      \leq & \sum_{\bd{i}\in \T^{\bd{p}}_{0}}
      \sum_{\bd{j}:\norm{\bd{j}-\bd{i}}\leq r \atop \bd{l}:\norm{\bd{l}-\bd{j}}\leq r}
      \left(
      \int_{0}^{1} \partial_{\bd{i},\bd{l}}S_{\bd{j}}(v+t(u-v)+v_0)\ud t
      \right)^2
      \sum_{\bd{j}:\norm{\bd{j}-\bd{i}}\leq r \atop \bd{l}:\norm{\bd{l}-\bd{j}}\leq r}
      [u(\bd{l})-v(\bd{l})]^2\\
      \leq & C^2 \cdot C(r)\sum_{\bd{i}\in \T^{\bd{p}}_{0}}\sum_{\bd{j}:\norm{\bd{j}-\bd{i}}\leq r \atop \bd{l}:\norm{\bd{l}-\bd{j}}\leq r}
      [u(\bd{l})-v(\bd{l})]^2\\
      \leq & C^2 \cdot C(r) \cdot C_1(r) \norm{u-v}_{\Lambda_{0}^{\bd{p}}}^{2}
       \end{split}
\end{equation}
where
$C$ is the constant in (S\ref{eq:S4})
and
$C(r),C_1(r)$ are constants depending only on $r$.
By Cauchy-Lipschitz-Picard Theorem (please see e.g., \cite[Theorem 7.3]{Brezis}),
$\Phi_t ^{0}$ is well-defined and is $C^1$ in $t$.
For $\Phi_t ^{0}$, we have the following comparison result.

\begin{prop}\label{prop:comparison}
Assume $u_1,u_2\in \Lambda_{0}^{\bd{p}}$. If $u_1\leq u_2$ and $u_1\neq u_2$, then $\Phi_{t}^{0}(u_1)< \Phi_{t}^{0}(u_2)$
for all $t>0$.
\end{prop}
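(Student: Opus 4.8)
The plan is to prove the strong comparison principle for the semiflow $\Phi_t^0$ by a two-stage argument: first a weak comparison ($u_1 \le u_2 \Rightarrow \Phi_t^0(u_1) \le \Phi_t^0(u_2)$), then an upgrade to strict inequality using the sign condition (S\ref{eq:S3}). The key structural fact I would exploit is that the right-hand side of \eqref{eq:dela7} defines a \emph{cooperative} (quasimonotone) system of ODEs on the finite-dimensional space $\Lambda_0^{\bd{p}} \cong \R^{\T_0^{\bd{p}}}$: by (S\ref{eq:S3}), $\partial_{\bd{i},\bd{k}} S_{\bd{j}} \le 0$ whenever $\bd{i} \ne \bd{k}$, so $\partial_{u(\bd{k})}\big[-W(u)(\bd{i})\big] = -\sum_{\bd{j}} \partial_{\bd{i},\bd{k}} S_{\bd{j}}(u+v_0) \ge 0$ for $\bd{k} \ne \bd{i}$. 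Hence the vector field $-W$ has nonnegative off-diagonal partial derivatives, which is exactly the hypothesis of Kamke's theorem / Müller's theorem on monotone flows.

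The steps, in order, are as follows. First I would record the cooperativity computation above, using (S\ref{eq:S3}) and the chain rule expansion of $\partial_{\bd{i}} S_{\bd{j}}$ already displayed in \eqref{eq:lipschitz}. Second, I would establish the weak comparison: suppose $u_1 \le u_2$ and let $w(t) = \Phi_t^0(u_2) - \Phi_t^0(u_1)$; the standard trick is to add a small constant drift, i.e.\ consider $u_2^\varepsilon$ with $\Phi_t^0$ replaced by a perturbed flow whose velocity is shifted by $\varepsilon(1,\dots,1)$, show its trajectory stays strictly above that of $u_1$ (if the difference first touches zero at some coordinate $\bd{i}_0$ at time $t_0$, then at that coordinate $\dot w_{\bd{i}_0}(t_0) \ge \varepsilon > 0$ by cooperativity plus periodicity of $1_{B_0^r}$ under $s$, contradicting a first-touching minimum), then let $\varepsilon \to 0$; local Lipschitz continuity of $W$ (from \eqref{eq:lipschitz}) and continuous dependence on parameters give $\Phi_t^0(u_1) \le \Phi_t^0(u_2)$. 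Third, for the strict conclusion, set $w(t) = \Phi_t^0(u_2) - \Phi_t^0(u_1) \ge 0$ with $w(0) \ne 0$, and argue: the linear variational inequality
\begin{equation*}
  \dot w(\bd{i})(t) = -\big[W(\Phi_t^0(u_2)+v_0)(\bd{i}) - W(\Phi_t^0(u_1)+v_0)(\bd{i})\big] = \sum_{\bd{k}} a_{\bd{i}\bd{k}}(t)\, w(\bd{k})(t),
\end{equation*}
where $a_{\bd{i}\bd{k}}(t) = -\int_0^1 \sum_{\bd{j}} \partial_{\bd{i},\bd{k}} S_{\bd{j}}(\cdots)\,\ud t$ satisfies $a_{\bd{i}\bd{k}}(t) \ge 0$ for $\bd{k} \ne \bd{i}$, and moreover the ``irreducibility'' holds because of the second half of (S\ref{eq:S3}) ($\partial_{\bd{0},\bd{j}} s < 0$ for $\norm{\bd{j}}=1$), so for neighboring sites $\norm{\bd{k}-\bd{i}}=1$ we get $a_{\bd{i}\bd{k}}(t) > 0$. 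Since the interaction graph on $\Z^n/(\bd{p}\Z^n)$ with edges between nearest neighbors is connected, a Grönwall/integrating-factor argument (or the strong maximum principle for cooperative irreducible linear systems) forces $w(t) > 0$ in every coordinate for all $t > 0$ once $w(0) \ne 0$.

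The main obstacle I anticipate is the strict-positivity (irreducibility) step: one must argue that a zero of some coordinate $w_{\bd{i}_0}(t_0)$ propagates, via the nearest-neighbor coupling with strictly negative second derivative $\partial_{\bd{0},\bd{j}}s < 0$, to force $w_{\bd{k}}(t_0) = 0$ for all neighbors $\bd{k}$, and then by connectedness $w(t_0) \equiv 0$; combined with uniqueness for the backward ODE this would give $w \equiv 0$, contradicting $w(0) \ne 0$. Making this rigorous requires care because $a_{\bd{i}\bd{k}}(t)$ is only measurable-in-$t$ a priori — though here it is continuous since $\Phi_t^0$ is $C^1$ in $t$ and $s \in C^2$ — and one must handle the ``first touching time'' $t_0 = \inf\{t : w(t) \not> 0\}$ correctly. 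I would phrase the argument as: if $t_0 > 0$ is the first time some coordinate vanishes, differentiate $t \mapsto w_{\bd{i}_0}(t)$ at $t_0$, use $w_{\bd{i}_0}(t_0) = 0$, $w_{\bd{i}_0} > 0$ on $(0,t_0)$ so $\dot w_{\bd{i}_0}(t_0) \le 0$, against $\dot w_{\bd{i}_0}(t_0) = \sum_{\bd{k}\ne\bd{i}_0} a_{\bd{i}_0\bd{k}}(t_0) w_{\bd{k}}(t_0) \ge 0$ with equality only if all neighboring $w_{\bd{k}}(t_0) = 0$; iterating over the connected graph yields $w(t_0) \equiv 0$, hence by uniqueness (the flow is invertible in $t$) $w(0) \equiv 0$, a contradiction. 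Everything else — well-posedness of $\Phi_t^0$, the $C^1$ dependence — is already supplied by \eqref{eq:lipschitz} and the cited Cauchy--Lipschitz--Picard theorem.
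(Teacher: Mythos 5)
Your proposal is correct in substance but takes a genuinely different route from the paper's. The paper's proof sets $v(t)=\Phi^0_t(u_2)-\Phi^0_t(u_1)$, derives the linear variational equation $\dot v = H(t)v$, uses (S\ref{eq:S3})--(S\ref{eq:S4}) to find $M>0$ so that $\tilde H(t)=H(t)+M\cdot\mathrm{Id}$ is a \emph{positive} operator, passes to $w(t)=e^{Mt}v(t)$ solving $\dot w = \tilde H(t)w$, and then writes out the Picard series $w(t)=\sum_{n\geq 0}\tilde H^{(n)}(t)w(0)$ term by term: positivity of every $\tilde H^{(n)}(t)$ gives $w\geq 0$, and retaining only the $n=1$ term together with the strict inequality $\partial_{\bd{i},\bd{k}}S_{\bd i}<0$ for $\norm{\bd i-\bd k}=1$ yields a quantitative lower bound $w(t)(\bd i)>0$ for nearest neighbours of any site where $w(0)>0$, which is then propagated by induction along a nearest-neighbour chain. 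Your argument instead packages the same sign structure as \emph{cooperativity} (Kamke/M\"uller) of the vector field $-W$, separately proves weak comparison by the $\varepsilon$-drift trick, and upgrades to strict positivity by a Gr\"onwall/strong-maximum-principle argument on the irreducible linear variational system. Both proofs hinge on exactly the same three inputs — the sign of the off-diagonal second derivatives from (S\ref{eq:S3}), the uniform bound (S\ref{eq:S4}), and the strict part $\partial_{\bd 0,\bd j}s<0$ for nearest neighbours. What the paper's Picard route buys is a single self-contained computation that gives weak and strict comparison simultaneously, with an explicit lower bound; what yours buys is modularity and appeal to standard theory of monotone flows. One caution on the way you phrased the strict step: as written, the ``first time $t_0>0$ some coordinate vanishes'' argument presupposes $w>0$ on $(0,t_0)$, which fails if $w(0)$ already has zero coordinates (as it generally will when $u_1\leq u_2$, $u_1\neq u_2$ but not $u_1<u_2$); the Gr\"onwall/integrating-factor version you also mention — $w_{\bd i}(t)e^{-\int_0^t a_{\bd i\bd i}}$ nondecreasing, so $w_{\bd i}(t^*)=0$ forces $w_{\bd i}\equiv 0$ on $[0,t^*]$ and hence, by the strict nearest-neighbour coupling and connectedness, $w\equiv 0$ on $[0,t^*]$ — is the version that closes this without needing an initial interval of strict positivity, and should be the one you actually run.
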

The proof of Proposition \ref{prop:comparison} follows from \cite[Theorem 6.2]{mramor} with slight modifications.
For the reader's convenience, we provide the proof of Proposition \ref{prop:comparison} in Appendix \ref{sec:app1}.
Result similar to Proposition \ref{prop:comparison} also appears in \cite{dev2007}.
In \cite{dev2007}, one need a ``transitive'' condition (\cite[p. 2414, line 7]{dev2007}).
In our settings, (S\ref{eq:S3}) ensures this condition.

As in \cite{Rabi2007},
we choose a subset of $\Lambda_{0}^{\bd{p}}$ to prove the deformation lemma.
Set
\begin{equation*}
\MG^{\bd{p}}_{0} =\left\{u \in \Lambda_{0}^{\bd{p}} \, |\, 0\leq u \leq w_{0}-v_0 \right\}.
\end{equation*}
It is easy to see that $\MG^{\bd{p}}_{0}$ is a compact set with respect to the norm $\norm{\cdot}_{\Lambda_{0}^{\bd{p}}}$,
as shown in the following proposition.
\begin{prop}\label{prop:compact}
$\MG^{\bd{p}}_{0}$ is compact with respect to the norm $\norm{\cdot}_{\Lambda_{0}^{\bd{p}}}$.
\end{prop}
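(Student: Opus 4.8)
The plan is to show that $\MG^{\bd{p}}_{0}$ is sequentially compact in the metric induced by $\norm{\cdot}_{\Lambda_{0}^{\bd{p}}}$; since $\Lambda_{0}^{\bd{p}}$ is a finite-dimensional-type space in disguise (functions on $\Z^n$ that are $\bd{p}$-periodic are determined by their finitely many values on $\T_{0}^{\bd{p}}$), this is really just the Heine--Borel property after identifying $\MG^{\bd{p}}_{0}$ with a bounded closed subset of $\R^{\T_{0}^{\bd{p}}}$. Concretely, first I would observe that the map $u\mapsto (u(\bd{j}))_{\bd{j}\in\T_{0}^{\bd{p}}}$ is a linear isometry of $(\Lambda_{0}^{\bd{p}},\norm{\cdot}_{\Lambda_{0}^{\bd{p}}})$ onto $(\R^{|\T_{0}^{\bd{p}}|},|\cdot|_2)$, by the very definition of the norm.

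Next I would verify that the image of $\MG^{\bd{p}}_{0}$ under this identification is bounded and closed. Boundedness is immediate: if $0\leq u\leq w_0-v_0$ then $|u(\bd{j})|\leq (w_0-v_0)(\bd{j})$ for every $\bd{j}\in\T_{0}^{\bd{p}}$, so $\norm{u}_{\Lambda_{0}^{\bd{p}}}^2\leq\sum_{\bd{j}\in\T_{0}^{\bd{p}}}|(w_0-v_0)(\bd{j})|^2=\norm{w_0-v_0}_{\Lambda_{0}^{\bd{p}}}^2$, a finite constant. For closedness, suppose $u_n\in\MG^{\bd{p}}_{0}$ and $u_n\to u$ in $\Lambda_{0}^{\bd{p}}$; then $u_n(\bd{j})\to u(\bd{j})$ for each $\bd{j}\in\T_{0}^{\bd{p}}$ (coordinate convergence follows from norm convergence on a finite index set), and the inequalities $0\leq u_n(\bd{j})\leq (w_0-v_0)(\bd{j})$ pass to the limit, so $0\leq u\leq w_0-v_0$ on $\T_{0}^{\bd{p}}$, hence on all of $\Z^n$ by periodicity; thus $u\in\MG^{\bd{p}}_{0}$. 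Alternatively one can simply invoke Lemma \ref{lem:subsequ}: from any sequence in $\MG^{\bd{p}}_{0}$ one extracts a pointwise convergent subsequence, whose limit again lies between $0$ and $w_0-v_0$, and pointwise convergence on the finite set $\T_{0}^{\bd{p}}$ upgrades automatically to $\norm{\cdot}_{\Lambda_{0}^{\bd{p}}}$-convergence.

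Finally I would conclude by Heine--Borel (or equivalently Bolzano--Weierstrass) in $\R^{|\T_{0}^{\bd{p}}|}$: a closed bounded subset of a finite-dimensional normed space is compact, and compactness transfers back through the isometry. I do not anticipate a genuine obstacle here; the only thing to be careful about is the bookkeeping identifying $\bd{p}$-periodic functions with their restrictions to $\T_{0}^{\bd{p}}$ and noting that $\norm{\cdot}_{\Lambda_{0}^{\bd{p}}}$ is exactly the Euclidean norm of that restriction, so that no infinite-dimensional phenomena can arise. If one prefers to avoid the isometry language altogether, the cleanest route is the two-line argument: given $\{u_n\}\subset\MG^{\bd{p}}_{0}$, apply Lemma \ref{lem:subsequ} with $v=0$ and $w=w_0-v_0$ to get a pointwise-convergent subsequence $u_{n_k}\to u$; since each $u_{n_k}$ and $u$ are $\bd{p}$-periodic, pointwise convergence means convergence of the finitely many coordinates indexed by $\T_{0}^{\bd{p}}$, which is precisely $\norm{u_{n_k}-u}_{\Lambda_{0}^{\bd{p}}}\to 0$, and $0\leq u\leq w_0-v_0$ shows $u\in\MG^{\bd{p}}_{0}$.
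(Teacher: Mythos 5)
Your proof is correct and your final ``two-line argument'' is exactly the paper's proof: apply Lemma \ref{lem:subsequ} to extract a pointwise-convergent subsequence, then note that for $\bd{p}$-periodic functions pointwise convergence on the finite index set $\T_{0}^{\bd{p}}$ is equivalent to $\norm{\cdot}_{\Lambda_{0}^{\bd{p}}}$-convergence. The extra Heine--Borel discussion you give up front is a harmless unpacking of the same finite-dimensionality observation.
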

\begin{proof}
Assume $(u_k) \subset \MG^{\bd{p}}_{0}$.
By the definition of $\norm{\cdot}_{\Lambda_{0}^{\bd{p}}}$, we only need to prove that $(u_k)$ is compact with respect to pointwise convergence, which follows from Lemma \ref{lem:subsequ}.
\end{proof}
\begin{rmk}
Proposition \ref{prop:compact} will be used in the proof of deformation lemma (please see Lemma \ref{lem:2.2} \eqref{lem:2.2-5} below).
In the proof of \cite[Proposition 3.6]{Rabi2006}, Bolotin and Rabinowitz
obtained ``compactness'' by verifying
the corresponding functional
satisfied
Palais-Smale condition.
In our settings, the compactness condition is directly obtained.
\end{rmk}
We have the following deformation lemma.
\begin{lem}\label{lem:2.2}
For the semiflow $\Phi_{t}^{0}$ defined in \eqref{eq:dela7}, we have:
\begin{enumerate}[(i)]
  \item \label{lem:2.2-1} $\Phi_{t}^{0}(u)=u$ if $(I_{0}^{\bd{p}})'(u)=0$;
  \item \label{lem:2.2-2} $I_{0}^{\bd{p}}\left(\Phi_{t}^{0}(u)\right) \leq I_{0}^{\bd{p}}(u)$;
  \item \label{lem:2.2-3} $\Phi_{t}^{0} \MG^{\bd{p}}_{0} \subset \MG^{\bd{p}}_{0}$;
  \item \label{lem:2.2-4} For any $u \in \MG^{\bd{p}}_{0}$, there is a sequence $(t_i)\subset \R$ with $t_i\to \infty$ as $i\to \infty$ such that
  $\Phi_{t_i}^{0}(u)\to U$ pointwise for some $U\in \MG^{\bd{p}}_{0}$, and
  $I_{0}^{\bd{p}}(U)=\lim_{t\to\infty}I_{0}^{\bd{p}}(\Phi_{t}^{0}(u))$, and $U+v_0$ is a solution of \eqref{eq:PDE};
  \item \label{lem:2.2-5} If $\MK_{c}:=\{u \in \MG^{\bd{p}}_{0} \,|\, I_{0}^{\bd{p}}(u)=c, (I_{0}^{\bd{p}})'(u)=0\}=\emptyset$,
  there  is an $\epsilon>0$ such that $\Phi_{1}^{0}((I_{0}^{\bd{p}})^{c+\epsilon}) \subset (I_{0}^{\bd{p}})^{c-\epsilon}$,
  where $(I_{0}^{\bd{p}})^{t}:=\{u\in \MG^\bd{p}_{0}\,|\, I_{0}^{\bd{p}}(u)\leq t\}$.
\end{enumerate}
\end{lem}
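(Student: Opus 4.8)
The plan is to treat \eqref{eq:dela7} as the negative gradient flow of $I_0^{\bd p}$ on the \emph{finite-dimensional} space $\Lambda_0^{\bd p}$: by \eqref{eq:derivative} and \eqref{eq:dela7}, $\Phi_t^0$ is the negative $\ell^2$-gradient flow of $I_0^{\bd p}$, i.e.\ $\partial_t\Phi_t^0(u)=-W(\Phi_t^0(u))$ where $W(u)$ represents $(I_0^{\bd p})'(u)$ in the inner product $\langle u,v\rangle=\sum_{\bd j\in\T_0^{\bd p}}u(\bd j)v(\bd j)$, and (by the periodicities, as noted after \eqref{eq:derivative}) $W(u)=0$ is equivalent to $u+v_0$ solving \eqref{eq:PDE}. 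Two ingredients will be used throughout: the energy identity $\frac{\ud}{\ud t}I_0^{\bd p}(\Phi_t^0(u))=-\norm{W(\Phi_t^0(u))}_{\Lambda_0^{\bd p}}^2$, obtained by differentiating and using \eqref{eq:dela7}; and the observation that, since $v_0,w_0\in\MM_0$ solve \eqref{eq:PDE}, both $0$ and $w_0-v_0$ are zeros of $W$, hence equilibria of the flow.

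Granting this, \eqref{lem:2.2-1}, \eqref{lem:2.2-2}, \eqref{lem:2.2-3} are quick. For \eqref{lem:2.2-1}: if $(I_0^{\bd p})'(u)=0$ then the constant curve $t\mapsto u$ solves \eqref{eq:dela7}, so uniqueness in the Cauchy--Lipschitz--Picard theorem (applicable since $W$ is globally Lipschitz by \eqref{eq:lipschitz}) forces $\Phi_t^0(u)=u$. For \eqref{lem:2.2-2}: integrate the energy identity, whose right-hand side is $\le 0$. For \eqref{lem:2.2-3}: by \eqref{lem:2.2-1} the equilibria satisfy $\Phi_t^0(0)\equiv 0$ and $\Phi_t^0(w_0-v_0)\equiv w_0-v_0$; given $u\in\MG_0^{\bd p}$, i.e.\ $0\le u\le w_0-v_0$, Proposition \ref{prop:comparison} applied to the pairs $0\le u$ and $u\le w_0-v_0$ (the degenerate cases $u=0$, $u=w_0-v_0$ being trivial) yields $0\le\Phi_t^0(u)\le w_0-v_0$, i.e.\ $\Phi_t^0(u)\in\MG_0^{\bd p}$, for all $t\ge 0$.

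For \eqref{lem:2.2-4} I would run a LaSalle-type invariance argument. By \eqref{lem:2.2-3} the orbit through $u$ stays in the compact set $\MG_0^{\bd p}$ (Proposition \ref{prop:compact}), so $t\mapsto I_0^{\bd p}(\Phi_t^0(u))$ is nonincreasing (by \eqref{lem:2.2-2}) and bounded below, hence converges to some $c^\ast$; integrating the energy identity then gives $\int_0^\infty\norm{W(\Phi_t^0(u))}_{\Lambda_0^{\bd p}}^2\,\ud t<\infty$, so $W(\Phi_{t_i}^0(u))\to 0$ along some $t_i\to\infty$. Passing to a subsequence, compactness of $\MG_0^{\bd p}$ gives $\Phi_{t_i}^0(u)\to U\in\MG_0^{\bd p}$ in $\norm{\cdot}_{\Lambda_0^{\bd p}}$, hence pointwise; continuity of $W$ (Lipschitz by \eqref{eq:lipschitz}) forces $W(U)=0$, so $U+v_0$ solves \eqref{eq:PDE}, and continuity of $I_0^{\bd p}$ gives $I_0^{\bd p}(U)=c^\ast=\lim_{t\to\infty}I_0^{\bd p}(\Phi_t^0(u))$.

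The part needing the most care is the quantitative deformation \eqref{lem:2.2-5}, which I would prove by contradiction: if no such $\epsilon$ works, then for each $k$ there is $u_k\in(I_0^{\bd p})^{c+1/k}$ with $I_0^{\bd p}(\Phi_1^0(u_k))>c-1/k$ (note $\Phi_1^0(u_k)\in\MG_0^{\bd p}$ by \eqref{lem:2.2-3}); by \eqref{lem:2.2-2}, $c-1/k<I_0^{\bd p}(\Phi_t^0(u_k))\le c+1/k$ for all $t\in[0,1]$. Compactness of $\MG_0^{\bd p}$ gives a subsequence $u_k\to\bar u\in\MG_0^{\bd p}$, and continuous dependence of the flow on initial data (from the global Lipschitz bound \eqref{eq:lipschitz}, via Gronwall) gives $\Phi_t^0(u_k)\to\Phi_t^0(\bar u)$ uniformly on $[0,1]$, whence $I_0^{\bd p}(\Phi_t^0(\bar u))=c$ for every $t\in[0,1]$; differentiating this constant function via the energy identity yields $W(\Phi_t^0(\bar u))=0$ on $[0,1]$, so $(I_0^{\bd p})'(\bar u)=W(\bar u)=0$ and $I_0^{\bd p}(\bar u)=c$, i.e.\ $\bar u\in\MK_c$, contradicting $\MK_c=\emptyset$. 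The only genuine obstacle is conceptual rather than computational: everything hinges on the orbit never leaving $\MG_0^{\bd p}$, so that the compactness of Proposition \ref{prop:compact} can play the role of a Palais--Smale condition, and that in turn rests on recognizing $0$ and $w_0-v_0$ as equilibria and invoking the comparison principle of Proposition \ref{prop:comparison}; once \eqref{lem:2.2-3} is in hand the remaining steps are routine.
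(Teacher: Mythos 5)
Your proof of parts \eqref{lem:2.2-1}--\eqref{lem:2.2-4} coincides with the paper's: uniqueness of the Cauchy problem for \eqref{lem:2.2-1}, the energy identity $\frac{\ud}{\ud t}I_0^{\bd p}(\Phi_t^0(u))=-\norm{W(\Phi_t^0(u))}_{\Lambda_0^{\bd p}}^2$ for \eqref{lem:2.2-2}, the equilibria $0$ and $w_0-v_0$ together with Proposition~\ref{prop:comparison} for \eqref{lem:2.2-3}, and the LaSalle-type argument combining compactness (Proposition~\ref{prop:compact}) with $\int_0^\infty\norm{W(\Phi_t^0(u))}^2\,\ud t<\infty$ for \eqref{lem:2.2-4}. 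All of this is correct and essentially identical to what the paper does.

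For \eqref{lem:2.2-5} you take a genuinely different route. The paper first proves a uniform lower bound $\norm{(I_0^{\bd p})'(u)}_{(\Lambda_0^{\bd p})'}\ge\sqrt{2\epsilon}$ on the band $(I_0^{\bd p})^{c+\epsilon}_{c-\epsilon}$ (by a compactness contradiction), and then runs a direct quantitative estimate: integrating the energy identity over $[0,1]$ forces the value down by at least $2\epsilon$, so any $u$ starting in $(I_0^{\bd p})^{c+\epsilon}$ that is still above $c-\epsilon$ at time $1$ contradicts the bound. You instead argue by contradiction at the level of orbits: extract $u_k\to\bar u$ with $c-1/k<I_0^{\bd p}(\Phi_t^0(u_k))\le c+1/k$ on $[0,1]$, use Gronwall continuous dependence (available because $W$ is globally Lipschitz by \eqref{eq:lipschitz}) to pass to the limit uniformly, get $I_0^{\bd p}(\Phi_t^0(\bar u))\equiv c$ on $[0,1]$, and read off $W(\bar u)=0$ from the energy identity, contradicting $\MK_c=\emptyset$. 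Both arguments are valid and use the same two ingredients, compactness of $\MG_0^{\bd p}$ and the energy identity; the paper's version is more quantitative in that it isolates the gradient lower bound as a stand-alone Palais--Smale-type estimate, while yours dispenses with that intermediate statement at the cost of invoking continuous dependence of the flow on initial data, which the paper does not need to use explicitly.
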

In \cite[Lemma 8.6]{mramor}, the authors established Mountain Pass Theorem by imposing
a condition that the functional was a Morse function.
This condition is used to prove a similar property of Lemma \ref{lem:2.2} \eqref{lem:2.2-5}.
We prove Lemma \ref{lem:2.2} in Appendix \ref{sec:app1}.
Now set
\begin{equation*}
\MH^{\bd{p}}_{0} =\left\{h \in C\left([0,1], \MG^{\bd{p}}_{0}\right) \,|\, h(0)=0, h(1)=w_{0}-v_0\right\}
\end{equation*}
and
\begin{equation*}
d_{0}^{\bd{p}}=\inf _{h \in \MH^{\bd{p}}_{0}} \max _{\theta \in[0,1]} I_{0}^{\bd{p}}(h(\theta)).
\end{equation*}
\begin{prop}\label{thm:2.1-1-99}
$d_{0}^{\bd{p}}>c_{0}^{\bd{p}}$.
\end{prop}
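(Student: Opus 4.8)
The plan is to prove the strict inequality $d_0^{\bd p} > c_0^{\bd p}$ by showing that no admissible path $h\in\MH_0^{\bd p}$ can stay entirely at the level $c_0^{\bd p}$, and in fact every path must climb strictly above it. Recall $c_0^{\bd p} = \inf_{\Lambda_0^{\bd p}} J_0^{\bd p}$ is the minimum of $J_0^{\bd p}$, so $I_0^{\bd p}(u) = J_0^{\bd p}(u+v_0) \ge c_0^{\bd p}$ for all $u\in\Lambda_0^{\bd p}$, which already gives $d_0^{\bd p}\ge c_0^{\bd p}$. The point is to rule out equality. Suppose for contradiction that $d_0^{\bd p} = c_0^{\bd p}$; then there is a sequence $h_k\in\MH_0^{\bd p}$ with $\max_{\theta} I_0^{\bd p}(h_k(\theta)) \to c_0^{\bd p}$.

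First I would fix a separation constant: since $v_0, w_0\in\MM_0$ are adjacent with $v_0<w_0$, and by Lemma \ref{lem:2.5miao}/Proposition \ref{prop:comparison}-type arguments $\MM_0$ (periodic minimizers) is an ordered set with $0$ and $w_0-v_0$ corresponding to the two adjacent members, there is no $u\in\MG_0^{\bd p}$ with $I_0^{\bd p}(u)=c_0^{\bd p}$ other than (translates equal to) $0$ and $w_0-v_0$; more precisely, by Lemma \ref{prop:2.2369852145} the set of minimizers of $J_0^{\bd p}$ equals $\MM_0$, so within $\MG_0^{\bd p}$ the only critical-level points are $0$ and $w_0-v_0$. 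Next, for each $k$, since $h_k$ is a continuous path in $\MG_0^{\bd p}$ from $0$ to $w_0-v_0$ and $\MG_0^{\bd p}$ is compact (Proposition \ref{prop:compact}), I would pick $\theta_k$ to be the first time the path reaches, say, the "midway" region --- e.g. $\theta_k := \sup\{\theta : h_k(\theta)(\bd j_0) \le \tfrac12(w_0-v_0)(\bd j_0)\}$ for a fixed base point $\bd j_0$, or more robustly the first $\theta$ with $\norm{h_k(\theta)}_{\Lambda_0^{\bd p}} = \tfrac12\norm{w_0-v_0}_{\Lambda_0^{\bd p}}$. Set $u_k := h_k(\theta_k)$. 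Then $u_k\in\MG_0^{\bd p}$ is bounded away (in norm) from both $0$ and $w_0-v_0$, and $I_0^{\bd p}(u_k)\le \max_\theta I_0^{\bd p}(h_k(\theta))\to c_0^{\bd p}$.

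By compactness of $\MG_0^{\bd p}$, pass to a subsequence so $u_k\to U$ in $\Lambda_0^{\bd p}$; by continuity of $I_0^{\bd p}$, $I_0^{\bd p}(U) = c_0^{\bd p}$, so $U+v_0$ minimizes $J_0^{\bd p}$ and hence $U+v_0\in\MM_0$ by Lemma \ref{prop:2.2369852145}. But $U$ inherits the separation: $U\ne 0$ and $U\ne w_0-v_0$, while $v_0\le U+v_0\le w_0$, so $U+v_0$ is an element of $\MM_0$ strictly between $v_0$ and $w_0$ (using Lemma \ref{lem:2.5miao} to upgrade $\le$ to strict inequalities, since $U+v_0$, $v_0$, $w_0$ are all solutions of \eqref{eq:PDE}). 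This contradicts the adjacency assumption \eqref{eq:*0}. Hence $d_0^{\bd p} > c_0^{\bd p}$.

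The main obstacle I anticipate is making the "bounded away from the endpoints" step clean: one must choose the slicing parameter $\theta_k$ so that the limit $U$ is genuinely pinned strictly between $0$ and $w_0-v_0$ in a way that survives passing to the limit, and then invoke the ordering/strong-comparison lemma (Lemma \ref{lem:2.5miao}) correctly to conclude $v_0 < U+v_0 < w_0$ rather than merely $\le$. A convenient device is to use a continuous functional like $u\mapsto\norm{u}_{\Lambda_0^{\bd p}}$ (or a single coordinate $u\mapsto u(\bd j_0)$, noting $0<(w_0-v_0)(\bd j_0)$ by Lemma \ref{lem:2.5miao} applied to $v_0<w_0$) together with the intermediate value theorem along the path to extract $u_k$ with a fixed intermediate value; this guarantees the limit $U$ has that same intermediate value and therefore cannot coincide with either endpoint. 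Everything else is a direct application of the already-established facts: $c_0^{\bd p}$ is the minimum (Lemma \ref{prop:2.2369852145}), $\MG_0^{\bd p}$ is compact (Proposition \ref{prop:compact}), $I_0^{\bd p}$ is continuous (indeed $C^1$), and \eqref{eq:*0} forbids intermediate minimizers.
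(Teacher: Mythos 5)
Your proposal is correct and takes essentially the same approach as the paper: suppose $d_0^{\bd p}=c_0^{\bd p}$, slice each nearly optimal path by the intermediate value theorem at a fixed coordinate (the paper uses $h_j(\sigma_j)(\bd 0)=\tfrac12(w_0-v_0)(\bd 0)$), pass to a limit $U$ by compactness of $\MG_0^{\bd p}$, identify $U+v_0\in\MM_0$ via Lemma \ref{prop:2.2369852145}, and contradict the adjacency \eqref{eq:*0}. The paper phrases the final step as "$U+v_0=v_0$ or $w_0$, but the fixed coordinate value forbids both," while you phrase it as "$U+v_0$ is strictly between $v_0$ and $w_0$, contradicting adjacency" — these are the same argument.
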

\begin{proof}
By Lemma \ref{prop:2.2369852145},
for any $u \in \Lambda_{0}^{\bd{p}}$,
\begin{equation}\label{eq:2.2}
I_{0}^{\bd{p}}(u) \geq c_{0}^{\bd{p}}.
\end{equation}
So $d_{0}^{\bd{p}}\geq c_{0}^{\bd{p}}$.
Suppose, by contradiction, $d_{0}^{\bd{p}}=c_{0}^{\bd{p}}$.
Then there exist $h_j\in \MH^{\bd{p}}_{0}$ and $\sigma_j\in (0, 1)$
such that
\begin{equation}\label{eq:2.3}
\max _{\theta \in[0,1]} I_{0}^{\bd{p}}\left(h_{j}(\theta)\right) \rightarrow c_{0}^{\bd{p}}\quad \quad \textrm{as  }j\to\infty
\end{equation}
and
\begin{equation}\label{eq:2.4}
h_{j}\left(\sigma_{j}\right) (\bd{0})=\frac{1}{2} \left(w_{0}-v_{0}\right)(\bd{0}) .
\end{equation}
By \eqref{eq:2.2}-\eqref{eq:2.3}, we have
\begin{equation}\label{eq:2.5}
I_{0}^{\bd{p}}\left(h_{j}\left(\sigma_{j}\right)\right) \rightarrow c_{0}^{\bd{p}}\quad\quad \textrm{as  }j\to\infty.
\end{equation}
Since $h_j(\sigma_j)\in \MG^{\bd{p}}_{0}$, a compact set by Proposition \ref{prop:compact},
$h_j(\sigma_j)$ has a subsequence (still denoted by $h_j(\sigma_j)$) which
converges
in $\Lambda_{0}^{\bd{p}}$ to
$U\in \MG^{\bd{p}}_{0}$. Since $I_{0}^{\bd{p}}$ is continuous on $\Lambda_{0}^{\bd{p}}$, by \eqref{eq:2.2} and \eqref{eq:2.5},
\begin{equation*}
c_{0}^{\bd{p}} \leq I_{0}^{\bd{p}}(U) =\lim _{j \rightarrow \infty} I_{0}^{\bd{p}}\left(h_{j}\left(\sigma_{j}\right)\right)=c_{0}^{\bd{p}}.
\end{equation*}
Then
$U \in \mathcal{M}_{0}^{\bd{p}}= \mathcal{M}_{0}$.
Hence
$U+v_0=v_0$ or $U+v_0=w_{0}$.
But by \eqref{eq:2.4},
\begin{equation*}
 U(\bd{0})=\frac{1}{2} \left(w_{0}-v_{0}\right)(\bd{0}),
\end{equation*}
a contradiction. So $d_{0}^{\bd{p}}> c_{0}^{\bd{p}}$.
\end{proof}

\begin{thm}\label{thm:2.1}
$d_{0}^{\bd{p}}$ is a critical value of $I_{0}^{\bd{p}}$ on $\Lambda_{0}^{\bd{p}}$ with
a corresponding critical point $u_{\bd{p}}$ satisfying $0< u_{\bd{p}} < w_0-v_0 $ and $u_{\bd{p}}+v_0$ is a solution of \eqref{eq:PDE}.
\end{thm}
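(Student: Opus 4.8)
The plan is to run the classical mountain pass argument, but with Lemma~\ref{lem:2.2} playing the role of the usual deformation lemma (so no Palais-Smale condition is needed) and Proposition~\ref{thm:2.1-1-99} guaranteeing that the min-max level lies strictly above the trivial level. The first step is to observe that $u=0$ and $u=w_0-v_0$ are critical points of $I_0^{\bd{p}}$ that the semiflow pins down. Indeed $0+v_0=v_0$ and $(w_0-v_0)+v_0=w_0$ belong to $\MM_0$, hence solve \eqref{eq:PDE}, so by \eqref{eq:derivative} we get $(I_0^{\bd{p}})'(0)=(I_0^{\bd{p}})'(w_0-v_0)=0$, and therefore Lemma~\ref{lem:2.2}\eqref{lem:2.2-1} gives $\Phi_t^0(0)=0$ and $\Phi_t^0(w_0-v_0)=w_0-v_0$ for all $t\geq 0$.

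Next, suppose for contradiction that $\MK_{d_0^{\bd{p}}}=\emptyset$, i.e.\ there is no critical point of $I_0^{\bd{p}}$ in $\MG_0^{\bd{p}}$ at level $d_0^{\bd{p}}$. Then Lemma~\ref{lem:2.2}\eqref{lem:2.2-5} provides an $\epsilon>0$ with $\Phi_1^0\big((I_0^{\bd{p}})^{d_0^{\bd{p}}+\epsilon}\big)\subset (I_0^{\bd{p}})^{d_0^{\bd{p}}-\epsilon}$. By definition of $d_0^{\bd{p}}$ choose $h\in\MH_0^{\bd{p}}$ with $\max_{\theta\in[0,1]}I_0^{\bd{p}}(h(\theta))<d_0^{\bd{p}}+\epsilon$, so $h(\theta)\in(I_0^{\bd{p}})^{d_0^{\bd{p}}+\epsilon}$ for every $\theta$, and set $\tilde h(\theta):=\Phi_1^0(h(\theta))$. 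Continuity of $\tilde h$ follows from continuous dependence of the semiflow on initial data (Cauchy-Lipschitz-Picard, using the Lipschitz estimate \eqref{eq:lipschitz}); Lemma~\ref{lem:2.2}\eqref{lem:2.2-3} gives $\tilde h(\theta)\in\MG_0^{\bd{p}}$; and the first step gives $\tilde h(0)=0$, $\tilde h(1)=w_0-v_0$. Hence $\tilde h\in\MH_0^{\bd{p}}$, while $\max_{\theta}I_0^{\bd{p}}(\tilde h(\theta))\leq d_0^{\bd{p}}-\epsilon<d_0^{\bd{p}}$, contradicting the definition of $d_0^{\bd{p}}$ as an infimum. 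Thus $\MK_{d_0^{\bd{p}}}\neq\emptyset$: there is $u_{\bd{p}}\in\MG_0^{\bd{p}}$ with $I_0^{\bd{p}}(u_{\bd{p}})=d_0^{\bd{p}}$ and $(I_0^{\bd{p}})'(u_{\bd{p}})=0$, so $d_0^{\bd{p}}$ is a critical value and, by the discussion following \eqref{eq:derivative}, $u_{\bd{p}}+v_0$ solves \eqref{eq:PDE}.

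It remains to promote the bounds $0\leq u_{\bd{p}}\leq w_0-v_0$ (which hold since $u_{\bd{p}}\in\MG_0^{\bd{p}}$) to strict inequalities. Since $v_0$, $u_{\bd{p}}+v_0$, and $w_0$ are all solutions of \eqref{eq:PDE} with $v_0\leq u_{\bd{p}}+v_0\leq w_0$, Lemma~\ref{lem:2.5miao} forces each pair $(v_0,u_{\bd{p}}+v_0)$ and $(u_{\bd{p}}+v_0,w_0)$ to be either strictly ordered or equal; equality in either case would give $u_{\bd{p}}+v_0\in\{v_0,w_0\}\subset\MM_0$, whence $I_0^{\bd{p}}(u_{\bd{p}})=c_0^{\bd{p}}$, contradicting $d_0^{\bd{p}}>c_0^{\bd{p}}$ from Proposition~\ref{thm:2.1-1-99}. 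Therefore $v_0<u_{\bd{p}}+v_0<w_0$, i.e.\ $0<u_{\bd{p}}<w_0-v_0$.

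I expect the whole argument to be routine once Lemma~\ref{lem:2.2} is in hand; the only point requiring care is the admissibility of the deformed path $\tilde h$, which hinges precisely on the semiflow fixing the endpoints $0$ and $w_0-v_0$ and preserving $\MG_0^{\bd{p}}$ — both supplied by Lemma~\ref{lem:2.2}\eqref{lem:2.2-1} and \eqref{lem:2.2-3}. The genuine work (the compactness/deformation statements of Lemma~\ref{lem:2.2}) is deferred to the appendix, so nothing hard remains here.
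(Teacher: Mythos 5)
Your argument is correct and is essentially the same as the paper's: contradiction via Lemma~\ref{lem:2.2}\eqref{lem:2.2-5} to get a critical point in $\MG_0^{\bd{p}}$, then Lemma~\ref{lem:2.5miao} for the strict ordering. The only additions you make — spelling out why $\Phi_t^0$ fixes the endpoints, why $\Phi_1^0\circ h$ is continuous, and why equality with $v_0$ or $w_0$ is ruled out via Proposition~\ref{thm:2.1-1-99} — are details the paper leaves implicit, not a different route.
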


\begin{proof}
Suppose, by contradiction, $\MK_{d_{0}^{\bd{p}}}= \emptyset$.
Lemma \ref{lem:2.2} \eqref{lem:2.2-5} implies that there exists $\epsilon>0$ such that
\begin{equation}\label{eq:haha2}
  \Phi_{1}^{0}((I_{0}^{\bd{p}})^{d_{0}^{\bd{p}}+\epsilon}) \subset (I_{0}^{\bd{p}})^{d_{0}^{\bd{p}}-\epsilon}.
\end{equation}
By the definition of $d_{0}^{\bd{p}}$, there is an
$h \in \MH^{\bd{p}}_{0}$
satisfying
\begin{equation*}
\max _{\theta \in[0,1]} I_{0}^{\bd{p}}(h(\theta)) \leq d_{0}^{\bd{p}}+\epsilon.
\end{equation*}
Then by Lemma \ref{lem:2.2} \eqref{lem:2.2-1} and \eqref{lem:2.2-3}, we have
$\Phi_{1}^{0} \circ h \in \MH^{\bd{p}}_{0}$.
But by \eqref{eq:haha2},
$$
\max _{\theta \in[0,1]} I_{0}^{\bd{p}}\left(\Phi_{1}^{0} \circ h(\theta)\right) \leq d_{0}^{\bd{p}}-\epsilon,
$$
which is impossible by
the definition of $d_{0}^{\bd{p}}$.
Thus $\MK_{d_{0}^{\bd{p}}}\neq \emptyset$ and $d_{0}^{\bd{p}}$
is a critical value with a corresponding critical point $u_{\bd{p}}\in \MG^{\bd{p}}_{0}$,
so $0\leq u_{\bd{p}}\leq w_0 -v_0$.
Since $u_{\bd{p}}+v_0$ is a solution of \eqref{eq:PDE}, by Lemma \ref{lem:2.5miao}, $v_0< u_{\bd{p}}+v_0< w_0 $.
\end{proof}
\begin{rem}
Another proof of Theorem \ref{thm:2.1} by heat flow method is provided in Appendix \ref{sec:app1}.
We prefer the above argument because it is more intuitive.
\end{rem}

As in \cite{Rabi2007}, we prove that $d_{0}^{\bd{p}}$ is indeed a mountain pass critical value as follows.
Set
\begin{equation*}
  \bar{\MH}^{\bd{p}}_{0}=\{\bar{h}\in C([0,1], \Lambda_{0}^{\bd{p}})\,|\, \bar{h}(0)=0, \bar{h}(1)=w_0-v_0\}
\end{equation*}
and
\begin{equation*}
\bar{d}_{0}^{\bd{p}}=\inf _{\bar{h} \in \bar{\MH}^{\bd{p}}_{0}} \max _{\theta \in[0,1]} I_{0}^{\bd{p}}\left(\bar{h}(\theta)\right).
\end{equation*}
So $\bar{d}_{0}^{\bd{p}}$ is a classical mountain pass critical value. We have:
\begin{prop}
$d_{0}^{\bd{p}}=\bar{d}_{0}^{\bd{p}}$.
\end{prop}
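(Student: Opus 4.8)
The plan is to show the two inequalities $d_0^{\bd{p}}\leq\bar d_0^{\bd{p}}$ and $\bar d_0^{\bd{p}}\leq d_0^{\bd{p}}$. The first is immediate: every $h\in\MH_0^{\bd{p}}$ is in particular a continuous path in $\Lambda_0^{\bd{p}}$ from $0$ to $w_0-v_0$, i.e.\ $\MH_0^{\bd{p}}\subset\bar\MH_0^{\bd{p}}$, so taking the infimum over the larger class $\bar\MH_0^{\bd{p}}$ can only decrease the min-max value; hence $\bar d_0^{\bd{p}}\leq d_0^{\bd{p}}$. The real content is the reverse inequality $d_0^{\bd{p}}\leq\bar d_0^{\bd{p}}$, which amounts to saying that an arbitrary path $\bar h$ in the full space $\Lambda_0^{\bd{p}}$ can be replaced by a path inside the order interval $\MG_0^{\bd{p}}=\{0\leq u\leq w_0-v_0\}$ without raising the max of $I_0^{\bd{p}}$ along it.

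First I would fix $\bar h\in\bar\MH_0^{\bd{p}}$ and define the truncated path $h(\theta):=\min\bigl(\max(\bar h(\theta),0),\,w_0-v_0\bigr)$, i.e.\ the pointwise projection of $\bar h(\theta)$ onto $\MG_0^{\bd{p}}$. This $h$ is continuous from $[0,1]$ into $\Lambda_0^{\bd{p}}$ (the maps $u\mapsto\max(u,0)$ and $u\mapsto\min(u,w_0-v_0)$ are $1$-Lipschitz on $\Lambda_0^{\bd{p}}$ since they act coordinatewise), it takes values in $\MG_0^{\bd{p}}$ by construction, and the endpoints are preserved: $\bar h(0)=0$ and $\bar h(1)=w_0-v_0$ are already in $\MG_0^{\bd{p}}$, so $h(0)=0$, $h(1)=w_0-v_0$. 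Thus $h\in\MH_0^{\bd{p}}$. It then remains to show $I_0^{\bd{p}}(h(\theta))\leq\max_{\theta'}I_0^{\bd{p}}(\bar h(\theta'))$ for each $\theta$, in fact I would aim for the cleaner pointwise bound $I_0^{\bd{p}}(h(\theta))\leq\max\{I_0^{\bd{p}}(\bar h(\theta)),\,c_0^{\bd{p}}\}$, which suffices because $c_0^{\bd{p}}\leq\bar d_0^{\bd{p}}$ (indeed $c_0^{\bd{p}}\leq d_0^{\bd{p}}$ and by the first inequality $c_0^{\bd{p}}\le\bar d_0^{\bd p}$; alternatively $I_0^{\bd p}\geq c_0^{\bd p}$ everywhere by \eqref{eq:2.2}).

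The key estimate uses Lemma \ref{lem:2.6miao} twice, in the form adapted to the shifted functional $I_0^{\bd{p}}(u)=J_0^{\bd{p}}(u+v_0)=\sum_{\bd{j}\in\T_0^{\bd{p}}}S_{\bd{j}}(u+v_0)$. Write $g:=\bar h(\theta)$. First apply Lemma \ref{lem:2.6miao} with the pair $g+v_0$ and $w_0$ over the finite set $B=\T_0^{\bd{p}}$: since $\max(g+v_0,w_0)$ and $\min(g+v_0,w_0)$ are the terms appearing, and using that $w_0$ is a minimizer so $\sum_{\bd{j}\in\T_0^{\bd{p}}}S_{\bd{j}}(w_0)=c_0^{\bd{p}}\le\sum_{\bd{j}\in\T_0^{\bd{p}}}S_{\bd j}(\max(g+v_0,w_0))$, we get $\sum_{\bd{j}}S_{\bd{j}}(\min(g+v_0,w_0))\leq\sum_{\bd{j}}S_{\bd{j}}(g+v_0)$, i.e.\ $I_0^{\bd{p}}(\min(g,w_0-v_0))\leq I_0^{\bd{p}}(g)$ after subtracting $v_0$. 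Then apply Lemma \ref{lem:2.6miao} again with the pair $\min(g+v_0,w_0)$ and $v_0$: since $v_0$ is a minimizer, $\sum_{\bd{j}}S_{\bd{j}}(\min(\min(g+v_0,w_0),v_0))=c_0^{\bd p}\leq\sum_{\bd j}S_{\bd j}(\min(g+v_0,w_0))$ is not what we need directly; rather, the inequality $\sum_{\bd j}S_{\bd j}(\max)+\sum_{\bd j}S_{\bd j}(\min)\le\sum_{\bd j}S_{\bd j}(\min(g+v_0,w_0))+\sum_{\bd j}S_{\bd j}(v_0)$ combined with $\sum_{\bd j}S_{\bd j}(\min(\min(g+v_0,w_0),v_0))\geq c_0^{\bd p}$ gives $\sum_{\bd j}S_{\bd j}(\max(\min(g+v_0,w_0),v_0))\leq\sum_{\bd j}S_{\bd j}(\min(g+v_0,w_0))+c_0^{\bd p}-c_0^{\bd p}$, hence $I_0^{\bd p}(h(\theta))\leq I_0^{\bd p}(\min(g,w_0-v_0))\leq I_0^{\bd p}(g)$; chaining the two bounds yields $I_0^{\bd p}(h(\theta))\le I_0^{\bd p}(\bar h(\theta))$. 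Taking the max over $\theta$ and then the infimum over $\bar h$ gives $d_0^{\bd p}\leq\bar d_0^{\bd p}$, completing the proof. The main obstacle is bookkeeping the two applications of the submodularity inequality so that the minimizer values $c_0^{\bd p}$ cancel cleanly; once the telescoping is set up correctly the rest is routine.
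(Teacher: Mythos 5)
Your proof is correct and follows essentially the same approach as the paper: truncate $\bar h(\theta)$ into the order interval $\MG_0^{\bd{p}}$ and apply the submodularity inequality (Lemma \ref{lem:2.6miao}) twice, once against the minimizer $w_0$ and once against the minimizer $v_0$, using $I_0^{\bd p}\geq c_0^{\bd p}$ to discard the unwanted $\max$ or $\min$ term each time so that the $c_0^{\bd p}$'s cancel. (Your intermediate assertion $\sum_{\bd j}S_{\bd j}(\min(\min(g+v_0,w_0),v_0))=c_0^{\bd p}$ is written with equality where it should be $\geq$, but you self-correct to the $\geq$ form before using it, and the detour via $\max\{I_0^{\bd p}(\bar h(\theta)),c_0^{\bd p}\}$ is unnecessary since you end up proving the cleaner pointwise bound $I_0^{\bd p}(h(\theta))\leq I_0^{\bd p}(\bar h(\theta))$, which is exactly what the paper does.)
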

\begin{proof}
Obviously $d_{0}^{\bd{p}}\geq \bar{d}_{0}^{\bd{p}}$.
To prove the converse inequality, for any
$\bar{h}\in \bar{\MH}^{\bd{p}}_{0}$, set
$$
h(\theta)=\max \{\min \left(\bar{h}(\theta), w_{0}-v_0\right), 0\}.
$$
Then $h \in \MH^{\bd{p}}_{0}$ and
\begin{equation*}
  \begin{split}
    &I_{0}^{\bd{p}}(h(\theta)) +c_{0}^{\bd{p}}\\
     \leq &I_{0}^{\bd{p}}(h(\theta)) +  I_{0}^{\bd{p}}(\min \left(\min \left(\bar{h}(\theta), w_{0}- v_{0}\right),0\right))\\
       \leq& I_{0}^{\bd{p}}(\min \left(\bar{h}(\theta), w_{0}-v_{0}\right))+ I_{0}^{\bd{p}}(0)\\
      =&I_{0}^{\bd{p}}(\min \left(\bar{h}(\theta), w_{0}-v_{0}\right))+c_{0}^{\bd{p}},
  \end{split}
\end{equation*}
where the first inequality follows from Lemma \ref{prop:2.2369852145} and the second follows from Lemma \ref{lem:2.6miao}.
Similarly, we have
\begin{equation*}
  \begin{split}
    &I_{0}^{\bd{p}}(\min \left(\bar{h}(\theta), w_{0}-v_0\right)) +c_{0}^{\bd{p}}\\
     \leq &I_{0}^{\bd{p}}(\min \left(\bar{h}(\theta), w_{0}-v_0\right))+ I_{0}^{\bd{p}}(\max \left(\bar{h}(\theta), w_{0}-v_0\right))\\
    \leq & I_{0}^{\bd{p}}(\bar{h}(\theta))+I_{0}^{\bd{p}}(w_{0}-v_0)\\
       =&I_{0}^{\bd{p}}(\bar{h}(\theta))+c_{0}^{\bd{p}},
  \end{split}
\end{equation*}
so
$I_{0}^{\bd{p}}(h(\theta)) \leq I_{0}^{\bd{p}}\left(\bar{h}(\theta)\right)$
for each $\theta$.
\end{proof}
\begin{rem}
As is well-known, mountain pass solutions have Morse index $1$.
Thus part of the arguments in \cite{mramor} can be simplified.
For instance, the proofs of \cite[Theorem 8.4, Lemma 8.6]{mramor}.
\end{rem}

There is another candidate of solution, the maximum of $I_{0}^{\bd{p}}$ on $\MG_{0}^{\bd{p}}$, in the gap of $\MM_0$.
\begin{prop}\label{prop:ldkd}
$I_{0}^{\bd{p}}$ attains the maximum, say $\hat{u}_{\bd{p}}$ on $\MG_{0}^{\bd{p}}$.
If $\bd{j}\in \T^{\bd{p}}_{0}$ such that
$0<\hat{u}_{\bd{p}}(\bd{j})<(w_0-v_0)(\bd{j})$, then
\begin{equation}\label{eq:l111kfg22flkg}
\sum_{\bd{k}:\norm{\bd{k}-\bd{j}}\leq r}\partial_{\bd{k}}S_{\bd{j}}(\hat{u}_{\bd{p}}+v_0)=0,
\end{equation}
i.e.,
$\hat{u}_{\bd{p}}$ satisfies \eqref{eq:PDE} at $\bd{j}$.
\end{prop}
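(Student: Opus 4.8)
The plan is to combine the compactness of $\MG_{0}^{\bd{p}}$ with an elementary one-variable perturbation argument. First I would note that $\MG_{0}^{\bd{p}}$ is nonempty (it contains $0$ and $w_{0}-v_{0}$, since $v_{0}<w_{0}$) and, by Proposition \ref{prop:compact}, compact in $(\Lambda_{0}^{\bd{p}},\norm{\cdot}_{\Lambda_{0}^{\bd{p}}})$. Since $I_{0}^{\bd{p}}\in C^{1}(\Lambda_{0}^{\bd{p}},\R)$ is in particular continuous, it attains its maximum over $\MG_{0}^{\bd{p}}$ at some point, which we name $\hat{u}_{\bd{p}}$. This disposes of the first assertion.

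For the second assertion, fix $\bd{j}\in\T_{0}^{\bd{p}}$ with $0<\hat{u}_{\bd{p}}(\bd{j})<(w_{0}-v_{0})(\bd{j})$, and let $v\in\Lambda_{0}^{\bd{p}}$ be the $\bd{p}$-periodic function taking the value $1$ at every site in $\bd{j}+\bd{p}\Z^{n}$ and $0$ elsewhere (so that, as an element of $\R^{\T_{0}^{\bd{p}}}$, $v(\bd{j}')=1$ if $\bd{j}'=\bd{j}$ and $v(\bd{j}')=0$ for $\bd{j}'\in\T_{0}^{\bd{p}}\setminus\{\bd{j}\}$). The point to keep in mind is that $\hat{u}_{\bd{p}}$ is only a \emph{constrained} maximizer: the function $\hat{u}_{\bd{p}}+tv$ differs from $\hat{u}_{\bd{p}}$ only along the class $\bd{j}+\bd{p}\Z^{n}$, where the strict inequalities $0<\hat{u}_{\bd{p}}(\bd{j})<(w_{0}-v_{0})(\bd{j})$ leave room to move in both directions, while all other box constraints $0\leq\hat{u}_{\bd{p}}\leq w_{0}-v_{0}$ are untouched. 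Hence, setting $\delta:=\min\{\hat{u}_{\bd{p}}(\bd{j}),\,(w_{0}-v_{0})(\bd{j})-\hat{u}_{\bd{p}}(\bd{j})\}>0$, we have $\hat{u}_{\bd{p}}+tv\in\MG_{0}^{\bd{p}}$ for all $t$ with $|t|<\delta$.

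Consequently $g(t):=I_{0}^{\bd{p}}(\hat{u}_{\bd{p}}+tv)$ is defined and of class $C^{1}$ on $(-\delta,\delta)$ and, since $\hat{u}_{\bd{p}}$ maximizes $I_{0}^{\bd{p}}$ over $\MG_{0}^{\bd{p}}$, it has an interior maximum at $t=0$; therefore $g'(0)=(I_{0}^{\bd{p}})'(\hat{u}_{\bd{p}})v=0$. I would then substitute this particular $v$ into the derivative formula \eqref{eq:derivative}: because $v(\bd{j}')=0$ for every $\bd{j}'\in\T_{0}^{\bd{p}}$ other than $\bd{j}$, the sum there collapses to $\sum_{\bd{k}:\norm{\bd{k}-\bd{j}}\leq r}\partial_{\bd{k}}S_{\bd{j}}(\hat{u}_{\bd{p}}+v_{0})$, which must therefore vanish. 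This is exactly \eqref{eq:l111kfg22flkg}, i.e., the Euler--Lagrange equation \eqref{eq:PDE} at $\bd{j}$.

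I do not expect a genuine obstacle here — this is just the interior first-order optimality condition for a constrained extremum — so the only points requiring care are (a) verifying that the single-site perturbation $v$ is admissible, namely that $\hat{u}_{\bd{p}}+tv$ stays inside the box $\MG_{0}^{\bd{p}}$ for small $|t|$, which is precisely where the strictness hypotheses $0<\hat{u}_{\bd{p}}(\bd{j})<(w_{0}-v_{0})(\bd{j})$ are used, and (b) reading off the correct consequence from \eqref{eq:derivative}. At sites where $\hat{u}_{\bd{p}}(\bd{j})\in\{0,\,(w_{0}-v_{0})(\bd{j})\}$ only one-sided perturbations remain admissible, so one obtains only an inequality and cannot conclude \eqref{eq:PDE}; this explains why the statement is restricted to the interior sites.
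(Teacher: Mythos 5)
Your proposal is correct and follows essentially the same argument as the paper: both establish existence of the maximizer via Proposition \ref{prop:compact} and continuity of $I_{0}^{\bd{p}}$, and both derive \eqref{eq:l111kfg22flkg} by perturbing at the single site $\bd{j}$ (periodically extended), using the strict interior hypothesis to keep the perturbation inside $\MG_{0}^{\bd{p}}$, and then reading off the first-order condition from \eqref{eq:derivative}. Your closing remark about one-sided perturbations at boundary sites also matches the remark the paper makes immediately after the proof.
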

\begin{proof}
By the definition of $\MG_{0}^{\bd{p}}$ and
Proposition \ref{prop:compact},
there is $\hat{u}_{\bd{p}}\in \MG_{0}^{\bd{p}}$ such that
$$I_{0}^{\bd{p}}(\hat{u}_{\bd{p}})=\sup_{u\in \MG_{0}^{\bd{p}}}I_{0}^{\bd{p}} (u).$$
If $\bd{j}\in \T^{\bd{p}}_{0}$ such that
$0<\hat{u}_{\bd{p}}(\bd{j})<(w_0-v_0)(\bd{j})$, then
choose $t_0>0$ such that  $0<\hat{u}_{\bd{p}}(\bd{j})+t<(w_0-v_0)(\bd{j})$ hold for all $|t|<t_0$.
For $\bd{k}\in \T^{\bd{p}}_{0}$,
define
\begin{equation*}
  v(\bd{k})=\left\{
              \begin{array}{ll}
                t, &  \bd{k}=\bd{j},\\
                0, & \bd{k}\neq\bd{j},
              \end{array}
            \right.
\end{equation*}
and extend $v$ to be a period function in $\Lambda_{0}^{\bd{p}}$.
Then by \eqref{eq:derivative},
\begin{equation}\label{eq:lkfgflkg}
  0=(I_{0}^{\bd{p}})'(\hat{u}_{\bd{p}})v=t \sum_{\bd{k}:\norm{\bd{k}-\bd{j}}\leq r}\partial_{\bd{k}}S_{\bd{j}}(\hat{u}_{\bd{p}}+v_0),
\end{equation}
where the first equality follows from $\hat{u}_{\bd{p}}$ is a maximum point of $I_{0}^{\bd{p}}$ on $\MG_{0}^{\bd{p}}$.
Since \eqref{eq:lkfgflkg} holds for any $|t|<t_0$,
we have \eqref{eq:l111kfg22flkg}.
\end{proof}

\begin{rem}
If $\bd{j}\in \T^{\bd{p}}_{0}$ such that
$0=\hat{u}_{\bd{p}}(\bd{j})$, then
choose $t_0>0$ such that  $0<\hat{u}_{\bd{p}}(\bd{j})+t<(w_0-v_0)(\bd{j})$ hold for all $0<t<t_0$.
The argument in Proposition \ref{prop:ldkd}
shows that $\sum_{\bd{k}:\norm{\bd{k}-\bd{j}}\leq r}\partial_{\bd{k}}S_{\bd{j}}(\hat{u}_{\bd{p}}+v_0)\leq 0$.
\end{rem}

If $\hat{u}_{\bd{p}}$ obtained in Proposition \ref{prop:ldkd}
is a solution of \eqref{eq:PDE}, then by Lemma \ref{lem:2.5miao},
$0<\hat{u}_{\bd{p}}<w_0-v_0$.
Conversely, if $0<\hat{u}_{\bd{p}}<w_0-v_0$, by Proposition \ref{prop:ldkd}, $\hat{u}_{\bd{p}}$ is a solution of \eqref{eq:PDE}.
Unfortunately, we do not know whether $\hat{u}_{\bd{p}}$ obtained in Proposition \ref{prop:ldkd} is a solution of \eqref{eq:PDE} for general $\bd{p}$.

But for $\bd{p}=(1,\cdots,1)=:\bd{1}$, we see $0<\hat{u}_{\bd{1}}<w_0-v_0$, thus $\hat{u}_{\bd{1}}$ is a solution of \eqref{eq:PDE}.
In fact, as one can easily see, $\hat{u}_{\bd{p}}$ is same to the mountain pass solution of Theorem \ref{thm:2.1}.

For $\bd{p}\neq (1,\cdots,1)$, $\hat{u}_{\bd{p}}$ may be coincide with $\hat{u}_{\bd{1}}$ and it may not give more solutions.
To see this, we examine the classical Frenkel-Kontorova model.
\begin{exam}
Let $n=2$ and $\bd{p}(1)=(1,0)$, $\bd{p}(2)=(1,1)$.
Set
\begin{equation*}
  S_{\bd{0}}(u):=s(u|_{B_{\bd{0}}^{1}}):=\sin(2\pi u(\bd{0}))+\frac{1}{16}\sum_{\|\bd{j}\|=1}[u(\bd{j})-u(\bd{0})]^2.
\end{equation*}
Then $\inf_{u\in \Lambda_{0}^{\bd{p}(1)}}S_{\bd{0}}(u)=-1$ is attained at $k-\frac{1}{4}$ ($k\in\Z$)
and $\sup_{u\in \Lambda_{0}^{\bd{p}(1)}}S_{\bd{0}}(u)=1$
is attained at $k+\frac{1}{4}$ ($k\in\Z$).
Assume the gap pair is $v_0=-\frac{1}{4}$ and $w_0=\frac{3}{4}$.
So $I_{0}^{\bd{p}(1)}(u):=S_{\bd{0}}(u-\frac{1}{4})$ and
$\inf_{u\in \Lambda_{0}^{\bd{p}(1)}}I_{0}^{\bd{p}(1)}(u)=-1$ is attained at $k$ ($k\in\Z$)
and $\sup_{u\in \Lambda_{0}^{\bd{p}(1)}}I_{0}^{\bd{p}(1)}(u)=1$
is attained at $k+\frac{1}{2}$ ($k\in\Z$).

On the other hand,
\begin{equation*}
\begin{split}
I_{0}^{\bd{p}(2)}(u):=&S_{\bd{0}}(u-\frac{1}{4})+S_{\bd{0}}(\tau^{1}_{-1}u-\frac{1}{4})\\
=&\frac{1}{4}[u((1,0))-u((0,0))]^2-\cos(2\pi u((0,0)))-\cos(2\pi u((1,0))).
\end{split}
\end{equation*}
In the gap of $v_0,w_0$, there is one locally maximum point $u\equiv \frac{1}{2}$, and there are two mountain pass critical points $\bar{u}$ and $\tau^{1}_{-1}\bar{u}$
but no other locally maximum point exists.
Please see Figure \ref{fig:1}
for the graph of $I_{0}^{\bd{p}(2)}$.
\end{exam}
\begin{figure}
    \begin{minipage}[t]{0.8\linewidth}
    \centering
    \includegraphics[width=100mm,height=87mm]{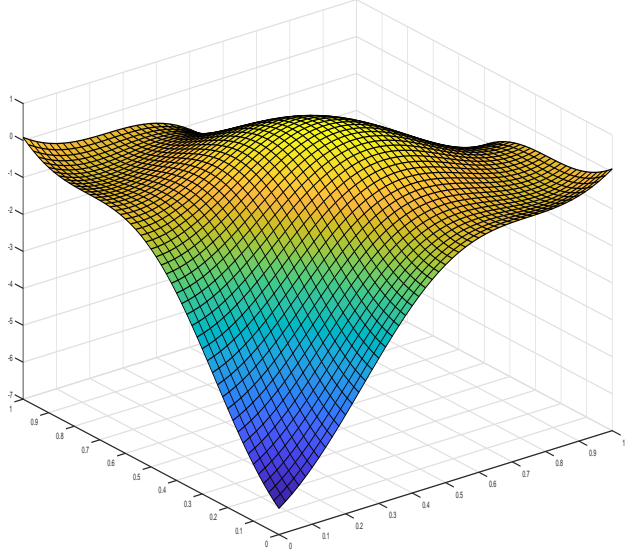}
    \caption{Graph of the function $I_{0}^{\bd{p}(2)}$}\label{fig:1}
    \end{minipage}
\end{figure}

\bigskip
\subsection{Multiplicity of periodic mountain pass solutions}\label{sec:multimountain}
\

We shall prove that
varying $\bd{p}$ will produce more periodic mountain pass solutions.
This is different with periodic minimal and Birkhoff solutions (cf. Section \ref{sec:4524573}, Lemma \ref{prop:2.2369852145}).
Toward this end,
for
$k\in\N$, assume $\bd{p}(k)=(k,1,\cdots,1)\in\N^{n}$.
By Theorem \ref{thm:2.1}, there exists
$u_{k}^*\in \MG_{0}^{\bd{p}(k)}$ such that $u_k ^* +v_0$
is a mountain pass solution.
Since $\MG_{0}^{\bd{p}(1)}\subset \MG_{0}^{\bd{p}(k)}$, $u_1 ^*\in \MG_{0}^{\bd{p}(k)}$.
It is possible that $u_k ^*=u_1 ^*$.
But we shall show this cannot happen for infinite many $k$.
First we have
\begin{prop}\label{prop:2.3}
There is a constant $M_0$, independent of $k$, such that $$0<d_{0}^{\bd{p}(k)}-c_{0}^{\bd{p}(k)}\leq M_0.$$
\end{prop}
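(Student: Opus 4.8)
The plan is to bound $d_0^{\bd{p}(k)} - c_0^{\bd{p}(k)}$ from above by constructing, for each $k$, an \emph{explicit} path in $\MH_0^{\bd{p}(k)}$ whose maximum value of $I_0^{\bd{p}(k)}$ exceeds $c_0^{\bd{p}(k)}$ by an amount that does not grow with $k$. Recall $c_0^{\bd{p}(k)} = k\cdot c_0$ by Lemma \ref{prop:2.2369852145}, and $I_0^{\bd{p}(k)}(u) = J_0^{\bd{p}(k)}(u+v_0) = \sum_{\bd{j}\in\T_0^{\bd{p}(k)}} S_{\bd{j}}(u+v_0)$. The key structural observation is that a periodic function $u\in\Lambda_0^{\bd{p}(1)}\subset\Lambda_0^{\bd{p}(k)}$ is already $(1,1,\dots,1)$-periodic, so $I_0^{\bd{p}(k)}(u) = k\cdot I_0^{\bd{p}(1)}(u)$ for such $u$; but a path realizing $d_0^{\bd{p}(1)}$ does \emph{not} directly give a path with bounded excess because the factor $k$ multiplies the excess too. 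So the honest construction must move the $k$ "columns" one at a time.

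First I would fix, via Theorem \ref{thm:2.1} applied with $\bd{p}=\bd{1}$, a path $g\in\MH_0^{\bd{1}}$ with $\max_{\theta}I_0^{\bd{1}}(g(\theta)) = d_0^{\bd{1}}$ (or at worst $\le d_0^{\bd{1}}+1$), where $g(0)=0$, $g(1)=w_0-v_0$, and $0\le g(\theta)\le w_0-v_0$ throughout. Now define a path $h$ in $\MH_0^{\bd{p}(k)}$ that performs $k$ successive "sweeps": in the $m$-th stage ($m=1,\dots,k$), one raises the value on the slab $\{\bd{i} : \bd{i}_1 \equiv m-1 \pmod k\}$ from $0$ to $w_0-v_0$ following the profile $g$, while the slabs with $\bd{i}_1\equiv 0,\dots,m-2$ are already at $w_0-v_0$ and the slabs with $\bd{i}_1 \equiv m,\dots,k-1$ are still at $0$. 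Concretely, at an interior time of stage $m$ the function on the $\bd{p}(k)$-torus equals $(w_0-v_0)$ on the first $m-1$ columns, $g(\theta)$ on column $m-1$, and $0$ on the remaining columns; this is continuous, lies in $\MG_0^{\bd{p}(k)}$, and connects $0$ to $w_0-v_0$.

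The main estimate — and the main obstacle — is to show $I_0^{\bd{p}(k)}$ along this path never exceeds $k c_0$ by more than a constant. Decompose $I_0^{\bd{p}(k)} = \sum_{\bd{j}\in\T_0^{\bd{p}(k)}} S_{\bd{j}}(u+v_0)$ into contributions from each $\bd{j}_1$-slab. For slabs that are entirely at value $0$ (i.e.\ equal to $v_0$) or entirely at $w_0-v_0$ (i.e.\ equal to $w_0$), the local sum $\sum_{\bd{j}:\bd{j}_1=\text{const}} S_{\bd{j}}$ contributes exactly $c_0$ per such slab \emph{provided} the neighboring slabs within distance $r$ also agree with the same $\MM_0$-solution — which fails only for the $O(r)$ slabs near the "moving front." Since $v_0,w_0$ are solutions with bounded action, Lemma \ref{lem:lc16321} controls the discrepancy on those finitely many transitional slabs by a constant $L\cdot(\text{number of affected sites})\le L'(v_0,w_0,r)$, and during stage $m$ the column carrying $g(\theta)$ contributes at most $d_0^{\bd{1}} + O(r)\cdot\|w_0-v_0\|_\infty\cdot L$ more than $c_0$. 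Collecting: at every time $I_0^{\bd{p}(k)}(h(\cdot)) \le k c_0 + M_0$ with $M_0 = M_0(v_0,w_0,r)$ independent of $k$. Hence $d_0^{\bd{p}(k)} \le k c_0 + M_0 = c_0^{\bd{p}(k)} + M_0$, and the strict lower bound $d_0^{\bd{p}(k)} > c_0^{\bd{p}(k)}$ is Proposition \ref{thm:2.1-1-99}. The delicate point to get right is the careful bookkeeping of exactly which slabs are "transitional" at each time and verifying that their number, hence their total excess contribution, stays bounded by a $k$-independent constant as the single moving front sweeps through — all other slabs must genuinely contribute the minimal value $c_0$.
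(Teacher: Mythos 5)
Your proposal is correct and takes essentially the same route as the paper: a column-by-column sweep (the paper's explicit $\chi_k$ moves outward from the center rather than from one end), so that at any moment only $O(r)$ slabs are transitional and Lemma~\ref{lem:lc16321} bounds their excess energy by a $k$-independent constant. One small caution: the paper uses a linear ramp $\phi_k(\theta,\bd{i}_1)\,[w_0(\bd{i})-v_0(\bd{i})]$ precisely so that $h_k$ is monotone in $\theta$ (i.e.\ $h_k\in\hat{\MH}_0^{\bd{p}(k)}$), a feature reused in the proof of Theorem~\ref{thm:3.1}; your use of the mountain-pass path $g$ is unnecessary for the bound and would sacrifice that monotonicity, so switch to linear interpolation if you intend to reuse the construction.
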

\begin{proof}
To estimate $d_{0}^{\bd{p}(k)}$ we need to construct a suitable $h_k\in \MH_{0}^{\bd{p}(k)}$.
To do so, we first define the following $\chi_k$ for $k\geq 2$.

If
$k$ is even,
set
$$
\chi_{k}(t, 0)=
\left\{
\begin{array}{ll}
{t,} & {0\leq t\leq 1}, \\
 {1,} & {1  \leq t \leq \frac{k+5}{2}}.
 \end{array}
 \right.
$$
For $1 \leq i \leq \frac{k}{2}-1$, set
$$
\chi_{k}(t, i)=\left\{
\begin{array}{ll}
{0,} & {0 \leq t \leq i+\frac{1}{2}}, \\
{2 t-1-2 i,} & {i+\frac{1}{2} \leq t \leq i+1}, \\
{1,} & {i+1 \leq t \leq \frac{k+5}{2}}.
\end{array}\right.
$$
Lastly
set
$$
\chi_{k}(t, \frac{k}{2})=\left\{\begin{array}{ll}
{0,} & {0 \leq t \leq \frac{k+1}{2}} ,\\
{1-\frac{1}{2}(\frac{k+5}{2}-t),} & {\frac{k+1}{2} \leq t\leq \frac{k+5}{2}},
\end{array}\right.
$$
and $\chi_k(t,i)=\chi_k(t,-i)$ for $-\frac{k}{2} < i<0$.
Extend $\chi_k(t, \cdot)$ as a $k$-periodic function on $\Z$.

If $k$ is odd, then $k-1$ is even and thus $\chi_{k-1}(t,i)$ is well-defined for $-\frac{k-1}{2}< i\leq \frac{k-1}{2}$ and $0\leq t \leq \frac{k+5}{2}$ by the previous paragraph.
Now define $\chi_{k}(t,i):=\chi_{k-1}(t,i)$ for $0\leq i\leq \frac{k-1}{2}$ and $0\leq t \leq \frac{k+5}{2}$.
Let $\chi_k(t,i):=\chi_k(t,-i)$ for $-\frac{k-1}{2}\leq i<0$ and
then
extend $\chi_k(t, \cdot)$ as a $k$-periodic function on $\Z$.

For the above $\chi_k$,
set
\begin{equation}\label{eq:phik}
  \phi_{k}(\theta, i)=\chi_{k}\left(\frac{\theta(k+5)}{2}, i\right).
\end{equation}
Letting
$
h_{k}(\theta)(\bd{i})=\phi_{k}(\theta, \bd{i}_{1})[w_{0}(\bd{i})-v_{0}(\bd{i})]
$
gives $h_{k} \in H_{0}^{\bd{p}(k)}$.
Notice that for $\theta\in [0,1]$, $h_{k}(\theta) \neq 0$, $w_{0}-v_{0}$
on $\{\bd{i}\in\Z^n\,|\, 0\leq \bd{i}_1\leq k,\,\bd{i}_2=\bd{i}_3=\cdots=\bd{i}_n=0\}$ for at most two points,
and for any $\bd{i}$, $h_k(\theta)(\bd{i})$ is monotone nondecreasing in $\theta$.
Thus we have
\begin{equation*}
\max _{\theta \in[0,1]} I^{\bd{p}(k)}_{0}\left(h_{k}(\theta)\right) -c_{0}^{\bd{p}(k)}\leq M_0
\end{equation*}
for some $M_0$ independent of $k$.
Hence for $k\geq 2$, $d_{0}^{\bd{p}(k)} -c_{0}^{\bd{p}(k)}\leq M_0$.
Enlarging $M_0$ if necessary shows that $d_{0}^{\bd{p}(k)} -c_{0}^{\bd{p}(k)}\leq M_0$ hold for all $k\in\N$,
thus Proposition \ref{prop:2.3} is proved.
\end{proof}

Now we prove the multiplicity of periodic mountain pass solutions.
\begin{thm}\label{thm:2.4}
The set $\{u_{k}^{*}\}_{k=1}^{\infty}$ is infinite.
\end{thm}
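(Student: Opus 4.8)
The plan is to argue by contradiction: suppose $\{u_k^*\}_{k=1}^\infty$ is finite. Each $u_k^*+v_0$ is a solution of \eqref{eq:PDE} lying strictly between $v_0$ and $w_0$, and it is $\bd{p}(k)$-periodic, i.e. $k$-periodic in $\bd{i}_1$ and $1$-periodic in the remaining coordinates. If the collection of functions $\{u_k^*\}$ is finite, then (since each $u_k^*$ has its own minimal period in $\bd{i}_1$ dividing $k$) only finitely many periods in $\bd{i}_1$ occur; in particular there is a fixed integer $P$ such that every $u_k^*$ is $P$-periodic in $\bd{i}_1$ (take $P$ to be the lcm of the finitely many periods). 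Consequently $u_k^* \in \MG_0^{\bd{p}(P)}$ for all $k$, and $I_0^{\bd{p}(k)}(u_k^*) = \frac{k}{P} I_0^{\bd{p}(P)}(u_k^*)$ by the additivity of the functional over fundamental domains (the same computation underlying Lemma \ref{prop:2.2369852145}). Since the set $\{I_0^{\bd{p}(P)}(u_k^*)\}$ is finite, let $a := \max_k I_0^{\bd{p}(P)}(u_k^*)$; then $a > c_0^{\bd{p}(P)} = P c_0$ because each $u_k^*+v_0$ is a solution distinct from $v_0,w_0$ and hence not a minimizer (Lemma \ref{prop:2.2369852145} forces minimizers to lie in $\MM_0$, but $v_0 < u_k^*+v_0 < w_0$). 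Therefore
\[
d_0^{\bd{p}(k)} - c_0^{\bd{p}(k)} = I_0^{\bd{p}(k)}(u_k^*) - k c_0 = \frac{k}{P}\big(I_0^{\bd{p}(P)}(u_k^*) - P c_0\big) \geq \frac{k}{P}\,(a' )
\]
for the smallest positive gap value $a' := \min_k (I_0^{\bd{p}(P)}(u_k^*) - Pc_0) > 0$; wait — I want a \emph{lower} bound growing in $k$, so I instead use that $I_0^{\bd{p}(P)}(u_k^*) - Pc_0 \geq a'$ with $a'>0$ fixed, giving $d_0^{\bd{p}(k)} - c_0^{\bd{p}(k)} \geq \frac{k}{P} a' \to \infty$ as $k\to\infty$.

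This contradicts Proposition \ref{prop:2.3}, which asserts $d_0^{\bd{p}(k)} - c_0^{\bd{p}(k)} \leq M_0$ for a constant $M_0$ independent of $k$. Hence $\{u_k^*\}_{k=1}^\infty$ cannot be finite, proving the theorem.

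The main point requiring care — the step I expect to be the real obstacle — is the claim that finiteness of the \emph{set of functions} $\{u_k^*\}$ implies a uniform bound $P$ on their periods in $\bd{i}_1$. This is actually immediate once one notes that a single fixed function $u$ which happens to be $\bd{p}(k)$-periodic for infinitely many $k$ is necessarily periodic with period equal to the gcd of those $k$'s (and in any case each $u_k^*$, being a specific element of $\R^{\Z^n}$, has a well-defined minimal $\bd{i}_1$-period, automatically finite), so a finite family has a common period $P$. The only genuinely substantive inputs are then: (i) the scaling identity $I_0^{\bd{p}(k)}(u) = \frac{k}{P} I_0^{\bd{p}(P)}(u)$ for $\bd{p}(P)$-periodic $u$, which follows by splitting the fundamental domain $\T_0^{\bd{p}(k)}$ into $k/P$ translates of $\T_0^{\bd{p}(P)}$ together with the translation-invariance of the $S_{\bd{j}}$; (ii) the strict inequality $I_0^{\bd{p}(P)}(u_k^*) > Pc_0$, which holds because $v_0 < u_k^*+v_0 < w_0$ (Theorem \ref{thm:2.1}) places $u_k^*+v_0$ outside $\MM_0$, while Lemma \ref{prop:2.2369852145} identifies the minimizers of $J_0^{\bd{p}(P)}$ with $\MM_0^{\bd{p}(P)} = \MM_0$; and (iii) Proposition \ref{prop:2.3} for the contradicting upper bound. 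Combining a growing lower bound with the uniform upper bound $M_0$ closes the argument.
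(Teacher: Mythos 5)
Your argument is correct and follows the same strategy as the paper: derive a contradiction with Proposition \ref{prop:2.3} by showing that finiteness of $\{u_k^*\}$ would force $d_0^{\bd{p}(k)}-c_0^{\bd{p}(k)}$ to grow linearly in $k$, using the scaling identity from Lemma \ref{prop:2.2369852145} and the strict gap from Proposition \ref{thm:2.1-1-99}. Your implementation via a common period $P$ (the lcm of the finitely many minimal $\bd{i}_1$-periods) and the identity $I_0^{\bd{p}(k)}(u_k^*)-kc_0 = \tfrac{k}{P}\bigl(I_0^{\bd{p}(P)}(u_k^*)-Pc_0\bigr)$ is a slightly cleaner packaging of the paper's choice of passing to a subsequence $lk_m$ with $u_{lk_m}^*=u_{k_j}^*$, but the underlying mechanism and the lemmas invoked are the same.
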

\begin{proof}
Suppose, by contradiction, $\{u_{k}^{*}\}_{k=1}^{\infty}=\{u_{k_i}^{*}\}_{i=1}^{m}$ and
$1=k_{1}<\cdots<k_{m}$. For $l \in \mathbb{N}$, %
we obtain 
$u_{l k_{m}}^{*}=u_{k_{j}}^{*}$ for some $j\in\{1, \dots, m\}$.
Hence
\begin{equation*}
\begin{split}
d_{0}^{\bd{p}(l k_{m})}=&I^{\bd{p}(l k_{m})}_{0}(u_{l k_{m}}^{*})=I^{\bd{p}(l k_{m})}_{0}(u_{k_{j}}^{*})\\
=&\gamma_{k_{m}} d_{0}^{\bd{p}(k_{j})} \geq \gamma_{k_{m}} \min _{1 \leq i \leq m} d_{0}^{\bd{p}(k_{i})}\\
\geq & l \min _{1 \leq i \leq m} d_{0}^{\bd{p}(k_{i})}.
\end{split}
\end{equation*}
Proposition \ref{thm:2.1-1-99} implies $d_{0}^{\bd{p}(k_{i})}>c_{0}^{\bd{p}(k_{i})}$, $1 \leq i \leq m$
and then $$\min _{1 \leq i \leq m} d_{0}^{\bd{p}(k_{i})}- \min_{1 \leq i \leq m}c_{0}^{\bd{p}(k_{i})}>0.$$
Thus
\begin{equation*}
\begin{split}
d_{0}^{\bd{p}(l k_{m})}-c_{0}^{\bd{p}(l k_{m})} \geq &\gamma_{k_{m}} \min _{1 \leq i \leq m} d_{0}^{\bd{p}(k_{i})} -  lc_{0}^{\bd{p}(k_m)} \\
\geq&\gamma_{k_{m}} \min _{1 \leq i \leq m} d_{0}^{\bd{p}(k_{i})} -  l\min_{1 \leq i \leq m}c_{0}^{\bd{p}(k_{i})}\\
\geq &l\left( \min _{1 \leq i \leq m} d_{0}^{\bd{p}(k_{i})}- \min_{1 \leq i \leq m}c_{0}^{\bd{p}(k_{i})}\right)\\
\to& \infty
\end{split}
\end{equation*}
as $l\to \infty$,
contrary to Proposition \ref{prop:2.3}.
\end{proof}

If $\bd{p}(k)=(k,1, \cdots, 1)$ is replaced by $(1,\cdots, k,\cdots, 1)$,
similar arguments of this section give more periodic mountain pass solutions.
What happens if we change the coordinate systems $(\bd{e}_1,\cdots, \bd{e}_n)$?
we learn from \cite[Lemma 5.4]{LC} that if $\omega_i=\sum_{j=1}^{n}\alpha_{ij}\bd{e}_j$ with $\alpha_{ij}\in\Z$ and
the vectors $\omega_i$ are linearly independent, then we do not obtain more periodic minimal and Birkhoff solutions.
Different with minimal and Birkhoff solutions, changing coordinate systems produces more periodic mountain pass solutions.
To see this,
for simplicity set $\omega_1=\bd{e}_1+\bd{e}_2$, $\omega_2=-\bd{e}_1+\bd{e}_2$, and $\omega_j=\bd{e}_j$ for $j=3,\cdots,n$.
Set $\bd{p}(k,i)=(1,\cdots,1)+(k-1)\bd{e}_i$.
Denote by $\MS$ the set of critical points of $I_{0}^{\bd{p}(k,i)}$ on $\Lambda_{0}^{\bd{p}(k,i)}$ in the
coordinate sysetems $(\bd{e}_1,\cdots,\bd{e}_n)$.
Let $u_{k}^{*}$ be the critical point that are $k$-periodic in $\omega_1$
and $1$-periodic in $\omega_2, \cdots, \omega_n$. Assume $u_{k}^{*}\not\in \R^{\Z^n/\Z^n}$.
Notice that by Proposition \ref{prop:2.3} and Theorem \ref{thm:2.4} there are infinitely many functions of this type.
We have:
\begin{prop}
$u^* _k\not\in\MS$.
\end{prop}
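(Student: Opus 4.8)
The plan is to derive a contradiction from the assumption that $u^*_k \in \MS$, i.e., that $u^*_k$ — which by construction is $k$-periodic in $\omega_1 = \bd{e}_1 + \bd{e}_2$ and $1$-periodic in $\omega_2, \dots, \omega_n$ — coincides with a critical point of $I_0^{\bd{p}(k,i)}$ for some $i$ in the \emph{standard} coordinate system. First I would observe that a function lying in $\Lambda_0^{\bd{p}(k,i)}$ for the standard lattice is $1$-periodic in all of $\bd{e}_1, \dots, \bd{e}_n$ except in the direction $\bd{e}_i$, where it is $k$-periodic. So if $u^*_k \in \MS$ then $u^*_k$ simultaneously satisfies: periodicity with respect to the sublattice generated by $k\omega_1, \omega_2, \dots, \omega_n$ (equivalently, by $k(\bd{e}_1+\bd{e}_2)$, $-\bd{e}_1+\bd{e}_2$, $\bd{e}_3, \dots, \bd{e}_n$), and periodicity with respect to the sublattice generated by $\bd{e}_1, \dots, \bd{e}_{i-1}, k\bd{e}_i, \bd{e}_{i+1}, \dots, \bd{e}_n$.

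The key step is a lattice computation: the group of periods of $u^*_k$ contains the subgroup $L$ of $\Z^n$ generated by both sublattices above, and I would show that $L = \Z^n$ itself (or at least that $L$ contains $\bd{e}_1$ and $\bd{e}_2$, together with $\bd{e}_3, \dots, \bd{e}_n$, hence all of $\Z^n$). For instance, the first sublattice contains $\omega_1 + \omega_2 = 2\bd{e}_2$ and $\omega_1 - \omega_2 = 2\bd{e}_1$; combining with the vector $k\bd{e}_1$ or $k\bd{e}_2$ from the second sublattice and using that $\gcd(2,k)$ divides small combinations — more carefully, using $-\bd{e}_1 + \bd{e}_2$ (which is $\omega_2$ itself, already a period) together with an appropriate $\Z$-combination that isolates $\bd{e}_1$ — one forces $\bd{e}_1 \in L$ and then $\bd{e}_2 \in L$. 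Since $\bd{e}_3, \dots, \bd{e}_n$ are visibly in $L$, we get $L = \Z^n$, so $u^*_k$ is $1$-periodic in every standard direction, i.e. $u^*_k \in \R^{\Z^n/\Z^n}$. This contradicts the standing assumption $u^*_k \notin \R^{\Z^n/\Z^n}$.

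I would organize the proof as: (1) spell out explicitly the two period sublattices of $u^*_k$ coming from the two hypotheses; (2) prove the arithmetic lemma that their sum is all of $\Z^n$ — this is the heart of the matter and where one must be slightly careful about whether $k$ is even or odd and about which index $i$ is chosen, but in every case the presence of both $\omega_2 = -\bd{e}_1 + \bd{e}_2$ (a genuine period) and a vector of the form $k\bd{e}_j$ with $k\omega_1 = k\bd{e}_1 + k\bd{e}_2$ a period lets one extract $\bd{e}_1$ and $\bd{e}_2$; (3) conclude $u^*_k \in \R^{\Z^n/\Z^n}$, contradicting the hypothesis.

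The main obstacle is purely the lattice arithmetic in step (2): one must verify that no parity or divisibility obstruction prevents the two sublattices from generating $\Z^n$. The safest route is to note that $k\omega_1$ and $\omega_2$ being periods gives $k(\bd{e}_1+\bd{e}_2)$ and $-\bd{e}_1+\bd{e}_2$ as periods, hence $k(\bd{e}_1 + \bd{e}_2) + k(-\bd{e}_1+\bd{e}_2) = 2k\bd{e}_2$ and $k(\bd{e}_1+\bd{e}_2) - k(-\bd{e}_1+\bd{e}_2) = 2k\bd{e}_1$ are periods; combined with $k\bd{e}_i$ (for $i=1$ or $2$) from the standard sublattice and $\bd{e}_1,\dots,\bd{e}_{i-1},\bd{e}_{i+1},\dots$, one checks directly that $\bd{e}_1, \bd{e}_2 \in L$. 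If instead $i \notin \{1,2\}$ the standard sublattice already contains $\bd{e}_1$ and $\bd{e}_2$ outright and the conclusion is immediate. Either way $L = \Z^n$, which is the desired contradiction.
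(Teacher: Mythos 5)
Your proof is correct and takes essentially the same approach as the paper's: both hinge on the fact that $\omega_2 = -\bd{e}_1 + \bd{e}_2$ is already a genuine period of $u^*_k$, so for any $u\in\MS$ with which $u^*_k$ could coincide, the standard periodicities of $u$ supply either $\bd{e}_1$ or $\bd{e}_2$ as a period, and combining that with $\omega_2$ immediately yields the other one, forcing $u^*_k\in\R^{\Z^n/\Z^n}$ -- contradiction. One caution on your write-up: the ``for instance'' computation asserts that $\omega_1+\omega_2=2\bd{e}_2$ and $\omega_1-\omega_2=2\bd{e}_1$ lie in the period lattice, but $\omega_1$ itself is \emph{not} a period of $u^*_k$ (only $k\omega_1$ is), so those vectors need not be periods; likewise the later detour through $2k\bd{e}_1$ and $2k\bd{e}_2$ is unnecessary. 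The clean version -- which your ``more carefully'' aside identifies and which is exactly the paper's case split on the index $p$ carrying the long period -- uses only $\omega_2$ together with whichever of $\bd{e}_1,\bd{e}_2$ the standard sublattice supplies, giving $\bd{e}_1=\bd{e}_2-\omega_2$ or $\bd{e}_2=\bd{e}_1+\omega_2$ directly.
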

\begin{proof}
Suppose, by contradiction, %
$u^* _k=u$ for some $u\in\MS$.
Since $u\in\MS$, there is some $j\in\N$ such that
\begin{equation}\label{eq:240930}
\left\{
  \begin{array}{ll}
    u\left(\bd{i}+j \bd{e}_{p}\right)=u(\bd{i}), & \,  \\
    u(\bd{i}+ \bd{e}_{i})=u(\bd{i}), & \hbox{for all $i\neq p$.}
  \end{array}
\right.
\end{equation}

\begin{itemize}
  \item If
$p\neq 1,2$, then
\begin{equation}\label{eq:222200}
u(\bd{i}+\bd{e}_{p})=u_{k}^{*}(\bd{i}+\bd{e}_{p})=u_{k}^{*}(\bd{i}+\omega_{p})=u_{k}^{*}(\bd{i})=u(\bd{i}),
\end{equation}
Combining \eqref{eq:240930} and \eqref{eq:222200} gives $u\in\R^{\Z^n/\Z^n}$.
But this contradicts $u=u_{k}^{*}\not\in\R^{\Z^n/\Z^n}$.
\item If
$p = 2$, then
\begin{equation*}
\begin{split}
u(\bd{i}+\bd{e}_{2})=&u(\bd{i}+\bd{e}_{2}-\bd{e}_{1})=u_{k}^{*}(\bd{i}+\bd{e}_{2}-\bd{e}_{1})\\
=&u_{k}^{*}(\bd{i}+\bd{\xi}_2)=u_{k}^{*}(\bd{i})=u(\bd{i}).
\end{split}
\end{equation*}
Again $u\in\R^{\Z^n/\Z^n}$, a contradiction.
\item If $p = 1$, then
\begin{equation*}
\begin{split}
u(\bd{i}-\bd{e}_1)=&u(\bd{i}-\bd{e}_1+\bd{e}_2)=u(\bd{i}+\bd{\xi}_2)\\
=&u^{*}_{k}(\bd{i}+\bd{\xi}_2)=u^{*}_{k}(\bd{i})=u(\bd{i}),
\end{split}
\end{equation*}
again a contradiction.
\end{itemize}
\end{proof}

Certain classes of sets which consist of solutions of \eqref{eq:PDE} attracts researchers' attention.
When the elements in the set have good order property, the set becomes a foliation or lamination.
With the periodic mountain pass solutions in hand, one may wonder: is there a possibility that the periodic mountain pass solutions constitute a foliation or lamination?
Unfortunately, the answer is ``negative''.
(In fact, 
we construct some periodic solutions that are cross.
These periodic solutions are suspected to be mountain pass type.
Please see Remark \ref{rem:3.33000} below.)
To this end,
we need the following definition.
\begin{defi}\label{def:touch}
We say $u$ touches $v$ from below (resp. above) if $u\leq v$ (resp. $v\leq u$), $u\neq v$ and there exists $\bd{i}$ such that $u(\bd{i})=v(\bd{i})$.
We say $u$ intersects $v$ if there are $\bd{i}$, $\bd{j}\in\Z^n$ such that $[u(\bd{i})-v(\bd{i})][u(\bd{j})-v(\bd{j})]<0$.
\end{defi}
\begin{rem}
Note that if $u\leq v$ are solutions of \eqref{eq:PDE}, by Lemma \ref{lem:2.5miao} $u$ will not touch $v$ from below.
\end{rem}
Set
\begin{equation*}
 \begin{split}
   \hat{\MH}_{0}^{\bd{p}}=  \{h \in \MH^{\bd{p}}_{0} \,|\, &h \text { is monotone nondecreasing in } \theta, i.e.,  \\
     & h(\theta_1)\geq h(\theta_2) \textrm{ for  any  }\theta_1>\theta_2\}.
 \end{split}
\end{equation*}
and
$$
\hat{d}_{0}^{\bd{p}}=\inf _{h \in \hat{\MH}_{0}^{\bd{p}}} \max _{\theta \in[0,1]} I_{0}^{\bd{p}}(h(\theta)).
$$
Then $\hat{d}_{0}^{\bd{p}} \geq d_{0}^{\bd{p}}$.
Similar to Theorem \ref{thm:2.1}, $\hat{d}_{0}^{\bd{p}}$ is a critical
value of $I_{0}^{\bd{p}}$ with a corresponding critical point in $\MG^{\bd{p}}_{0}$. %
By Proposition \ref{prop:comparison} and Lemma \ref{lem:2.2}, $\Phi_{t}^{0}(\hat{\MH}_{0}^{\bd{p}}) \subset \hat{\MH}_{0}^{\bd{p}}$, where $\Phi_{t}^{0}: \Lambda^{\bd{p}(k)}_{0}\to \Lambda^{\bd{p}(k)}_{0}$ is defined as in \eqref{eq:dela7}.
For $\bd{p}(k)=(k,1,\cdots,1)\in\N^{n}$,
set $h\in \hat{\MH}_{0}^{\bd{p}(k)}$. For $t>0$, define
\begin{equation*}
\begin{split}
&\underline{\theta}_{t}:=\sup \{\theta \in[0,1] \,|\, \Phi_{t}^{0} h(\theta) \leq u_{0}, \textrm{ but } \Phi_{t}^{0} h(\theta) \neq u_{0}\} \\
\leq& \overline{\theta}_{t}:=\inf \{\theta \in[0,1] \,|\, \Phi_{t}^{0} h(\theta) \geq u_{0}, \textrm{ but } \Phi_{t}^{0} h(\theta) \neq u_{0}\},
\end{split}
\end{equation*}
where 
$u_0\in \Lambda_{0}^{\bd{p}(1)}$ satisfying $0<u_0<w_0-v_0$ and $u_0+v_0$ is a solution of \eqref{eq:PDE}. %
By the periodicity of $u_0$ and $\Phi_{t}^{0} h(\theta)$,
$\Phi_{t}^{0} h(\theta)\leq u_0$ means
\begin{enumerate}[(a)]
\item \label{eq:0lf} $\Phi_{t}^{0} h(\theta)<u_0$, or
\item $\Phi_{t}^{0} h(\theta)$ touches $u_0$ from below, or
\item $\Phi_{t}^{0} h(\theta)=u_0$.
\end{enumerate}
For $\theta=\underline{\theta}_{t}$, \eqref{eq:0lf} will not hold.
We need the following lemma.
\begin{lem}\label{lem:003e}
For $\theta<\underline{\theta}_{t}$,
either $\Phi_t ^0 h(\theta)<u_0$ or $\Phi_t ^0 h(\theta)$ touches $u_0$ from below.
\end{lem}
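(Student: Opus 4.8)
The plan is to combine the monotonicity of $h$ in $\theta$ with the strict comparison principle of Proposition~\ref{prop:comparison}. Write $A_t:=\{\theta\in[0,1]\,|\,\Phi_t^0 h(\theta)\leq u_0,\ \Phi_t^0 h(\theta)\neq u_0\}$, so that $\underline{\theta}_t=\sup A_t$. First I would note that $A_t\neq\emptyset$, so that $\underline{\theta}_t$ is genuinely defined: since $h(0)=0<u_0$ and $u_0+v_0$ solves \eqref{eq:PDE}, hence $\Phi_t^0 u_0=u_0$ by Lemma~\ref{lem:2.2}~\eqref{lem:2.2-1}, Proposition~\ref{prop:comparison} gives $\Phi_t^0 h(0)<u_0$, so that $0\in A_t$.

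Next, I would fix $\theta<\underline{\theta}_t$. As $\theta$ is not an upper bound of $A_t$, there is $\theta'\in A_t$ with $\theta<\theta'$, and monotonicity of $h$ gives $h(\theta)\leq h(\theta')$. Now I would split into two cases. If $h(\theta)=h(\theta')$, then $\Phi_t^0 h(\theta)=\Phi_t^0 h(\theta')$, which satisfies $\Phi_t^0 h(\theta')\leq u_0$ and $\Phi_t^0 h(\theta')\neq u_0$ because $\theta'\in A_t$; but an inequality of the form $v\leq u_0$ with $v\neq u_0$ says precisely that $v<u_0$ or that $v$ touches $u_0$ from below in the sense of Definition~\ref{def:touch}, which is exactly the desired conclusion. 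If instead $h(\theta)\neq h(\theta')$, then $h(\theta)\leq h(\theta')$ with $h(\theta)\neq h(\theta')$, so Proposition~\ref{prop:comparison} applies and yields $\Phi_t^0 h(\theta)<\Phi_t^0 h(\theta')$; combined with $\Phi_t^0 h(\theta')\leq u_0$ this forces $\Phi_t^0 h(\theta)(\bd{i})<u_0(\bd{i})$ for every $\bd{i}$, i.e.\ $\Phi_t^0 h(\theta)<u_0$.

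The only step that will need a little care is the borderline case $h(\theta)=h(\theta')$, where the strict conclusion of Proposition~\ref{prop:comparison} is unavailable; but there equality of the two flow values is immediate and the conclusion is read off directly from $\theta'\in A_t$. Apart from this I do not anticipate any real obstacle — once the defining supremum $\underline{\theta}_t$ is unwound, the argument is short, and it uses nothing beyond the comparison principle, the monotonicity built into $\hat{\MH}_0^{\bd{p}(k)}$, and the fact that $u_0$ is a stationary point of the flow.
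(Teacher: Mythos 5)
Your proof is correct. It uses the same basic ingredients as the paper's proof (monotonicity of $h$ in $\theta$, Proposition~\ref{prop:comparison}, and the fact that $u_0$ is stationary for the flow), but the pivot is slightly different: the paper compares $h(\theta)$ directly against $h(\underline{\theta}_t)$, which first requires observing that $\Phi_t^0 h(\underline{\theta}_t)\leq u_0$ (a continuity/attainment fact used implicitly) and then running a three-way case analysis in which the subcase $h(\theta)=h(\underline{\theta}_t)$ with $\Phi_t^0 h(\underline{\theta}_t)=u_0$ has to be ruled out by a separate contradiction argument. You instead pick an arbitrary $\theta'\in A_t$ with $\theta<\theta'$, which exists by the definition of a supremum and automatically carries the information $\Phi_t^0 h(\theta')\leq u_0$ with $\Phi_t^0 h(\theta')\neq u_0$. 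This both makes the nonemptiness of $A_t$ a natural preliminary and eliminates the troublesome subcase entirely: in the borderline case $h(\theta)=h(\theta')$ the desired conclusion is read off immediately from $\theta'\in A_t$, and otherwise strict comparison gives $\Phi_t^0 h(\theta)<u_0$. The gain is modest but real — your variant does not lean on $\underline{\theta}_t$ being a limit of points of $A_t$ at which the inequality persists, so the proof is a bit cleaner and more self-contained.
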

\begin{proof}
%
For $\theta<\underline{\theta}_{t}$,
$h(\theta)\leq h(\underline{\theta}_{t})$ and then
$\Phi_{t}^{0} h(\theta)\leq \Phi_{t}^{0} h(\underline{\theta}_{t})\leq u_{0}$. We have the following three cases.
\begin{enumerate}
  \item Assume $h(\theta)< h(\underline{\theta}_{t})$. Then $\Phi_t ^0 h(\theta)< \Phi_t ^0 h(\underline{\theta}_{t})\leq u_0$ and we are throuth.
  \item Assume $h(\theta)$ touches $h(\underline{\theta}_{t})$ from below. Then $\Phi_t ^0 h(\theta)< \Phi_t ^0 h(\underline{\theta}_{t})\leq u_0$ and we are throuth.
  \item Assume $h(\theta)= h(\underline{\theta}_{t})$. Then $\Phi_t ^0 h(\theta)= \Phi_t ^0 h(\underline{\theta}_{t})\leq u_0$.
  \begin{enumerate}
    \item If $\Phi_t ^0 h(\underline{\theta}_{t})=u_0$, then so is $\Phi_t ^0 h(\theta)$, contradicting the definition of $\underline{\theta}_{t}$.
    \item If $\Phi_t ^0 h(\underline{\theta}_{t})$ touches $u_0$ from below, then so is $\Phi_t ^0 h(\theta)$.
  \end{enumerate}
\end{enumerate}
Thus either $\Phi_t ^0 h(\theta)<u_0$ or $\Phi_t ^0 h(\theta)$ touches $u_0$ from below.
\end{proof}
Since $h$ is continous, $\underline{\theta}_{t}$ is attained.
Similar results hold for $\overline{\theta}_{t}$.
Moreover,
$\underline{\theta}_{t}, \overline{\theta}_{t}$ have the following monotone property.
\begin{prop}\label{prop:3.2}
Assume $h\in \hat{\MH}_{0}^{\bd{p}(k)}$.
\begin{enumerate}
  \item \label{prop:3.2-1} If $h\neq u_0$ for all $\theta\in [0,1]$, then the map $t \mapsto \underline{\theta}_{t}$ (resp. $t \mapsto \overline{\theta}_{t}$) is monotone nondecreasing (monotone non-increasing).
  \item \label{prop:3.2-2} Assume there is some $\theta_0\in (0,1)$ such that $h(\theta)=u_0$.
   If such $\theta_0$ is unique, then $\underline{\theta}_t =\theta_0=\overline{\theta}_t$ for all $t>0$.
   If $h(\theta)=u_0$ for $\theta\in [a,b]\subset(0,1)$ and $[a,b]$ is the maximal interval owning this property, then $\underline{\theta}_t=a$ and $\overline{\theta}_t=b$.
\end{enumerate}
\end{prop}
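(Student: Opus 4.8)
The plan is to exploit three ingredients: the semigroup identity $\Phi_{t}^{0}=\Phi_{t-s}^{0}\circ\Phi_{s}^{0}$ for $0\le s\le t$ (by autonomy of \eqref{eq:dela7} and uniqueness in the Cauchy-Lipschitz-Picard theorem), the fixed-point identity $\Phi_{t}^{0}u_{0}=u_{0}$ for all $t$ (Lemma \ref{lem:2.2} \eqref{lem:2.2-1}, since $u_{0}+v_{0}$ solves \eqref{eq:PDE}), and the strict comparison of Proposition \ref{prop:comparison}, which upgrades ``$a\le b$ and $a\ne b$'' to ``$\Phi_{t}^{0}a<\Phi_{t}^{0}b$'' for every $t>0$. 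Lemma \ref{lem:003e} and its ``from above'' counterpart for $\overline{\theta}_{t}$ will be used to pin down the sign of $\Phi_{t}^{0}h(\theta)$ just outside the interval $[\underline{\theta}_{t},\overline{\theta}_{t}]$.

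For \eqref{prop:3.2-1}, fix $0\le s<t$ and take any $\theta<\underline{\theta}_{s}$. By Lemma \ref{lem:003e}, $\Phi_{s}^{0}h(\theta)\le u_{0}$ and $\Phi_{s}^{0}h(\theta)\ne u_{0}$ (it either lies strictly below $u_{0}$ or touches $u_{0}$ from below --- in both cases the hypotheses of Proposition \ref{prop:comparison} are met). Using the semigroup identity and Proposition \ref{prop:comparison} with time $t-s>0$,
\[
\Phi_{t}^{0}h(\theta)=\Phi_{t-s}^{0}\bigl(\Phi_{s}^{0}h(\theta)\bigr)<\Phi_{t-s}^{0}u_{0}=u_{0},
\]
so $\theta$ lies in the set defining $\underline{\theta}_{t}$, whence $\theta\le\underline{\theta}_{t}$. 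Letting $\theta\uparrow\underline{\theta}_{s}$ gives $\underline{\theta}_{s}\le\underline{\theta}_{t}$. The statement for $\overline{\theta}_{t}$ is the mirror image: for $\theta>\overline{\theta}_{s}$ one has $\Phi_{s}^{0}h(\theta)\ge u_{0}$, $\ne u_{0}$, hence $\Phi_{t}^{0}h(\theta)=\Phi_{t-s}^{0}(\Phi_{s}^{0}h(\theta))>u_{0}$, so $\theta\ge\overline{\theta}_{t}$; letting $\theta\downarrow\overline{\theta}_{s}$ gives $\overline{\theta}_{t}\le\overline{\theta}_{s}$.

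For \eqref{prop:3.2-2}, first observe that $h(\theta)=u_{0}$ forces $\Phi_{t}^{0}h(\theta)=u_{0}$, so every point where $h=u_{0}$ is excluded from both defining sets. If $\theta_{0}\in(0,1)$ is the unique point with $h(\theta_{0})=u_{0}$, then monotonicity of $h$ gives $h(\theta)\le u_{0}$, $\ne u_{0}$ for $\theta<\theta_{0}$ and $h(\theta)\ge u_{0}$, $\ne u_{0}$ for $\theta>\theta_{0}$; Proposition \ref{prop:comparison} then forces, for every $t>0$, $\Phi_{t}^{0}h(\theta)<u_{0}$ when $\theta<\theta_{0}$ and $\Phi_{t}^{0}h(\theta)>u_{0}$ when $\theta>\theta_{0}$. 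Hence the set defining $\underline{\theta}_{t}$ is $[0,\theta_{0})$ and that defining $\overline{\theta}_{t}$ is $(\theta_{0},1]$, so $\underline{\theta}_{t}=\theta_{0}=\overline{\theta}_{t}$. If instead $\{\theta:h(\theta)=u_{0}\}$ equals the nondegenerate maximal interval $[a,b]\subset(0,1)$ (a genuine interval by monotonicity of $h$), the same dichotomy applied at the endpoints yields, for $t>0$, $\Phi_{t}^{0}h(\theta)<u_{0}$ for $\theta<a$, $\Phi_{t}^{0}h(\theta)=u_{0}$ for $\theta\in[a,b]$, and $\Phi_{t}^{0}h(\theta)>u_{0}$ for $\theta>b$; therefore $\underline{\theta}_{t}=a$ and $\overline{\theta}_{t}=b$.

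The computations are routine; the only delicate point --- already isolated in Lemma \ref{lem:003e} --- is that at time $s$ the trajectory $\Phi_{s}^{0}h(\theta)$ may merely touch $u_{0}$ rather than lie strictly below it, so no comparison at time $s$ alone suffices. It is precisely the strict comparison of Proposition \ref{prop:comparison} that, after the extra time $t-s>0$, pushes the trajectory strictly off the equilibrium $u_{0}$; apart from carefully tracking the three possibilities ``strictly below / touching / equal'', no real obstacle arises.
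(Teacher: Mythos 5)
Your proof is correct and follows essentially the same route as the paper's. For part \eqref{prop:3.2-1} you use exactly the same three ingredients the paper invokes (Lemma \ref{lem:003e} at time $s$ to get $\Phi_{s}^{0}h(\theta)\le u_{0}$, $\ne u_{0}$; the fixed-point identity $\Phi_{t}^{0}u_{0}=u_{0}$ from Lemma \ref{lem:2.2}~\eqref{lem:2.2-1}; and the strict comparison of Proposition \ref{prop:comparison} applied after time $t-s$), the only difference being that you state the semigroup property $\Phi_{t}^{0}=\Phi_{t-s}^{0}\circ\Phi_{s}^{0}$ explicitly where the paper leaves it implicit. For part \eqref{prop:3.2-2} the paper simply calls it obvious; your expansion --- that monotonicity of $h$ gives the strict sign on either side of $\theta_{0}$ (resp.\ $[a,b]$), Proposition \ref{prop:comparison} preserves that strict sign for all $t>0$, and the defining sets therefore coincide with $[0,\theta_{0})$ and $(\theta_{0},1]$ (resp.\ $[0,a)$ and $(b,1]$) --- is the correct reason. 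One small inaccuracy: ``fix $0\le s<t$'' should read $0<s<t$, since $\underline{\theta}_{t}$ and $\overline{\theta}_{t}$ are only defined for $t>0$; this does not affect the conclusion.
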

\begin{proof}
By the definition of $\underline{\theta}_t, \overline{\theta}_t$, Proposition \ref{prop:3.2} \eqref{prop:3.2-2} is obvious.
Thus we only prove Proposition \ref{prop:3.2} \eqref{prop:3.2-1}.
Assume $t_1< t_2$. For $\mu\in(0, \underline{\theta}_{t_1})$,
by Lemma \ref{lem:2.2} \eqref{lem:2.2-1}, Lemma \ref{lem:003e} and Proposition \ref{prop:comparison},
$\{u\in \MG^{\bd{p}(k)}_{0}\,|\, 0\leq u\leq u_0\}$ is invariant under $\Phi_{t}^{0}$ and
$$
\Phi_{t+t_{1}}^{0} h(\mu)<u_{0}, \quad \textrm{for }t>0.
$$
If we take
$t=t_2 -t_1$, then
$\mu< \underline{\theta}_{t_2}$.
Hence $\underline{\theta}_{t_1}\leq  \underline{\theta}_{t_2}$.
%
%
$\overline{\theta}_{t_1}\geq \overline{\theta}_{t_2}$ can be proved similarly.
\end{proof}
\begin{rem}
We suspect that in the case \eqref{prop:3.2-1} of Proposition \ref{prop:3.2},
 either $\underline{\theta}_{t}$ is strictly increasing or $\underline{\theta}_{t}\equiv \theta_0$ for some $\theta_0\in (0,1)$.
To support this, suppose $t_1<t_2$.
\begin{itemize}
  \item If $\Phi_{t_{1}}^{0} h(\underline{\theta}_{t_1})$ touches $u_{0}$ from below, then $\underline{\theta}_{t_1}<  \underline{\theta}_{t_2}$.
  Indeed, by Proposition \ref{prop:comparison},
$\Phi_{t+t_{1}}^{0} h(\underline{\theta}_{t_1})< u_{0}$
for $t>0$.
In particular, $\Phi_{t_{2}}^{0}(h(\underline{\theta}_{t_{1}}))<u_{0}$.
If $\underline{\theta}_{t_1}= \underline{\theta}_{t_2}$
then $u_{0}>\Phi_{t_{2}}^{0}(h(\underline{\theta}_{t_{2}}))$,
a contradiction.
  \item If $\Phi_{t_{1}}^{0} h(\underline{\theta}_{t_1})=u_{0}$, then $\underline{\theta}_{t_1}=\underline{\theta}_{t_2}$.
  Indeed, suppose, by contradiction $\underline{\theta}_{t_1}<\underline{\theta}_{t_2}$.
  Then by Lemma \ref{lem:003e}, either $\Phi_{t_2}^{0}h(\underline{\theta}_{t_1})<u_0$ or
  $\Phi_{t_2}^{0}h(\underline{\theta}_{t_1})$ touches $u_0$ from below.
  Both cases contradict $\Phi_{t_{1}}^{0} h(\underline{\theta}_{t_1})=u_{0}$.
\end{itemize}
But we do not know if there are $t_1<t_2$ such that $\Phi_{t_{1}}^{0} h(\underline{\theta}_{t_1})$ touches $u_{0}$ from below and $\Phi_{t_{2}}^{0} h(\underline{\theta}_{t_2})=u_{0}$.
\end{rem}

\begin{thm}\label{thm:3.1}
Let $u_0 \in \Lambda_{0}^{\bd{p}(1)}$ such that $u_0+v_0$ is a solution of \eqref{eq:PDE} and $0< u_0 < w_0-v_0 $. Then for
$k\in\N$ sufficiently large, there exists a $u_k\in \Lambda^{\bd{p}(k)}_{0}$ with $u_k+v_0$ is a solution of
\eqref{eq:PDE} and $u_k$ intersects $u_0$.
\end{thm}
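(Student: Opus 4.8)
The plan is to realise $u_k$ as a limit, under the semiflow $\Phi_{t}^{0}$, of a point on a nearly optimal \emph{monotone} mountain pass path for $I_{0}^{\bd{p}(k)}$ on $\Lambda_{0}^{\bd{p}(k)}$, chosen so large that the whole path stays strictly below the critical level $I_{0}^{\bd{p}(k)}(u_0)$. The starting point is a level comparison. On the one hand, $u_0+v_0$ solves \eqref{eq:PDE} with $v_0<u_0+v_0<w_0$, so since $v_0,w_0\in\MM_0$ are adjacent, $u_0+v_0\notin\MM_0$ and hence $J_0(u_0+v_0)>c_0$; because $u_0,v_0$ are $1$-periodic, $S_{\bd{j}}(u_0+v_0)=J_0(u_0+v_0)$ for every $\bd{j}$, whence $I_{0}^{\bd{p}(k)}(u_0)=\sum_{\bd{j}\in\T_{0}^{\bd{p}(k)}}S_{\bd{j}}(u_0+v_0)=k\,J_0(u_0+v_0)$, while $c_{0}^{\bd{p}(k)}=k\,c_0$ by Lemma~\ref{prop:2.2369852145}, so $I_{0}^{\bd{p}(k)}(u_0)-c_{0}^{\bd{p}(k)}=k\bigl(J_0(u_0+v_0)-c_0\bigr)\to\infty$. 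On the other hand, the comparison paths $h_k$ built in the proof of Proposition~\ref{prop:2.3} are monotone nondecreasing in $\theta$, so $h_k\in\hat{\MH}_{0}^{\bd{p}(k)}$ and $\hat{d}_{0}^{\bd{p}(k)}\le\max_{\theta}I_{0}^{\bd{p}(k)}(h_k(\theta))\le c_{0}^{\bd{p}(k)}+M_0$. Therefore $\hat{d}_{0}^{\bd{p}(k)}<I_{0}^{\bd{p}(k)}(u_0)$ for all $k$ large enough; I would fix such a $k$.

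Next I would choose $h\in\hat{\MH}_{0}^{\bd{p}(k)}$ with $\max_{\theta}I_{0}^{\bd{p}(k)}(h(\theta))<I_{0}^{\bd{p}(k)}(u_0)$. Since $I_{0}^{\bd{p}(k)}$ does not increase along $\Phi_{t}^{0}$ (Lemma~\ref{lem:2.2}\eqref{lem:2.2-2}), every $\Phi_{t}^{0}h(\theta)$ stays below that maximum, so $\Phi_{t}^{0}h(\theta)\ne u_0$ for all $t\ge0$ and all $\theta\in[0,1]$. Moreover $(I_{0}^{\bd{p}(k)})'(u_0)=0$ by \eqref{eq:derivative}, so $u_0$ is a fixed point of $\Phi_{t}^{0}$ (Lemma~\ref{lem:2.2}\eqref{lem:2.2-1}), and $\Phi_{t}^{0}\circ h\in\hat{\MH}_{0}^{\bd{p}(k)}$; thus the indices $\underline{\theta}_{t}\le\overline{\theta}_{t}$ of $\Phi_{t}^{0}\circ h$ relative to $u_0$ are well defined, and by Proposition~\ref{prop:3.2}\eqref{prop:3.2-1}, $t\mapsto\underline{\theta}_{t}$ is nondecreasing and $t\mapsto\overline{\theta}_{t}$ nonincreasing, with limits $\underline{\theta}_{*}\le\overline{\theta}_{*}$. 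I would then argue $\underline{\theta}_{t}<\underline{\theta}_{*}$ for every $t$: otherwise $\underline{\theta}_{t}\equiv\underline{\theta}_{*}$ for all large $t$, and since $\Phi_{t}^{0}h(\theta)\ne u_0$ for all $\theta$ the defining set of $\underline{\theta}_{t}$ equals $\{\theta:\Phi_{t}^{0}h(\theta)\le u_0\}$, which is closed, so $\Phi_{t}^{0}h(\underline{\theta}_{*})\le u_0$ and $\ne u_0$; then Proposition~\ref{prop:comparison} gives $\Phi_{t+1}^{0}h(\underline{\theta}_{*})<u_0$, and continuity together with finite-dimensionality of $\MG_{0}^{\bd{p}(k)}$ forces $\underline{\theta}_{t+1}>\underline{\theta}_{*}$, a contradiction; symmetrically $\overline{\theta}_{t}>\overline{\theta}_{*}$ for every $t$. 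Fixing any $\theta^{*}\in[\underline{\theta}_{*},\overline{\theta}_{*}]$ then gives $\underline{\theta}_{t}<\theta^{*}<\overline{\theta}_{t}$ for every $t>0$, so $\Phi_{t}^{0}h(\theta^{*})$ is neither $\le u_0$ nor $\ge u_0$, i.e.\ $\Phi_{t}^{0}h(\theta^{*})$ intersects $u_0$ in the sense of Definition~\ref{def:touch}, for every $t>0$.

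Finally I would apply Lemma~\ref{lem:2.2}\eqref{lem:2.2-4} to $h(\theta^{*})$: along a sequence $t_i\to\infty$, $\Phi_{t_i}^{0}h(\theta^{*})\to U$ in $\MG_{0}^{\bd{p}(k)}$, with $U+v_0$ a solution of \eqref{eq:PDE} and $I_{0}^{\bd{p}(k)}(U)\le\max_{\theta}I_{0}^{\bd{p}(k)}(h(\theta))<I_{0}^{\bd{p}(k)}(u_0)$, hence $U\ne u_0$. If $U\le u_0$, then $U+v_0$ and $u_0+v_0$ are solutions of \eqref{eq:PDE} with $U\ne u_0$, so Lemma~\ref{lem:2.5miao} forces $U<u_0$; but each $\Phi_{t_i}^{0}h(\theta^{*})$ exceeds $u_0$ at some site of the finite set $\T_{0}^{\bd{p}(k)}$, and along a subsequence that site is a fixed $\bd{k}_0$, so $U(\bd{k}_0)\ge u_0(\bd{k}_0)$, a contradiction. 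Hence $U\not\le u_0$, and symmetrically $U\not\ge u_0$, so $U$ intersects $u_0$; setting $u_k:=U\in\Lambda_{0}^{\bd{p}(k)}$ then completes the argument.

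I expect the main obstacle to be producing a single parameter $\theta^{*}$ that lies in \emph{every} crossing interval $(\underline{\theta}_{t},\overline{\theta}_{t})$, since this is exactly what makes $\lim_i\Phi_{t_i}^{0}h(\theta^{*})$ an honest solution rather than a limit along a drifting parameter. It rests on combining the monotonicity of $t\mapsto\underline{\theta}_{t}$ (Proposition~\ref{prop:3.2}) with the rigidity ``$u\le u_0,\ u\ne u_0\Rightarrow\Phi_{s}^{0}u<u_0$ for $s>0$'' (Proposition~\ref{prop:comparison}); a secondary subtlety, handled by compactness of the finite-dimensional $\MG_{0}^{\bd{p}(k)}$, is that the intersection property must be preserved in the limit.
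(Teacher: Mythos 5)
Your proof is correct, and its skeleton is the same as the paper's: pick the monotone comparison paths $h_k$ from Proposition~\ref{prop:2.3}, compare levels so that for $k$ large the maximum along $h_k$ lies strictly below $I_{0}^{\bd{p}(k)}(u_0)$, then flow the path by $\Phi_{t}^{0}$ and extract a critical point via Lemma~\ref{lem:2.2}\eqref{lem:2.2-4}. Where you diverge is in how the intersection with $u_0$ is established, and also in the level estimate.

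On the level estimate: the paper writes $I_{0}^{\bd{p}(k)}(u_0)-kc_0 = k\bigl(d_{0}^{\bd{p}(1)}-c_{0}^{\bd{p}(1)}\bigr)$, which tacitly treats $u_0$ as the $\bd{p}(1)$-mountain-pass critical point; your version — $u_0+v_0\notin\MM_0$ because it lies strictly between the adjacent minimizers $v_0,w_0$, hence $J_0(u_0+v_0)>c_0$, hence $I_{0}^{\bd{p}(k)}(u_0)-kc_0=k(J_0(u_0+v_0)-c_0)\to\infty$ — is cleaner and covers the general $u_0$ the theorem actually posits.

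On the intersection argument: the paper takes $u^-:=\lim_m\Phi_{t_m}^{0}h(\underline{\theta})$ (with $\underline{\theta}=\lim_t\underline{\theta}_t$) and shows by contradiction that neither $u^->u_0$ nor $u^-<u_0$ can hold, using the definitions of $\overline{\theta}_{t_m}$ and $\underline{\theta}_{t_m}$ respectively and the fact that $\Phi_{t_m}^{0}h(\underline{\theta}_{t_m})$ (resp.\ $\Phi_{t_m}^{0}h(\overline{\theta}_{t_m})$) touches $u_0$. You instead prove the strict inequalities $\underline{\theta}_t<\underline{\theta}_*$ and $\overline{\theta}_t>\overline{\theta}_*$ for every $t>0$ — an observation the paper records only in the remark after Proposition~\ref{prop:3.2}, not in this proof — so that any $\theta^*\in[\underline{\theta}_*,\overline{\theta}_*]$ satisfies $\underline{\theta}_t<\theta^*<\overline{\theta}_t$ for all $t$, whence $\Phi_t^0h(\theta^*)$ intersects $u_0$ for every $t$, and the intersection passes to the limit by finiteness of $\T_0^{\bd{p}(k)}$ and Lemma~\ref{lem:2.5miao}. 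Both arguments are valid and rest on the same ingredients (Proposition~\ref{prop:comparison}, Proposition~\ref{prop:3.2}, Lemma~\ref{lem:2.2}, compactness of $\MG_0^{\bd{p}(k)}$); yours front-loads a little extra work to get the persistence of the crossing interval, while the paper defers everything to a final two-case contradiction. Taking $\theta^*=\underline{\theta}_*$, in fact you and the paper are extracting the same limit point.
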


\begin{proof}
By Proposition \ref{prop:2.3} and its proof, for $k\in\N$ there are
$h_{\bd{p}(k)}\in\hat{\MH}_{0}^{\bd{p}(k)}$ and $M_0>0$ such that
$$\max _{0 \leq \theta \leq 1} I^{\bd{p}(k)}_{0}\left(h_{\bd{p}(k)}(\theta)\right)-kc_0\leq M_0.$$
By Proposition \ref{thm:2.1-1-99},
$d_{0}^{\bd{p}(1)}>c_{0}^{\bd{p}(1)}=c_0$.
Then for large $k$,
\begin{equation}\label{eq:3.2}
M_0<k(d_{0}^{\bd{p}(1)}-c_{0}^{\bd{p}(1)})=I^{\bd{p}(k)}_{0}\left(u_{0}\right)-kc_0.
\end{equation}
Since $\underline{\theta}_t, \overline{\theta}_t$ are monotone, we have
$$
\lim _{t \rightarrow \infty} \underline{\theta}_{t}=:\underline{\theta} \leq \overline{\theta}:=\lim _{t \rightarrow \infty} \overline{\theta}_{t}.
$$
Lemma \ref{lem:2.2} \eqref{lem:2.2-4} shows that there are a $u^{-}\in\Lambda_{0}^{\bd{p}(k)}$ and a sequence
$t_m \to\infty$ as $m\to\infty$ such that
\begin{equation*}
  u^{-}=\lim _{m \rightarrow \infty} \Phi_{t_{m}}^{0}(h_{\bd{p}(k)}(\underline{\theta})) %
\end{equation*}
and $u^- +v_0$ is a solution of \eqref{eq:PDE}.
Note that
by \eqref{eq:3.2} and Lemma \ref{lem:2.2} \eqref{lem:2.2-2}, 
\begin{equation}\label{eq:***}
\begin{split}
I^{\bd{p}(k)}_{0}(u^-)-kc_0
=&\lim_{m\to \infty}I^{\bd{p}(k)}_{0}(\Phi_{t_m}^{0}(h_{\bd{p}(k)}(\underline{\theta}))) -kc_0\\
\leq &I^{\bd{p}(k)}_{0}(h_{\bd{p}(k)}(\underline{\theta}))-kc_0\\
\leq &M_{0}\\
<& I^{\bd{p}(k)}_{0}(u_0) -kc_0,
\end{split}
\end{equation}
thus $I^{\bd{p}(k)}_{0}(u^-)< I^{\bd{p}(k)}_{0}(u_0)$ and then $u^{-}\neq u_0$.
We claim that $u^-$ intersects $u_0$.
Noticing that $u^- +v_0, u_0 +v_0$ are solutions of \eqref{eq:PDE}, if $u^-$ does not intersect $u_0$,
then by Lemma \ref{lem:2.5miao} either (i)
$u^->u_0$ or (ii) $u^-<u_0$.
If (i) holds,
by the definition of $\overline{\theta}_{t_m}$,
there is some $\bd{i}\in \Z^n$, such that $\Phi_{t_m}^{0}(h_{\bd{p}(k)}(\overline{\theta}))(\bd{i})\leq \Phi_{t_m}^{0}(h_{\bd{p}(k)}(\overline{\theta}_{t_m}))(\bd{i})= u_0(\bd{i})$.
But
by the periodicity of $\Phi_{t_{m}}^{0}(h_{\bd{p}(k)}(\underline{\theta}))$ and $u_0$, for $m$ large enough,
\begin{equation*} 
  \Phi_{t_{m}}^{0}(h_{\bd{p}(k)}(\overline{\theta})) \geq \Phi_{t_{m}}^{0}(h_{\bd{p}(k)}(\underline{\theta}))>u_{0},
\end{equation*}
a contradiction.
If (ii) is satisfied,
by the definition of $\underline{\theta}_{t_m}$, there exists $\bd{i}\in\Z^n$ such that
$\Phi_{t_{m}}^{0}(h_{\bd{p}(k)}(\underline{\theta}_{t_{m}}))(\bd{i})=u_{0}(\bd{i})$.
But
$$\Phi_{t_{m}}^{0}\left(h_{\bd{p}(k)}\left(\underline{\theta}_{t_{m}}\right)\right)\leq  \Phi_{t_{m}}^{0}\left(h_{\bd{p}(k)}\left(\underline{\theta}\right)\right)<u_{0}$$
for large $m$,
again a contradiction.
Thus Theorem \ref{thm:3.1} is proved by setting $u_k=u^{-}$.
\end{proof}

\begin{rmk}\label{rem:3.33000}
In the proof of Theorem \ref{thm:3.1},
one can obtain another solution, say $u^{+}+v_0$ by taking limit for a subsequence of $\Phi_{t}^{0}(h_{\bd{p}(k)}(\overline{\theta}))+v_0$.
Of course, it may happen that $u^{+}=u^{-}$.
We cannot figure out whether $u^-$ (or $u^+$) is a mountain pass critical point from the proof of Theorem \ref{thm:3.1},
but please see \cite[Remark 3.3]{Rabi2007} for more discussions.
\end{rmk}

Since $\{u_k\}_{k=1}^{\infty}$ obtained by Theorem \ref{thm:3.1} are lying in the gap of $0,w_0-v_0$,
by Lemma \ref{lem:subsequ}
we can extract a subsequence converging to a function $u\in\hat{\Gamma}(v_0,w_0)$ such that $u+v_0$ is a solution of \eqref{eq:PDE}.
\begin{cor}
There is a subsequence of $\tau_{-j_{k}}^{1} u_{k}$
converging to a function $U\in \hat{\Gamma}(v_0,w_0)$ with $U(\bd{0})\geq u_0(\bd{0})$ and
$U+v_0$ is a solution of \eqref{eq:PDE}, where $u_0$ is as in Theorem \ref{thm:3.1}.
\end{cor}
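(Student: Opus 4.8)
The plan is to choose the translation amounts $j_k$ so that the shifted functions $\tau_{-j_k}^{1}u_k$ have a controlled value at the origin, and then to use the uniform bounds to extract a pointwise limit which inherits both the comparison inequalities and the equation.

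First I would recall the properties of the $u_k$ furnished by Theorem \ref{thm:3.1}: $u_k\in\Lambda_0^{\bd{p}(k)}$, $0<u_k<w_0-v_0$, the function $u_k+v_0$ solves \eqref{eq:PDE}, and $u_k$ intersects $u_0$ in the sense of Definition \ref{def:touch}. Since $u_0,v_0,w_0\in\R^{\Z^n/\Z^n}$ are periodic with period $1$ in every coordinate, they are constant on $\Z^n$; in particular $u_0(\bd{i})=u_0(\bd{0})$ for all $\bd{i}\in\Z^n$. Because $u_k$ intersects the constant $u_0$, there is $\bd{m}_k\in\Z^n$ with $u_k(\bd{m}_k)>u_0(\bd{0})$, and, using that $u_k$ is $1$-periodic in the coordinates $\bd{i}_2,\dots,\bd{i}_n$, we may take $\bd{m}_k=j_k\bd{e}_1$ for a suitable $j_k\in\Z$; then
\[
(\tau_{-j_k}^{1}u_k)(\bd{0})=u_k(j_k\bd{e}_1)=u_k(\bd{m}_k)>u_0(\bd{0}).
\]

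Next I would translate the whole picture. Since \eqref{eq:PDE} is invariant under $\tau_{-j_k}^{1}$ and $\tau_{-j_k}^{1}v_0=v_0$, the function $\tau_{-j_k}^{1}u_k+v_0=\tau_{-j_k}^{1}(u_k+v_0)$ again solves \eqref{eq:PDE}, and the (constant) bounds $0\le u_k\le w_0-v_0$ are preserved, so $v_0\le\tau_{-j_k}^{1}u_k+v_0\le w_0$ for every $k$. By Lemma \ref{lem:subsequ} some subsequence of $\tau_{-j_k}^{1}u_k$ converges pointwise to a function $U$ with $v_0\le U+v_0\le w_0$; moreover $U$ is $1$-periodic in $\bd{i}_2,\dots,\bd{i}_n$ since each $\tau_{-j_k}^{1}u_k$ is, whence $U\in\hat{\Gamma}(v_0,w_0)$.

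Finally I would pass to the limit. For each fixed $\bd{i}\in\Z^n$, the left-hand side of \eqref{eq:PDE} evaluated at any function $u$ depends only on the finitely many values $u(\bd{k})$ with $\norm{\bd{k}-\bd{i}}\le 2r$, and continuously so because $s\in C^2$; hence letting $k\to\infty$ along the chosen subsequence in the identities satisfied by $\tau_{-j_k}^{1}u_k+v_0$ shows that $U+v_0$ solves \eqref{eq:PDE}. Passing to the limit in $(\tau_{-j_k}^{1}u_k)(\bd{0})>u_0(\bd{0})$ then yields $U(\bd{0})\ge u_0(\bd{0})$, which finishes the proof. This is essentially a compactness argument and presents no real difficulty; the only step that requires a little care is the selection of $j_k$, where one uses the coordinate-wise $1$-periodicity of $u_0$ (making it constant) together with the transverse periodicity of $u_k$ to move a generic intersection point onto the $\bd{e}_1$-axis.
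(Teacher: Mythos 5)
Your argument is correct and essentially the same as the paper's: use Theorem \ref{thm:3.1} to pick $j_k$ making $\tau_{-j_k}^{1}u_k(\bd{0})>u_0(\bd{0})$, extract a pointwise limit $U$ via Lemma \ref{lem:subsequ} using the uniform bound $0\le u_k\le w_0-v_0$, and pass to the limit in the finite sum defining \eqref{eq:PDE}. You supply a few details the paper glosses over (that $u_0,v_0,w_0$ are constant as $1$-periodic functions, so an intersection point can be slid onto the $\bd{e}_1$-axis using the transverse periodicity of $u_k$, and that the left side of \eqref{eq:PDE} at a fixed $\bd{i}$ is a continuous function of finitely many values), but these are justifications, not a different route.
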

\begin{proof}
By Theorem \ref{thm:3.1},
without loss of generality $u_k$ intersects $u_0$ for all $k\in\N$.
Thus we have $\tau_{-j_{k}}^{1} u_{k}\left(\bd{0}\right)>u_{0}\left(\bd{0}\right)$
for some $j_k\in\Z$.
Let $U_{k}:=\tau^{1}_{-j_{k}} u_{k}$,
then $U_{k}+v_0$ are solutions of \eqref{eq:PDE}. Since $0 \leq U_k\leq w_0-v_0$, by Lemma \ref{lem:subsequ},
there is a function $U\in\R^{\Z\times \Z^{n-1}/\Z^{n-1}}$ such that $U_k \to U$ pointwise (up to a subsequence).
So $U(\bd{0})\geq u_{0}(\bd{0})$
and $U+v_0$ is a solution of \eqref{eq:PDE}.
\end{proof}

Since $U+v_0\in \hat{\Gamma}(v_0,w_0)$, either $J_1(U+v_0)=\infty$ or
$J_1(U+v_0)<\infty$.
\begin{enumerate}
  \item \label{item:99} Suppose $J_1(U+v_0)=\infty$. Then the solution $U+v_0$ is different from any known solutions.
For instance, $U+v_0$ is not a minimal and Birkhoff solution in \cite{Miao, LC},
and is not a multitransition solution in \cite{LC1}
since
the functional $J_1$ at any of
these solutions is finite.

  \item \label{item:100} Now assume $J_1(U+v_0)<\infty$.
By
Lemma \ref{lem:6553}, there are
$\phi,\psi\in\{v_0,w_0\}$ satisfying
\begin{equation}\label{eq:0011092}
\begin{aligned}
|(U+v_0-\phi)(\bd{T}_{i})| \rightarrow 0, &\quad i \rightarrow-\infty ; \\
|(U+v_0-\psi)(\bd{T}_{i})| \rightarrow 0, &\quad i \rightarrow \infty.
\end{aligned}
\end{equation}
\begin{enumerate}
\item \label{item:100-1} If $\phi=\psi$, then $U+v_0$ is a homoclinic solution. 
\item \label{item:100-2} Otherwise suppose $\phi\neq \psi$ and assume there is no subsequence of $\tau_{-j_{k}}^{1} u_{k}+v_0$
converging to homoclinic solution.
Without loss of generality, set $\phi=v_0$ and $\psi=w_0$.
Then $U+v_0$ is a heteroclinic solution.
In this case, it is  interesting that we can construct another solution of \eqref{eq:PDE}. 
Indeed,
for $\epsilon>0$, by \eqref{eq:0011092} there exists $i_0(\epsilon)\in\N$ satisfying %
\begin{equation}\label{eq:4.7}
\begin{aligned}
\left|(U+v_0-v_{0})(\bd{T}_{i})\right|  \leq \epsilon, &\quad i \leq-i_{0}(\epsilon), \\
\left|(U+v_0-w_{0})(\bd{T}_{i})\right| \leq \epsilon, & \quad i \geq i_{0}(\epsilon).
\end{aligned}
\end{equation}
Since $U_k\to U$ pointwise
as $k\to\infty$, there is a $k_0=k_0(\epsilon)$ such that
for $k\geq k_0$,
\begin{equation*}
\begin{aligned}
\left|(U_{k}+v_0 -v_{0})(\bd{T}_{-i_{0}(\epsilon)})\right|& \leq 2 \epsilon,\\
\left|(U_{k}+v_0-w_{0})(\bd{T}_{i_{0}(\epsilon)})\right|&\leq 2 \epsilon.
\end{aligned}
\end{equation*}
Thus we have
$U_k(\bd{T}_{-i_0(\epsilon)}) <u_{0}(\bd{T}_{-i_0(\epsilon)})$ and
$u_{0}(\bd{T}_{i_0(\epsilon)})<U_k(\bd{T}_{i_0(\epsilon)})$ provided $\epsilon$ sufficiently small.
But noticing that
$U_k$ is $k$-periodic in $\bd{i}_1$, we obtain
$$
\left|(U_{k}+v_0-v_{0})(\bd{T}_{k-i_{0}(\epsilon)})\right| \leq 2 \epsilon
$$
and $U_k(\bd{T}_{k-i_0(\epsilon)})< u_{0}(\bd{T}_{k-i_0(\epsilon)})$ %
with $k-i_0(\epsilon)>i_0(\epsilon)$.
Hence
there is a $q_k\in(i_0(\epsilon),k-i_0(\epsilon))$ such that
\begin{equation}\label{eq:4.8}
\begin{aligned}
U_{k}(\bd{T}_i) &\geq u_{0}(\bd{T}_i), \quad  i_{0}(\epsilon) \leq i \leq q_{k}-1;\\
 U_{k}(\bd{T}_{q_k}) &< u_{0}(\bd{T}_{q_k}).
\end{aligned}
\end{equation}
Let $W_k:=\tau^{1}_{-q_k}U_k$.
Then proceeding as for $U$, we have that
$W_{k} \rightarrow W$ pointwise
for some $W$ satisfying
$W+v_0\in\hat{\Gamma}_1(v_0,w_0)$
and
$W(\bd{0})\leq u_{0}(\bd{0})$.
If $J_1(W+v_0)=\infty$, then similar to \eqref{item:99}, $W+v_0$ is a new solution which is different with $U+v_0$ since $J_1(U+v_0)<\infty$.
So we assume $J_1(W+v_0)<\infty$.
We claim
\begin{equation}\label{eq:kfls}
  q_k\to \infty \textrm{\quad as \quad} k\to\infty.
\end{equation}
Suppose \eqref{eq:kfls} holds for the moment.
Then
\eqref{eq:4.8} implies
\begin{equation}\label{eq:4.9}
W(\bd{i}) \geq u_{0}(\bd{i}), \quad \bd{i}_{1} < 0.
\end{equation}
Hence applying
Lemma \ref{lem:6553} shows that
\begin{equation*}
|(W+v_0-w_0)(\bd{T}_i)| \rightarrow 0, \quad i \rightarrow-\infty .
\end{equation*}
Since we assume there is no subsequence of $\tau_{-j_{k}}^{1} u_{k}+v_0$
converging to homoclinic solution, so $W+v_0$ is a heteroclinic solution from $w_0$ to $v_0$.

What is left is
to show \eqref{eq:kfls}.
Suppose, by contradiction, that $q_k$ is bounded.
Then up to a subsequence, we can assume $q_k\equiv q > i_0(\epsilon)$ and $W=\tau^{1}_{-q}U$.
Thus
\begin{equation*}
\begin{aligned}
&U(\bd{T}_i) \geq u_{0}(\bd{T}_i), \quad  i_{0}(\epsilon) \leq i \leq q-1;\\
& U(\bd{T}_{q}) \leq u_{0}(\bd{T}_{q}).
\end{aligned}
\end{equation*}
But by \eqref{eq:4.7} for $\epsilon$ small enough, $U(\bd{T}_{q}) > u_{0}(\bd{T}_{q})$, a contradiction.
Thus \eqref{eq:kfls} is proved. 
\end{enumerate}
\end{enumerate}

Summarizing the above discussion, we have the following Table \ref{table:1},
where \eqref{eq:oec} is
\begin{equation}\label{eq:oec}
  \textrm{there is no subsequence of $\tau_{-j_k}^{1}u_k$ converging to homoclinic solution.}
\end{equation}
\begin{table}[ht]
\caption{Limits of $\tau_{-j_k}^{1}u_k$}\label{table:1}
\begin{tabular}{|c|c|c|c|}
  \hline
  $J_1(U+v_0)$             & \multicolumn{3}{|c|}{$U+v_0$} \\ \hline
  \eqref{item:99}: $=\infty$                & \multicolumn{3}{|c|}{A new solution}  \\ \hline
   \multirow{5}*{\eqref{item:100}: $<\infty$} & \multicolumn{3}{|c|}{\eqref{item:100-1}: Homoclinic solution} \\ \cline{2-4}
                           &\multirow{4}{38mm}{\eqref{item:100-2}: Heteroclinic solution (say, from $v_0$ to $w_0$) and \eqref{eq:oec} holds}   &  $J_1(V+v_0)$             & $V+v_0$   \\ \cline{3-4}
                           &                    &  $=\infty$                & A new solution\\ \cline{3-4}
                           &              &  \multirow{2}*{$<\infty$} & Heteroclinic solution \\
                           &                                       &                           & (from $w_0$ to $v_0$) \\  \hline
\end{tabular}
\end{table}

\section{Mountain pass solutions in the gap of $\MM_1(v_0,w_0)$}\label{sec:2006}

We construct heteroclinic mountain pass solution in this section.
The difference is that the sum in the definition of $I_{0}^{\bd{p}}$ of \eqref{eq:1.2} involves only finite many terms,
but in this section
a new functional, $I_{1}^{\bd{q}}$, will be a sum of infinitely many terms.
Suppose that \eqref{eq:*0} and \eqref{eq:*1} hold.
For $\bd{q}=(\bd{q}_2,\cdots,\bd{q}_n)\in\N^{n-1}$,
set
\begin{equation}\label{eq:gamma1865411}
\begin{split}
\Lambda_{1}^{\bd{q}}=\{u\in\R^{\Z\times \Z^{n-1} /(\bd{q}\Z^{n-1})}\,|\,
\norm{u}_{\Lambda_{1}^{\bd{q}}}:=&\norm{u}_{\ell^1}+\norm{u}_{\ell^2}\\
:=&\sum_{\bd{j}\in \Z\times \T_{1}^{\bd{q}}}|u(\bd{j})| + \sqrt{\sum_{\bd{j}\in \Z\times \T_{1}^{\bd{q}}}|u(\bd{j})|^{2}} <\infty
\}.
\end{split}
\end{equation}
Obviously $(\Lambda_{1}^{\bd{q}},\norm{\cdot}_{\Lambda_{1}^{\bd{q}}})$ is a Banach space with norm $\norm{\cdot}_{\Lambda_{1}^{\bd{q}}}$.
In fact, this norm is $\norm{\cdot}_{\ell^{1}(\Z\times \T_{1}^{\bd{q}})}+\norm{\cdot}_{\ell^{2}(\Z\times \T_{1}^{\bd{q}})}$ on $\ell^{1}(\Z\times \T_{1}^{\bd{q}})\cap \ell^{2}(\Z\times \T_{1}^{\bd{q}})$.
\begin{rem}
The norm $\|\cdot\|_{\ell^2}$ will be used in a similar result of Lemma \ref{lem:2.2} \eqref{lem:2.2-5} of
the deformation lemma.
In the proof of Lemma \ref{lem:2.2} \eqref{lem:2.2-5},
$\|\cdot\|_{\ell^2}$ can be replaced by equivalent norm $\|\cdot\|_{\ell^1}$ since there are only finite many terms.
But for infinitely many term in the definition of $I_{1}^{\bd{q}}$, $\|\cdot\|_{\ell^2}$ cannot be replaced by $\|\cdot\|_{\ell^1}$ any more.
Noticing that to show $I_{1}^{\bd{q}}$ is $C^1$, one need the norm $\|\cdot\|_{\ell^1}$,
which cannot be replaced by $\|\cdot\|_{\ell^2}$.
Please see the proof of Proposition \ref{prop:diff-565}.
\end{rem}
Note that
\begin{equation}\label{eq:04272}
w_1(\bd{i})-v_1(\bd{i})\leq w_0(\bd{0})-v_0(\bd{0})\leq 1
\end{equation}
and $$\Big[v_1(\bd{i}),w_1(\bd{i})\Big)\cap \Big[v_1(\bd{j}),w_1(\bd{j})\Big)=\emptyset \quad\textrm{for } \bd{i}_1\neq \bd{j}_1,$$ so
\begin{equation*}
\sum_{\bd{j}\in \Z\times \T_{1}^{\bd{q}}}|w_1(\bd{j})-v_1(\bd{j})|\leq C(\bd{q})
\end{equation*}
for some constant $C(\bd{q})$ depending only on $\bd{q}$.
Thus by \eqref{eq:04272},
\begin{equation*}
\sum_{\bd{j}\in \Z\times \T_{1}^{\bd{q}}}|w_1(\bd{j})-v_1(\bd{j})|^2
\leq\sum_{\bd{j}\in \Z\times \T_{1}^{\bd{q}}}|w_1(\bd{j})-v_1(\bd{j})|
\leq C(\bd{q})
\end{equation*}
and hence $\norm{w_1-v_1}_{\Lambda_{1}^{\bd{q}}}<\infty$, i.e., $w_1-v_1\in \Lambda_{1}^{\bd{q}}$.

For $u\in \Lambda_{1}^{\bd{q}}$, define
\begin{equation*}
I_{1;p,q}^{\bd{q}}(u):=J_{1;p,q}^{\bd{q}}(u+v_1),
\end{equation*}
where $J_{1;p,q}^{\bd{q}}$ is defined in Section \ref{sec:312432}.
For simplicity, set $c_1=c_1(v_0,w_0)$ and $c_{1}^{\bd{q}}=c_{1}^{\bd{q}}(v_0,w_0)$.
Since $u\in \Lambda_{1}^{\bd{q}}$ implies
$|u(\bd{i})|\leq j_0$
for some $j_0=j_0(u)\in \N$, we have
$v_0 -j_0 \leq u+v_1\leq w_0+j_0$ and
\begin{equation*}
  \begin{split}
    \lim_{i\to -\infty}\sum_{\bd{j}\in \{i\}\times \T^{\bd{q}}_{1}}
|(u+v_1)(\bd{j})-v_0(\bd{j})|&= 0,\\
     \lim_{i\to \infty}\sum_{\bd{j}\in \{i\}\times \T^{\bd{q}}_{1}}
|(u+v_1)(\bd{j})-w_0(\bd{j})|&= 0.
  \end{split}
\end{equation*}
Thus by Lemma \ref{prop:8554556} and Remark \ref{rem:000123123},
\begin{equation*}
I_{1}^{\bd{q}}(u):=\liminf_{p\to -\infty \atop q\to \infty}I_{1;p,q}^{\bd{q}}(u)
\end{equation*}
is well-defined and
if $I_{1}^{\bd{q}}(u)<\infty$, then
\begin{equation}\label{eq:9658896}
  I_{1}^{\bd{q}}(u)=\lim_{p\to -\infty \atop q\to \infty}I_{1;p,q}^{\bd{q}}(u), \quad \textrm{ i.e., }\quad
I_{1}^{\bd{q}}(u)=\sum_{i\in\Z}I_{1,i}^{\bd{q}}(u).
\end{equation}
Since we use a modified Mountain Pass Theorem to show the existence of critical point,
the functional should be well-defined from $\Lambda_{1}^{\bd{q}}$
to $\R$ and be $C^1$.
Fortunately, this is the case, as the following two propositions show.
\begin{prop}\label{prop:90999}
For any $u\in\Lambda_{1}^{\bd{q}}$,
$I_{1}^{\bd{q}}(u)<\infty$ and thus
\eqref{eq:9658896} holds.
\end{prop}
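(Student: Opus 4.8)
The plan is to show that $I_1^{\bd{q}}(u)$ is finite by producing, for each $u\in\Lambda_1^{\bd{q}}$, a uniform upper bound on the partial sums $I_{1;p,q}^{\bd{q}}(u)$, and then invoking \eqref{eq:9658896} together with the fact (from Lemma \ref{prop:8554556} and Remark \ref{rem:000123123}) that the $\liminf$ is already known to be bounded below. The key observation is that although $u+v_1$ need not lie between $v_0$ and $w_0$, it does satisfy $v_0-j_0\le u+v_1\le w_0+j_0$ for some $j_0=j_0(u)\in\N$, so $u+v_1$ has bounded action with a constant depending only on $w_0-v_0$ (hence on $v_0,w_0$), since $\|u+v_1-v_1\|_{\ell^\infty}\le\|u\|_{\ell^2}<\infty$ and $v_1\in\MM_1(v_0,w_0)$ has bounded action.

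First I would compare $u+v_1$ with $v_1$ using Lemma \ref{lem:lc16321}: both functions have bounded action (with a common constant $C$ depending on $u$ through $j_0$ and on $v_0,w_0$), so for every finite $B\subset\Z\times\Z^{n-1}$,
\begin{equation*}
\Big|\sum_{\bd{j}\in B}S_{\bd{j}}(u+v_1)-\sum_{\bd{j}\in B}S_{\bd{j}}(v_1)\Big|\le L\sum_{\bd{j}\in\bar{B}}|u(\bd{j})|.
\end{equation*}
Taking $B=\bigcup_{i=p}^{q}\{i\}\times\T_1^{\bd{q}}$ and subtracting the constant $c_0$ from each of the $(q-p+1)(\prod_{i=2}^n\bd{q}_i)$ terms in both sums, I get
\begin{equation*}
I_{1;p,q}^{\bd{q}}(u)\le J_{1;p,q}^{\bd{q}}(v_1)+L\sum_{\bd{j}\in\bar{B}}|u(\bd{j})|\le J_{1;p,q}^{\bd{q}}(v_1)+L\,C(r)\,\|u\|_{\ell^1},
\end{equation*}
where the last step uses that $\bar B$ only enlarges $B$ by a bounded neighborhood in the $\bd{i}_1$-direction, so the $\ell^1$-sum over $\bar B$ is controlled by $\|u\|_{\ell^1}$ up to a combinatorial constant $C(r)$. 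Since $v_1\in\MM_1(v_0,w_0)\subset\Gamma_1(v_0,w_0)$ has $J_1(v_1)=c_1(v_0,w_0)<\infty$, the partial sums $J_{1;p,q}^{\bd{q}}(v_1)$ are bounded above uniformly in $p,q$ by $(\prod_{i=2}^n\bd{q}_i)\,c_1(v_0,w_0)+$ a constant (using the analogue of \eqref{eq:pppp5} for the $\bd{q}$-periodic functional, or directly Lemma \ref{prop:2.235655145} and Remark \ref{rem:000123123}, since $v_1$ restricted to the $\bd{q}$-periodic setting still has finite $J_1^{\bd{q}}$). Hence $I_{1;p,q}^{\bd{q}}(u)$ is bounded above by a constant independent of $p,q$, so $I_1^{\bd{q}}(u)=\liminf_{p\to-\infty,q\to\infty}I_{1;p,q}^{\bd{q}}(u)<\infty$, and \eqref{eq:9658896} then follows from the already-established Lemma \ref{prop:8554556} (via Remark \ref{rem:000123123}).

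The main obstacle I anticipate is bookkeeping around the ``bounded action'' constant: Lemma \ref{lem:lc16321} requires a single constant $C$ valid for both $u+v_1$ and $v_1$, and $C$ depends on $u$ (through $j_0(u)$), so the resulting bound on $I_1^{\bd{q}}(u)$ is not uniform in $u$ — but that is fine, since finiteness for each fixed $u$ is all that is claimed. A secondary point is making sure the $\bar B$ in Lemma \ref{lem:lc16321} genuinely only inflates the index range by a fixed amount in $\bd{i}_1$ and wraps around in the periodic directions, so that $\sum_{\bd{j}\in\bar B}|u(\bd{j})|\le C(r)\|u\|_{\ell^1}$ with $C(r)$ depending only on $r$; this is immediate from the definition $\bar B=\bigcup_{\bd{j}\in B}\{\bd{k}:\norm{\bd{k}-\bd{j}}\le r\}$. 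I would close by remarking that the appearance of $\|u\|_{\ell^1}$ here is precisely why the $\ell^1$-component of the norm $\norm{\cdot}_{\Lambda_1^{\bd{q}}}$ is needed, consistent with the remark preceding the proposition.
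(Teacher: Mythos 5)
Your proposal is correct and follows essentially the same route as the paper: comparing $u+v_1$ with $v_1$ via Lemma~\ref{lem:lc16321} after noting both have bounded action, bounding $\sum_{\bd{j}\in\bar B}|u(\bd{j})|$ by a constant multiple of $\norm{u}_{\ell^1}$, and controlling $J^{\bd{q}}_{1;p,q}(v_1)=I^{\bd{q}}_{1;p,q}(0)$ uniformly in $p,q$ via the $\bd{q}$-periodic analogue of \eqref{eq:pppp5}. The paper's proof is the same argument written more tersely (with $I^{\bd{q}}_{1;p,q}(0)$ in place of your $J^{\bd{q}}_{1;p,q}(v_1)$), so there is nothing to add.
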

\begin{proof}
Assume $u\in\Lambda_{1}^{\bd{q}}$, then there exists $j_0=j_0(u)\in \N$ such that
$v_0-j_0\leq v_1,v_1+u\leq w_0+j_0$.
Thus $v_1, v_1+u$ have bounded action.
By Lemma \ref{lem:lc16321},
there exists some $L=L(u,r)(=L(j_0, r))$, such that
\begin{equation*}
|I_{1;p,q}^{\bd{q}}(u)-I_{1;p,q}^{\bd{q}}(0)|\leq
L\sum_{\bd{j}\in \overline{[p,q]\times \T_{1}^{\bd{q}}}}|u(\bd{j})|\leq
L \cdot C(r)\norm{u}_{\Lambda_{1}^{\bd{q}}},
\end{equation*}
where $C(r)$ is a constant depending on $r$.
Thus
\begin{equation}\label{eq:need}
\begin{split}
  I_{1;p,q}^{\bd{q}}(u)\leq &I_{1;p,q}^{\bd{q}}(0) +L \cdot C(r)\norm{u}_{\Lambda_{1}^{\bd{q}}}\\
  \leq &(I_{1}^{\bd{q}}(0) +2K_1)+L \cdot C(r)\norm{u}_{\Lambda_{1}^{\bd{q}}}\\
  = &(c_{1}^{\bd{q}}+2K_1)+L \cdot C(r)\norm{u}_{\Lambda_{1}^{\bd{q}}}\\
  <&\infty,
  \end{split}
\end{equation}
where the second inequality follows from \eqref{eq:pppp5} with $K_1=K_1(\bd{q}, u, v_0,w_0)$.
Then $I_{1}^{\bd{q}}(u)<\infty$ and thus \eqref{eq:9658896}
follows.
\end{proof}

\begin{prop}\label{prop:diff-565}
We have
$I_{1}^{\bd{q}}\in C^1(\Lambda_{1}^{\bd{q}},\R)$.
If $(I_{1}^{\bd{q}})'(u)=0$, i.e., $u$ is a critical point of $I_{1}^{\bd{q}}$,
then $u+v_1$ is a solution of \eqref{eq:PDE}.
\end{prop}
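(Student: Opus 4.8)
The plan is to show that $I_1^{\bd q}$ is Fréchet differentiable with continuous derivative, and that its critical points solve \eqref{eq:PDE}. First I would write down the candidate derivative: for $u,v\in\Lambda_1^{\bd q}$ set
\begin{equation*}
  (I_1^{\bd q})'(u)v=\sum_{\bd j\in\Z\times\T_1^{\bd q}}v(\bd j)\sum_{\bd k:\norm{\bd k-\bd j}\leq r}\partial_{\bd k}S_{\bd j}(u+v_1),
\end{equation*}
in analogy with \eqref{eq:derivative}. The first task is to check this linear functional is well-defined and bounded on $\Lambda_1^{\bd q}$. Since $u\in\Lambda_1^{\bd q}$ gives $v_1$ and $u+v_1$ bounded action (as in the proof of Proposition \ref{prop:90999}), each inner sum $\sum_{\bd k}\partial_{\bd k}S_{\bd j}(u+v_1)$ is bounded by a constant $L=L(u,r)$ uniformly in $\bd j$ (this is exactly the constant extracted from (S\ref{eq:S4}) and Lemma \ref{lem:lc16321}); hence $|(I_1^{\bd q})'(u)v|\leq L\,C(r)\norm{v}_{\ell^1}\leq L\,C(r)\norm{v}_{\Lambda_1^{\bd q}}$. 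Here is where the $\ell^1$ part of the norm is essential — the linear functional need not be $\ell^2$-bounded because the coefficients are only uniformly bounded, not square-summable.

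Next I would prove Fréchet differentiability at each $u$. Write the finite-truncation functionals $I_{1;p,q}^{\bd q}$, which are genuinely $C^1$ on $\Lambda_1^{\bd q}$ since each is a finite sum of $C^2$ local potentials composed with bounded linear maps (the same computation as \eqref{eq:derivative}). For the remainder, expand
\begin{equation*}
  I_1^{\bd q}(u+v)-I_1^{\bd q}(u)-(I_1^{\bd q})'(u)v
  =\sum_{\bd j}\sum_{\bd k,\bd l}\Big(\int_0^1(1-t)\,\partial_{\bd k,\bd l}S_{\bd j}(u+v_1+tv)\,\ud t\Big)v(\bd k)v(\bd l),
\end{equation*}
valid termwise since each $J_{1,i}^{\bd q}$ is a finite sum; the global sum over $\bd j$ converges because $I_1^{\bd q}(u+v),I_1^{\bd q}(u)<\infty$ by Proposition \ref{prop:90999}. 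Now (S\ref{eq:S4}) bounds $|\partial_{\bd k,\bd l}S_{\bd j}|$ by $C$ whenever $u+v_1+tv$ has bounded action, which holds for $\norm{v}_{\Lambda_1^{\bd q}}$ small (bounded action of $u+v_1$ plus a uniform perturbation, using $\norm{v}_{\ell^\infty}\leq\norm{v}_{\ell^1}$). This gives a bound of the form
\begin{equation*}
  \Big|I_1^{\bd q}(u+v)-I_1^{\bd q}(u)-(I_1^{\bd q})'(u)v\Big|\leq C\,C(r)\sum_{\bd j\in\Z\times\T_1^{\bd q}}|v(\bd j)|^2=C\,C(r)\norm{v}_{\ell^2}^2\leq C\,C(r)\norm{v}_{\Lambda_1^{\bd q}}^2,
\end{equation*}
which is $o(\norm{v}_{\Lambda_1^{\bd q}})$. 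This is the step where the $\ell^2$ part of the norm pays off: the quadratic remainder is naturally controlled by $\norm{v}_{\ell^2}^2$, not by $\norm{v}_{\ell^1}^2$, and squaring the $\ell^1$ norm would be far too weak for an infinite sum. Continuity of $u\mapsto(I_1^{\bd q})'(u)$ follows from a parallel estimate: $\norm{(I_1^{\bd q})'(u_1)-(I_1^{\bd q})'(u_2)}$ is bounded using the mean value theorem on $\partial_{\bd k}S_{\bd j}$ together with (S\ref{eq:S4}), giving a constant times $\norm{u_1-u_2}_{\ell^2}$ (or $\norm{u_1-u_2}_{\Lambda_1^{\bd q}}$), provided $u_1,u_2$ stay in a region of uniformly bounded action. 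I expect the main obstacle to be the bookkeeping of constants: making sure the bounded-action constant, hence the bound $C$ from (S\ref{eq:S4}), can be chosen uniformly in a neighborhood of $u$ and uniformly in $\bd j\in\Z$, so that the termwise estimates sum to a genuine $\Lambda_1^{\bd q}$-estimate rather than only a pointwise one.

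Finally, for the critical point assertion: if $(I_1^{\bd q})'(u)=0$, test against $v=\delta_{\bd j}$ (the function equal to $1$ at $\bd j$ and its $\bd q$-translates, $0$ elsewhere), which lies in $\Lambda_1^{\bd q}$; this forces $\sum_{\bd k:\norm{\bd k-\bd j}\leq r}\partial_{\bd k}S_{\bd j}(u+v_1)=0$ for every $\bd j\in\Z\times\T_1^{\bd q}$, and by the $\bd q$-periodicity of $u$ and $v_1$ the same holds for all $\bd j\in\Z^n$. Rearranging the sum over $\bd j$ as in \eqref{eq:PDE} (reindexing $\partial_{\bd k}S_{\bd j}$ as $\partial_{\bd i}S_{\bd j}$ with $\norm{\bd j-\bd i}\leq r$) shows $u+v_1$ satisfies \eqref{eq:PDE}. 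This last part is routine once differentiability is in hand.
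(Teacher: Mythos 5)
Your proof is correct, but it takes a genuinely different route from the paper's. The paper establishes Gateaux differentiability first — bounding the difference quotients termwise by summable majorants $M_i = L\sum_{\bd j\in\overline{\{i\}\times\T_1^{\bd q}}}|v(\bd j)|$ via Lemma~\ref{lem:lc16321} and dominated convergence — and then shows the Gateaux derivative is continuous, invoking the standard fact that Gateaux differentiable plus continuous derivative implies $C^1$ Fréchet. You instead prove Fréchet differentiability directly via a second-order Taylor expansion and control the quadratic remainder. Both routes work; yours gives an explicit quadratic modulus, while the paper's is a bit shorter and avoids second-order bookkeeping entirely, since the continuity step is a first-order mean-value estimate essentially identical in shape to your remainder bound.

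Two small corrections to your commentary, neither affecting validity. First, the claim that ``squaring the $\ell^1$ norm would be far too weak for an infinite sum'' is not quite right: because each $\bd j$ collects only lattice points within distance $r$, the remainder is really bounded by a constant times $\sum_{\bd k,\bd l:\norm{\bd k-\bd l}\leq 2r}|v(\bd k)||v(\bd l)|\leq C(r)\norm{v}_{\ell^1}\norm{v}_{\ell^\infty}\leq C(r)\norm{v}_{\ell^1}^2$, so an $\ell^1$-squared bound would in fact also yield $o(\norm{v}_{\Lambda_1^{\bd q}})$. The paper's own remark before this proposition makes the point that $\ell^2$ is required for the deformation lemma, not for the $C^1$ proof — so the $\ell^2$ half of the norm is an available convenience here, not a necessity. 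Second, your anticipated ``main obstacle'' about choosing the bound on $\partial_{\bd k,\bd l}S_{\bd j}$ uniformly in a neighborhood of $u$ is a non-issue: hypothesis (S\ref{eq:S4}) is a global $L^\infty$ bound on all second partials of $s$, so the constant $C$ there is already uniform regardless of which functions are plugged in; no bounded-action restriction is needed for the second derivatives, only for the first (which is where Lemma~\ref{lem:lc16321} is used).
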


\begin{proof}
Firstly, we prove that $I_{1}^{\bd{q}}$ is Gateaux differentiable.
For $u, v\in \Lambda_{1}^{\bd{q}}$, and $t\in[-1,1]\setminus \{0\}$,
\begin{equation*}
     \Big|\sum_{\bd{j}\in \{i\}\times \T_{1}^{\bd{q}}} \frac{S_{\bd{j}}(u+tv+v_1)-S_{\bd{j}}(u+v_1)}{t}\Big|
    \leq L\sum_{\bd{j}\in \overline{\{i\}\times \T_{1}^{\bd{q}}}} |v(\bd{j})|
    =:M_i.
\end{equation*}
Since $v\in\Lambda_{1}^{\bd{q}}$, $\sum_{i\in\Z}M_i<\infty$.
Thus we have
\begin{equation}\label{eq:04271}
\begin{split}
  (I_{1}^{\bd{q}})'(u)v
  =&\sum_{i\in \Z}\sum_{\bd{j}\in \{i\}\times \T_{1}^{\bd{q}}} \sum_{\bd{k}: \norm{\bd{k}-\bd{j}}\leq r}\partial_{\bd{k}}S_{\bd{j}}(u+v_1)\cdot v(\bd{k})\\
  =&\sum_{i\in \Z}\sum_{\bd{j}\in \{i\}\times \T_{1}^{\bd{q}}} v(\bd{j})\sum_{\bd{k}: \norm{\bd{k}-\bd{j}}\leq r}\partial_{\bd{j}}S_{\bd{k}}(u+v_1).
    \end{split}
\end{equation}

To show that $(I_{1}^{\bd{q}})'$ is continuous, set $u_m\to u$ in $\Lambda_{1}^{\bd{q}}$ as $m\to \infty$. Then
\begin{equation}\label{eq:0427}
  \begin{split}
    &|[(I_{1}^{\bd{q}})'(u_m)-(I_{1}^{\bd{q}})'(u)]v| \\
    =& \Big| \sum_{i\in\Z} \sum_{\bd{j}\in \{i\}\times \T_{1}^{\bd{q}}} v(\bd{j})\sum_{\bd{k}: \norm{\bd{k}-\bd{j}}\leq r}[\partial_{\bd{j}}S_{\bd{k}}(u_m+v_1)- \partial_{\bd{j}}S_{\bd{k}}(u+v_1)]\Big| \\
      =& \Big| \sum_{i\in\Z} \sum_{\bd{j}\in \{i\}\times \T_{1}^{\bd{q}}} v(\bd{j})\sum_{\bd{k}: \norm{\bd{k}-\bd{j}}\leq r}
      \int_{0}^{1}\frac{\ud}{\ud t}\partial_{\bd{j}}S_{\bd{k}}(u+t(u_m-u)+v_1)\ud t \Big|\\
      =& \Big| \sum_{i\in\Z} \sum_{\bd{j}\in \{i\}\times \T_{1}^{\bd{q}}} v(\bd{j})\sum_{\bd{k}: \norm{\bd{k}-\bd{j}}\leq r}
      \int_{0}^{1}\sum_{\bd{l}: \norm{\bd{l}-\bd{k}}\leq r}\partial_{\bd{j},\bd{l}}S_{\bd{k}}(u+t(u_m-u)+v_1)\ud t\, \cdot(u_m-u)(\bd{l}) \Big|\\
      \leq & C \sum_{i\in\Z} \sum_{\bd{j}\in \{i\}\times \T_{1}^{\bd{q}}} |v(\bd{j})|\sum_{\bd{k}: \norm{\bd{k}-\bd{j}}\leq r}\sum_{\bd{l}: \norm{\bd{l}-\bd{k}}\leq r}
     |(u_m-u)(\bd{l})|.
  \end{split}
\end{equation}
So $(I_{1}^{\bd{q}})'(u_m)\to (I_{1}^{\bd{q}})'(u)$ as $m\to\infty$.
By \eqref{eq:04271}, if $u$ is a critical point of $I_{1}^{\bd{q}}$, then $u+v_1$ is a solution of \eqref{eq:PDE}.
So the proof of Proposition \ref{prop:diff-565} is complete.
\end{proof}
Now following Section \ref{sec:mountainpass}, let us
define
semiflow $\Phi_{t}^{1}: \Lambda_{1}^{\bd{q}}\to \Lambda_{1}^{\bd{q}}$ as follows:
\begin{equation*}
  \left\{
    \begin{array}{ll}
      -\partial_t \Phi_{t}^{1} (u)(\bd{i}) & =\sum_{\bd{j}: \norm{\bd{j}-\bd{i}}\leq r}\partial_{\bd{i}}S_{\bd{j}}(\Phi_{t}^{1} (u)+v_1), \quad \quad \textrm{for }t>0,\\
      \Phi_{0}^{1} (u)(\bd{i}) & =u(\bd{i}).
    \end{array}
  \right.
\end{equation*}
Set $W(u)(\bd{i}):=\sum_{\bd{j}: \norm{\bd{j}-\bd{i}}\leq r}\partial_{\bd{i}}S_{\bd{j}}(u+v_1)$.
Similar proof of \eqref{eq:lipschitz} shows that $\|W(u)-W(v)\|_{\ell^2}\leq C\|u-v\|_{\ell^2}$ for some $C=C(r)$.
The proof of $\|W(u)-W(v)\|_{\ell^1}\leq C\|u-v\|_{\ell^1}$ is easier (cf. \eqref{eq:0427}).
So $\Phi_{t}^{1}$ is well-defined and is $C^1$ in $t$.
Moreover, a new version of Proposition \ref{prop:comparison} is obtained.

Set
\begin{equation*}
\MG^{\bd{q}}_{1} =\left\{u \in \Lambda_{1}^{\bd{q}} \, |\, 0 \leq u \leq w_{1}-v_{1}\right\}.
\end{equation*}
Note that if $u\in \R^{\Z\times \Z^{n-1} /(\bd{q}\Z^{n-1})}$ and $0\leq u\leq w_1-v_1$,
\begin{equation}\label{eq:gamma1p}
  \norm{u}_{\Lambda_{1}^{\bd{q}}}\leq \norm{w_1-v_1}_{\Lambda_{1}^{\bd{q}}}<\infty.
\end{equation}
In other words, $\{u\in\R^{\Z\times \Z^{n-1} /(\bd{q}\Z^{n-1})}\,|\, 0\leq u\leq w_1-v_1\}\subset \MG^{\bd{q}}_{1}$.
\begin{prop}\label{prop:compact1}
$\MG_{1}^{\bd{q}}$ is compact with respect to the norm $\norm{\cdot}_{\Lambda_{1}^{\bd{q}}}$.
\end{prop}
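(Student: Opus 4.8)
The plan is to argue exactly as in Proposition~\ref{prop:compact}, i.e.\ to extract a pointwise convergent subsequence and then upgrade pointwise convergence to convergence in $\norm{\cdot}_{\Lambda_1^{\bd{q}}}$. The one genuinely new point, absent in the finite-dimensional setting of Proposition~\ref{prop:compact}, is a \emph{uniform tail estimate} coming from the fact that every element of $\MG_1^{\bd{q}}$ is squeezed between $0$ and the fixed $\Lambda_1^{\bd{q}}$-function $w_1-v_1$.

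First I would take an arbitrary sequence $(u_k)\subset\MG_1^{\bd{q}}$. Since $0\leq u_k\leq w_1-v_1$ for every $k$, Lemma~\ref{lem:subsequ} produces a subsequence, still written $(u_k)$, converging pointwise to some $u\colon\Z\times\Z^{n-1}\to\R$; letting $k\to\infty$ in the inequalities $0\leq u_k\leq w_1-v_1$ gives $0\leq u\leq w_1-v_1$, and then \eqref{eq:gamma1p} shows $u\in\MG_1^{\bd{q}}$. It remains to prove $\norm{u_k-u}_{\Lambda_1^{\bd{q}}}\to 0$.

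For the tail estimate, fix $\epsilon>0$. Since $w_1-v_1\in\Lambda_1^{\bd{q}}$, the series $\sum_{\bd{j}\in\Z\times\T_1^{\bd{q}}}|(w_1-v_1)(\bd{j})|$ converges, so there is $N\in\N$ with $\sum_{\bd{j}\in\Z\times\T_1^{\bd{q}},\,|\bd{j}_1|>N}(w_1-v_1)(\bd{j})<\epsilon$. Because $0\leq u_k,u\leq w_1-v_1$ pointwise, we have $|u_k(\bd{j})-u(\bd{j})|\leq 2(w_1-v_1)(\bd{j})$ for all $\bd{j}$, and by \eqref{eq:04272} (which gives $0\leq (w_1-v_1)(\bd{j})\leq 1$) also $|u_k(\bd{j})-u(\bd{j})|^2\leq 4(w_1-v_1)(\bd{j})^2\leq 4(w_1-v_1)(\bd{j})$. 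Hence the contribution of the indices with $|\bd{j}_1|>N$ to both $\norm{u_k-u}_{\ell^1}$ and $\norm{u_k-u}_{\ell^2}$ is small, uniformly in $k$. On the complementary \emph{finite} set $\{\bd{j}\in\Z\times\T_1^{\bd{q}}\mid|\bd{j}_1|\leq N\}$, pointwise convergence of $u_k$ to $u$ is the same as convergence in the $\ell^1$- and $\ell^2$-norms over that finite set, so this part tends to $0$ as $k\to\infty$. Splitting $\norm{u_k-u}_{\Lambda_1^{\bd{q}}}=\norm{u_k-u}_{\ell^1}+\norm{u_k-u}_{\ell^2}$ accordingly, we get that $\limsup_{k\to\infty}\norm{u_k-u}_{\Lambda_1^{\bd{q}}}$ can be made arbitrarily small, i.e.\ $u_k\to u$ in $\Lambda_1^{\bd{q}}$.

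The main obstacle is the tail estimate in the previous paragraph: over the infinite index set $\Z\times\T_1^{\bd{q}}$, pointwise convergence alone does not imply norm convergence, so one must exploit the uniform domination by $w_1-v_1\in\Lambda_1^{\bd{q}}$, together with the elementary but crucial remark from \eqref{eq:04272} that the $\ell^2$-tails are controlled by the $\ell^1$-tails. Granting that, the rest of the argument is a routine $\epsilon$-splitting exactly parallel to Proposition~\ref{prop:compact}.
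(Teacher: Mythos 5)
Your proof is correct and follows essentially the same route as the paper: extract a pointwise-convergent subsequence via Lemma~\ref{lem:subsequ}, identify the limit as lying in $\MG_1^{\bd{q}}$, and upgrade to $\Lambda_1^{\bd{q}}$-convergence by splitting the $\ell^1$- and $\ell^2$-sums into a finite block and a tail dominated uniformly by $w_1-v_1$. The only cosmetic difference is that the paper uses the sharper bound $|u_k-u|\leq w_1-v_1$ (both values lie in $[0,(w_1-v_1)(\bd{j})]$) and lets the $\ell^2$-tail go to zero directly from $w_1-v_1\in\Lambda_1^{\bd{q}}$, whereas you use the factor-$2$ bound and route the $\ell^2$-tail through the $\ell^1$-tail via \eqref{eq:04272}; both are fine.
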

\begin{proof}
For any $u_n\in \MG_{1}^{\bd{q}}$, by Lemma \ref{lem:subsequ} and \eqref{eq:gamma1p}, there is a $u\in \MG_{1}^{\bd{q}}$
such that $u_n\to u$ (maybe up to a subsequence) pointewise as $n\to \infty$ and
$|(u_n-u)(\bd{j})|\leq (w_1-v_1)(\bd{j}) $.
Note that
\begin{equation*}
\begin{split}
  \norm{u_k-u}_{\Lambda_{1}^{\bd{q}}}
  =&\sum_{\bd{j}\in [-N,N]\times \T_{1}^{\bd{q}}}|(u_k-u)(\bd{j})|
  +\sum_{\bd{j}\in (\Z\setminus [-N,N])\times \T_{1}^{\bd{q}}}|(u_k-u)(\bd{j})|\\
 \quad& + \left(\sum_{\bd{j}\in [-N,N]\times \T_{1}^{\bd{q}}}|(u_k-u)(\bd{j})|^2
  +\sum_{\bd{j}\in (\Z\setminus [-N,N])\times \T_{1}^{\bd{q}}}|(u_k-u)(\bd{j})|^2\right)^{\frac{1}{2}}\\
  \leq &\sum_{\bd{j}\in [-N,N]\times \T_{1}^{\bd{q}}}|(u_k-u)(\bd{j})|
  +\sum_{\bd{j}\in (\Z\setminus [-N,N])\times \T_{1}^{\bd{q}}}|(u_k-u)(\bd{j})|\\
 \quad& + \left(\sum_{\bd{j}\in [-N,N]\times \T_{1}^{\bd{q}}}|(u_k-u)(\bd{j})|^2\right)^{\frac{1}{2}}
  +\left(\sum_{\bd{j}\in (\Z\setminus [-N,N])\times \T_{1}^{\bd{q}}}|(u_k-u)(\bd{j})|^2\right)^{\frac{1}{2}}
  \end{split}
\end{equation*}
and
\begin{equation*}
\begin{split}
  &\sum_{\bd{j}\in (\Z\setminus [-N,N])\times \T_{1}^{\bd{q}}}|(u_k-u)(\bd{j})|+
  \left(\sum_{\bd{j}\in (\Z\setminus [-N,N])\times \T_{1}^{\bd{q}}}|(u_k-u)(\bd{j})|^2\right)^{\frac{1}{2}}\\
  \leq&
  \sum_{\bd{j}\in (\Z\setminus [-N,N])\times \T_{1}^{\bd{q}}}(w_1-v_1)(\bd{j})+
  \left(\sum_{\bd{j}\in (\Z\setminus [-N,N])\times \T_{1}^{\bd{q}}}|(w_1-v_1)(\bd{j})|^2\right)^{\frac{1}{2}}\\
  \to &
  0
  \end{split}
\end{equation*}
as $N\to \infty$.
Thus for any $\epsilon>0$, one can choose $N$ sufficiently large such that
 $$ \sum_{\bd{j}\in (\Z\setminus [-N,N])\times \T_{1}^{\bd{q}}}|(u_k-u)(\bd{j})|+
  \left(\sum_{\bd{j}\in (\Z\setminus [-N,N])\times \T_{1}^{\bd{q}}}|(u_k-u)(\bd{j})|^2\right)^{\frac{1}{2}}
  \leq \epsilon,$$
  so
\begin{equation*}
\begin{split}
&\lim_{k\to\infty}\norm{u_k-u}_{\Lambda_{1}^{\bd{q}}}\\
\leq&
\lim_{k\to\infty}\sum_{\bd{j}\in [-N,N]\times \T_{1}^{\bd{q}}}|(u_k-u)(\bd{j})|+
  \left(\sum_{\bd{j}\in [-N,N]\times \T_{1}^{\bd{q}}}|(u_k-u)(\bd{j})|^2\right)^{\frac{1}{2}}+\epsilon\\
=&\epsilon.
\end{split}
\end{equation*}
Since $\epsilon$ is arbitrary, Proposition \ref{prop:compact1} is proved.
\end{proof}
Set
\begin{equation*}
  \MH^{\bd{q}}_{1} =\left\{h \in C\left([0,1], \MG^{\bd{q}}_{1}\right) \,|\, h(0)=0, h(1)=w_{1}-v_{1}\right\}.
\end{equation*}
Hence one have new versions of Lemma \ref{lem:2.2}, Proposition \ref{thm:2.1-1-99} and Theorem \ref{thm:2.1}.
Thus we obtain a mountain pass critical point $u_{\bd{q}}$ and then a heteroclinic mountain pass solution $u_{\bd{q}} +v_1$
satisfying $v_1<u_{\bd{q}}+v_1<w_1$
for any $\bd{q}\in\N^{n-1}$.
Next we study the multiplicity of heteroclinic mountain pass solutions.
It sufficies to prove a silimlar result of Proposition \ref{prop:2.3}.
To this end, for each $k\in\N$, let $\bd{q}(k)=(k,1,\cdots, 1)\in\N^{n-1}$.

Set
$$
d_{1}^{\bd{q}(k)}=\inf _{h \in \MH_{1}^{\bd{q}(k)}} \max _{\theta \in[0,1]} I_{1}^{\bd{q}(k)}(h(\theta)).
$$
Then $d_{1}^{\bd{q}(k)}$ is a mountain pass
critical value of $I_1 ^{\bd{q}(k)}$ on $\MG_{1}^{\bd{q}(k)}$ with a corresponding mountain pass critical point $U_k$ such that $0 < U_k < w_1-v_1$ with $I_{1}^{\bd{q}(k)}(U_k) = d_{1}^{\bd{q}(k)} > c_{1}^{\bd{q}(k)}$ and $U_k+v_1$ is a solution of \eqref{eq:PDE}.
We have:
\begin{prop}\label{prop:5.1}
There is a constant $M_1$, independent of $k$, such that
$$0<d_{1}^{\bd{q}(k)}-c_{1}^{\bd{q}(k)}\leq M_1.$$
\end{prop}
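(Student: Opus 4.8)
The plan is to follow the proof of Proposition~\ref{prop:2.3}: exhibit one admissible path $h_k\in\MH^{\bd{q}(k)}_{1}$ whose maximal energy exceeds $c_{1}^{\bd{q}(k)}$ by at most a constant independent of $k$, so that $d_{1}^{\bd{q}(k)}\le\max_{\theta}I_{1}^{\bd{q}(k)}(h_k(\theta))\le c_{1}^{\bd{q}(k)}+M_1$; the strict inequality $d_{1}^{\bd{q}(k)}>c_{1}^{\bd{q}(k)}$ has already been recorded. The essential new feature compared with Proposition~\ref{prop:2.3} is that $I_{1}^{\bd{q}(k)}$ is an infinite sum in the $\bd{i}_1$-direction, so the accounting must be organised column by column in $\bd{i}_2$ (the direction whose period is varied), and the bounded ``transition region'' of Proposition~\ref{prop:2.3} becomes a bounded set of columns rather than of lattice points. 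Recall at the outset that $c_{1}^{\bd{q}(k)}=k\,c_1$ and, since $v_1\in\MM_1(v_0,w_0)=\MM^{\bd{q}(k)}_{1}(v_0,w_0)$ by Lemma~\ref{prop:2.235655145}, that $I_{1}^{\bd{q}(k)}(0)=J_{1}^{\bd{q}(k)}(v_1)=k\,c_1=c_{1}^{\bd{q}(k)}$.

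For the path I would reuse the staircase functions $\chi_k$ and $\phi_k$ built in the proof of Proposition~\ref{prop:2.3}, but now acting in the $\bd{i}_2$-direction: set
\[
h_k(\theta)(\bd{i})=\phi_k(\theta,\bd{i}_2)\,\bigl(w_1(\bd{i})-v_1(\bd{i})\bigr).
\]
Since $0\le\phi_k\le1$, $w_1-v_1\ge0$ and $w_1-v_1\in\Lambda^{\bd{q}(k)}_{1}$ (shown in the text), we get $h_k(\theta)\in\MG^{\bd{q}(k)}_{1}$; as $\phi_k$ is $k$-periodic in its second argument, $h_k(\theta)$ is indeed $\bd{q}(k)$-periodic, and $\phi_k(0,\cdot)\equiv0$, $\phi_k(1,\cdot)\equiv1$ together with the continuity of $\theta\mapsto\phi_k(\theta,\cdot)$ give $h_k\in\MH^{\bd{q}(k)}_{1}$. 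The feature of $\chi_k$ I would invoke is exactly the one used in Proposition~\ref{prop:2.3}: for each fixed $\theta$ the set $\{\bd{i}_2:\phi_k(\theta,\bd{i}_2)=1\}$ is an interval modulo $k$ whose boundary has bounded width, so the set $\mathcal{C}_t(\theta)$ of ``transition columns'' $\bd{j}_2\in\{0,\dots,k-1\}$ for which $\phi_k(\theta,\cdot)$ fails to be constant on $\{\bd{j}_2-r,\dots,\bd{j}_2+r\}\bmod k$ satisfies $|\mathcal{C}_t(\theta)|\le C(r)$, with $C(r)$ independent of $k$ and $\theta$; write $\mathcal{C}_0(\theta)$, $\mathcal{C}_1(\theta)$ for the complementary sets on which $\phi_k(\theta,\cdot)$ is constant $\equiv0$, resp.\ $\equiv1$, on that $r$-neighbourhood.

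Next I would estimate $I_{1}^{\bd{q}(k)}(h_k(\theta))-I_{1}^{\bd{q}(k)}(0)=\lim_{p\to-\infty,\,q\to\infty}\sum_{i=p}^{q}\bigl(I^{\bd{q}(k)}_{1,i}(h_k(\theta))-I^{\bd{q}(k)}_{1,i}(0)\bigr)$, which is legitimate because both functionals obey \eqref{eq:9658896}. Fix $i$ and split the finite sum over $\bd{j}_2\in\{0,\dots,k-1\}$, writing $\bd{j}=(i,\bd{j}_2,0,\dots,0)$. If $\bd{j}_2\in\mathcal{C}_0(\theta)$ then $h_k(\theta)+v_1=v_1$ on $\bd{j}+B_{\bd{0}}^{r}$, so the term vanishes. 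If $\bd{j}_2\in\mathcal{C}_1(\theta)$ then $h_k(\theta)+v_1=w_1$ there, and by the $1$-periodicity of $v_1,w_1$ in $\bd{i}_2,\dots,\bd{i}_n$ the term equals $S_{(i,0,\dots,0)}(w_1)-S_{(i,0,\dots,0)}(v_1)$, independent of $\bd{j}_2$. If $\bd{j}_2\in\mathcal{C}_t(\theta)$, then — both $v_1$ and $h_k(\theta)+v_1$ lying between $v_0$ and $w_0$, hence having bounded action with a $k$-independent constant — Lemma~\ref{lem:lc16321} bounds the term in absolute value by $L\sum_{\|\bd{k}-\bd{j}\|\le r}(w_1-v_1)(\bd{k})\le L(2r+1)^{n-1}\sum_{|i'-i|\le r}(w_1-v_1)(i',0,\dots,0)$. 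Summing over $p\le i\le q$: the $\mathcal{C}_1$-part gives $|\mathcal{C}_1(\theta)|\bigl(J_{1;p,q}(w_1)-J_{1;p,q}(v_1)\bigr)$, which tends to $|\mathcal{C}_1(\theta)|(c_1-c_1)=0$ because $J_1(v_1)=J_1(w_1)=c_1$ are realized as genuine limits; the $\mathcal{C}_t$-part is bounded in absolute value, uniformly in $p,q$, by $|\mathcal{C}_t(\theta)|\,L(2r+1)^{n}\sum_{\bd{j}_1\in\Z}(w_1-v_1)(\bd{j}_1,0,\dots,0)$. This last series is finite and $k$-independent: the half-open intervals $[v_1(\bd{j}_1,0,\dots,0),w_1(\bd{j}_1,0,\dots,0))$, $\bd{j}_1\in\Z$, are pairwise disjoint (text) and contained in $[v_0(\bd{0}),w_0(\bd{0}))$, so by \eqref{eq:04272} the sum is at most $w_0(\bd{0})-v_0(\bd{0})\le1$. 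Hence $I_{1}^{\bd{q}(k)}(h_k(\theta))-I_{1}^{\bd{q}(k)}(0)\le C(r)L(2r+1)^{n}$, and with $M_1:=C(r)L(2r+1)^{n}$ we obtain $d_{1}^{\bd{q}(k)}\le c_{1}^{\bd{q}(k)}+M_1$; combined with the already-established $d_{1}^{\bd{q}(k)}>c_{1}^{\bd{q}(k)}$ this is the assertion.

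The main obstacle is precisely this uniformity in $k$, and the construction is built around its two sources. First, one must not let the ``settled'' columns accumulate energy growing with $k$: a naive path such as $h_k(\theta)=\theta(w_1-v_1)$ would put all $k$ columns simultaneously into a non-minimal state and yield a bound of order $k$. The staircase $\phi_k$ keeps at most $C(r)$ columns ``in transit'' at any instant, and — the second point — each such column costs only $O(1)$ over $c_1$, which rests on the $k$-independent summability $\sum_{\bd{j}_1}(w_1-v_1)(\bd{j}_1,0,\dots,0)\le1$ together with the Lipschitz estimate of Lemma~\ref{lem:lc16321}. The columns permanently sitting at $w_1$, however numerous (possibly all $k$), cost nothing in the limit because $J_1(w_1)=J_1(v_1)=c_1$; this is why the boundary term $|\mathcal{C}_1(\theta)|\bigl(J_{1;p,q}(w_1)-J_{1;p,q}(v_1)\bigr)$ vanishes as $p\to-\infty,\,q\to\infty$ even though $|\mathcal{C}_1(\theta)|$ may be comparable to $k$.
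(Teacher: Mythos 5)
Your proof is correct and follows essentially the same route as the paper: the same staircase path $h_k(\theta)=\phi_k(\theta,\bd{i}_2)(w_1-v_1)$, the same column-by-column decomposition in the $\bd{i}_2$-direction into settled and transition columns, the same use of $I_1^{\bd{q}(k)}(0)=J_1^{\bd{q}(k)}(v_1)=c_1^{\bd{q}(k)}$ to make the $w_1$-columns cost nothing, and the same appeal to Lemma~\ref{lem:lc16321} to bound each transition column by $O(1)$. One small improvement in your write-up: you bound the transition-column cost directly by $L(2r+1)^n\sum_{\bd{j}_1}(w_1-v_1)(\bd{j}_1,0,\dots,0)\le L(2r+1)^n$, which makes the $k$-independence of the constant manifest, whereas the paper's estimate \eqref{eq:0428} nominally invokes $\norm{w_1-v_1}_{\Lambda_1^{\bd{q}}}$ (which grows with $k$) as an upper bound for this same $k$-independent quantity.
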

\begin{proof}
For $u\in \Lambda_{1}^{\bd{q}(k)}$, let $I_1(u):=\sum_{\bd{j}\in\Z\times\{0\}^{n-1}}[S_{\bd{j}}(u+v_1)-c_0]$, then
$$
I_{1}^{\bd{q}(k)}(u)=\sum_{i=0}^{k-1} I_{1}(\tau^{2}_{-i} u).
$$
Set
$h_{k}(\theta)=\phi_{k}\left(\theta,\bd{i}_{2}\right)\left(w_{1}-v_{1}\right),$
where $\phi_k$ is defined in \eqref{eq:phik}.
To prove Proposition \ref{prop:5.1},
it suffices to show that
\begin{equation}\label{eq:5.3}
\max_{\theta\in[0,1]}I_{1}^{\bd{q}(k)}\left(h_{k}(\theta)\right) -c_{1}^{\bd{q}(k)}\leq M_{1}
\end{equation}
holds for some $M_1$ independent of $k$.
Note
$$
\begin{aligned}
I_{1}^{\bd{q}(k)}\left(h_{k}(\theta)\right)
&=\sum_{\bd{j}\in\Z\times [0,k-1]\times \{0\}^{n-2}}[S_{\bd{j}}(h_k(\theta)+v_1)-c_0]\\
&=\sum_{\bd{j}\in\Z\times [a,a+k-1]\times \{0\}^{n-2}}[S_{\bd{j}}(h_k(\theta)+v_1)-c_0]
\end{aligned}
$$
for any $a\in\Z$.
For any $\theta\in [0,1]$, let
\begin{equation*}
\begin{split}
\MA:=\{\bd{i}\in\Z^n \,|\, &0\leq \bd{i}_2\leq k-1, \bd{i}_3=\cdots=\bd{i}_n=0,\\
&\phi_{k}\left(\theta,\bd{i}_{2}\right) \neq 0 \text { or } \phi_{k}\left(\theta,\bd{i}_{2}\right) \neq 1\}.
\end{split}
\end{equation*}
By the construction of $\phi_k$, $\MA$
consists of at most two regions, say $R_i$ ($i=1,2$) of the form
$R_{i}=\mathbb{Z} \times \{a_{i}\} \times \{0\}^{n-2}$ with $a_1\leq a_2$.
Therefore
\begin{equation}\label{eq:04274}
\textrm{$h_k=0$ or $w_1-v_1$ on
$\bigcup_{i=0}^{k-1} (\Z\times \{i\}\times \{0\}^{n-2} ) \backslash\left(R_{1} \cup R_{2}\right)$}.
\end{equation}
Then
\begin{equation}\label{eq:04273}
\begin{split}
  &I_{1}^{\bd{q}(k)}\left(h_{k}(\theta)\right)-c_{1}^{\bd{q}(k)} \\
  =&  \sum_{i\in [a, a+k-1]}\left\{\sum_{\bd{j}\in\Z\times \{i\} \times\{0\}}[S_{\bd{j}}(h_k(\theta)+v_1)-c_0]-c_1\right\}\\
    =& \sum_{i\in [a, a+k-1]} \left\{\sum_{\bd{j}\in\Z\times \{i\}\times\{0\}}[S_{\bd{j}}(h_k(\theta)+v_1)-c_0]-
    \sum_{\bd{j}\in\Z\times \{i\}\times\{0\}}[S_{\bd{j}}(v_1)-c_0]\right\}\\
    =&\sum_{i\in [a, a+k-1]}\sum_{\bd{j}\in\Z\times \{i\}\times\{0\}}[S_{\bd{j}}(h_k(\theta)+v_1)-S_{\bd{j}}(v_1)].
\end{split}
\end{equation}
Then by \eqref{eq:04274}, the cardinality of
$$\{i\in \mathcal{B}\,|\,\sum_{\bd{j}\in\Z\times \{i\}\times\{0\}}S_{\bd{j}}(h_k(\theta)+v_1)-S_{\bd{j}}(v_1)\neq 0
\}
$$
is at most a finite number, denoted by $C_1$, independent of $k$, where
\begin{equation*}
  \mathcal{B}:=\left\{
                 \begin{array}{ll}
                   {\cup_{i=a_1-r}^{a_1-r+k-1}\{i\}}\setminus {\cup_{i=a_1-r}^{a_2+r}\{i\}}, & \textrm{if $a_2-a_1\leq 2r$;} \\
                   {\cup_{i=a_1-r}^{a_1-r+k-1}\{i\}}\setminus {(\cup_{i=a_1-r}^{a_1+r}\{i\}\cup\cup_{i=a_2-r}^{a_2+r} \{i\})}, & \textrm{if $2r <a_2- a_1$ and $a_1+k-a_2> 2r$;}\\
                   {\cup_{i=a_2-r}^{a_2-r+k-1}\{i\}}\setminus {\cup_{i=a_2-r}^{a_1+r+k}\{i\}}, & \textrm{if $2r <a_2- a_1$ and $a_1+k-a_2\leq 2r$.}
                 \end{array}
               \right.
\end{equation*}

Noticing that for any $i\in\Z$,
\begin{equation}\label{eq:0428}
\begin{split}
&|\sum_{\bd{j}\in\Z\times \{i\}\times\{0\}}[I_{1}(h_k(\theta))-c_1]|\\
 = &|\sum_{\bd{j}\in\Z\times \{i\}\times\{0\}}S_{\bd{j}}(h_k(\theta)+v_1)-S_{\bd{j}}(v_1)| \\
   = & |\sum_{\bd{j}\in\Z\times \{i\}\times\{0\}}\int_{0}^{1} \sum_{\bd{k}:\norm{\bd{k}-\bd{j}}\leq r} \partial_{\bd{k}}S_{\bd{j}}(v_1 +t h_k(\theta))\ud t \cdot h_k(\theta)(\bd{k})|\\
   \leq &L\sum_{\bd{k}\in\overline{\Z\times \{i\}\times\{0\}}}|h_k(\theta)(\bd{k})|\\
   \leq & L \sum_{\bd{k}\in\overline{\Z\times \{i\}\times\{0\}}}[w_1(\bd{k})-v_1(\bd{k})]\\
   \leq & L C\norm{w_1-v_1}_{\Lambda_{1}^{\bd{q}}}\\
   =:&M.
\end{split}
\end{equation}
Here $L=L(w_0-v_0, r)$, $C=C(r)$ are constants and
$\overline{\Z\times \{i\}\times\{0\}}=\cup_{\bd{j}\in \Z\times \{i\}\times\{0\}}\{\bd{k}\in\Z^n\,|\, \norm{\bd{k}-\bd{j}}\leq r\}$.
The first inequality in \eqref{eq:0428} needs to be explained.
Since $v_1 +t h_k(\theta)$ have bounded action with bounded constant $w_0-v_0$ for all $t\in [0,1]$, thus by (S\ref{eq:S1}), there exists $L=L(w_0-v_0, r)$ such that the first inequality in \eqref{eq:0428} holds (cf. \cite[the proof of Lemma 2.4]{Miao}).

By \eqref{eq:need} and \eqref{eq:gamma1p}, 
there is an $M(\bd{q})>0$ such that $I_{1}^{\bd{q}}(u)-c_{1}^{\bd{q}}\leq M(\bd{q})$ for all $u\in \MG_{1}^{\bd{q}}.$
So without loss of generality, assume $k> 4r+1$ and $M\geq \max(M(\bd{q}(1)),\cdots, M(\bd{q}(4r)))$.
We have the following three cases. 
\begin{itemize}
  \item If $a_2- a_1\leq 2r$,
  \begin{equation*}
  \begin{split}
  &|I_{1}^{\bd{q}(k)}(u)-kc_1|\\
  =&|\sum_{i=a_1-r}^{a_2+r}[I_{1}(\tau^{2}_{-i}u)-c_1]+\sum_{i\in\mathcal{B}}[I_{1}(\tau^{2}_{-i}u)-c_1]|\\
  \leq& M\cdot(a_2+2r+1-a_1)+M\cdot C_1\\
  \leq& M\cdot (4r+1+C_1).
  \end{split}
  \end{equation*}
  \item If $2r <a_2- a_1$ and $a_1+k-a_2> 2r$, i.e., $a_1\not\in (a_2-2r,a_2+2r)$,
  \begin{equation*}
  \begin{split}
  &|I_{1}^{\bd{q}(k)}(u)-kc_1|\\
  =&|\sum_{i=a_1-r}^{a_1+r}[I_{1}(\tau_{-i}^{2}u)-c_1]+\sum_{i=a_2-r}^{a_2+r}[I_{1}(\tau_{-i}^{2}u)-c_1]+\sum_{i\in\mathcal{B}}[I_{1}(\tau^{2}_{-i}u)-c_1]|\\
  \leq& 2M\cdot(2r+1)+M\cdot C_1\\
   =& 2M\cdot(2r+1+C_1).
  \end{split}
  \end{equation*}
  \item If $2r <a_2- a_1$ and $a_1+k-a_2\leq 2r$,
  \begin{equation*}
  \begin{split}
  &|I_{1}^{\bd{q}(k)}(u)-kc_1|\\
  =&|\sum_{i=a_2-r}^{a_1+k+r}[I_{1}(\tau_{-i}^{2}u)-c_1]+\sum_{i\in\mathcal{B}}[I_{1}(\tau^{2}_{-i}u)-c_1]|\\
  \leq &M\cdot(a_1+k+2r+1-a_2)+M\cdot C_1\\
  \leq &M\cdot (4r+1+C_1).
  \end{split}
  \end{equation*}
\end{itemize}
Thus \eqref{eq:5.3} follows by setting $M_1=(4r+2+2C_1)M$.
\end{proof}

Proceeding as in Theorem \ref{thm:2.4} we obtain infinitely many heteroclinic mountain pass solutions.
When we want to go further as in Section \ref{sec:mountainpass} to
see that heteroclinic mountain pass solutions do not constitute a foliation or laminaion,
we encounter more difficulties.
For instance, in the definition of $\underline{\theta}_t$, $\Phi_{t}^{0}h(\theta)\leq u_0$ means either
$\Phi_{t}^{0}h(\theta)$ touches $u_0$ from below or
$\Phi_{t}^{0}h(\theta)=u_0$.
But for unbounded domain $\Z\times\T^{\bd{q}}_{1}$, besides the above two possibilities,
$\Phi_{t}^{1}h(\theta)\leq u_0\in \Lambda_{1}^{\bd{q}}$ may lead to $\Phi_{t}^{1}h(\theta)< u_0$ and
\begin{equation}\label{eq:85698569}
  \Phi_{t}^{1}h(\theta)(\bd{i})\to u_0(\bd{i}) \quad\textrm{as   } |\bd{i}_1| \to 0.
\end{equation}
Notice that \eqref{eq:85698569} always holds since $(w_1-v_1)(\bd{i})\to 0$ as $|\bd{i}_1| \to 0$.
Thus if one want to
show a result similar to Theorem \ref{thm:3.1},
one need a new idea to exclude the third possibility in the proof of Theorem \ref{thm:3.1}.

\begin{rem}
We can obtain another heteroclinic mountain pass solution lying in the gap of $\MM_1(w_0,v_0)$ (please see \cite{LC} for the definition) and
we can construct more heteroclinic mountain pass solutions by the methods of Sections \ref{sec:mountainpass} and \ref{sec:2006} for higher dimension (cf. \cite[Section 5]{LC}).
Thus in the gap of the second laminations (in the sense of \cite{Miao}) we have heteroclinic mountain pass solutions.
\end{rem}

\begin{rem}
Throughout this paper, only the minimal and Birkhoff solutions corresponding to rotation vector $\bd{0}$ are considered (for the definition of rotation vector, please see \cite{LC}).
One can generalize the above results to minimal and Birkhoff solutions corresponding to rotation vector $\alpha\in\Q^{n}$ and obtain corresponding mountain pass solutions.
\end{rem}

\appendix
\section{Appendix}\label{sec:app1}

In this appendix, we prove Proposition \ref{prop:comparison}, Lemma \ref{lem:2.2} and Theorem \ref{thm:2.1} of Sections \ref{sec:mountainpass}.
First is the proof of
Proposition \ref{prop:comparison}, which follows
\cite[Theorem 6.2]{mramor} with slight modifications.

\noindent
\textbf{\emph{Proof of Proposition \ref{prop:comparison}.}}
Define $v(t):=\Phi^{0}_t (u_2) -\Phi^{0}_t (u_1)$.
So $v(0)\geq 0$, $v(0)\neq 0$ and $v$ satisfies the following linear ODE:
\begin{equation*}
\begin{aligned}
&\dot{v}(t)(\bd{i})\\
=&-W(\Phi^{0}_t (u_2))(\bd{i})+W(\Phi^{0}_t (u_1))(\bd{i})\\
=&\int_{0}^{1} \frac{\ud}{\ud \tilde{t}}\left(-\sum_{\bd{j}:\|\bd{i}-\bd{j}\| \leq r} \partial_{\bd{i}} S_{\bd{j}}(\Phi^{0}_t(u_1)+\tilde{t}(\Phi^{0}_t(u_2)-\Phi^{0}_t(u_1))+v_0)\right) \ud \tilde{t} \\
=&\sum_{\bd{j}:\|\bd{i}-\bd{j}\| \leq r}\sum_{\bd{k}:\|\bd{j}-\bd{k}\| \leq r}\left(\int_{0}^{1}-\partial_{\bd{i}, \bd{k}} S_{\bd{j}}(\Phi^{0}_t(u_1)+\tilde{t}(\Phi^{0}_t(u_2)-\Phi^{0}_t(u_1))+v_0) \ud \tilde{t}\right) v(t)(\bd{k})\\
=&:(H(t) v(t))(\bd{i}).
\end{aligned}
\end{equation*}
Similar calculation of \eqref{eq:lipschitz} implies $H ( t ): \Lambda_{0}^{\bd{p}}\to \Lambda_{0}^{\bd{p}}$ is
Lipschitz.
By (S\ref{eq:S3})-(S\ref{eq:S4}),
there is an $M > 0$ such
that the operators
$\tilde{H}(t):=H(t)+M \cdot\mathrm{Id}: \mathbb{R}^{\Z^d} \rightarrow \mathbb{R}^{\Z^{d}}$
are positive: $v\geq 0$ implies $\tilde{H}(t) v \geq 0$.

Note moreover that both the $H(t)$ and the $\tilde{H}(t)$ are uniformly bounded operators, whence the
ODEs
$\dot{v}=H(t) v$
and
$\dot{w}=\tilde{H}(t) w$
define well-posed initial value problems. More importantly, $v(t)$ solves
$\dot{v}=H(t) v$ if and only if $w(t):=e^{M t} v(t)$
solves
$\dot{w}=\tilde{H}(t) w$.
We will now prove that for every $t > 0$ and
every $\bd{i}$, $w(t)(\bd{i})>0$.
Then, obviously, $v(t)(\bd{i}) > 0$ as well, which then
completes the proof of Proposition \ref{prop:comparison}.

To prove the claim on $w(t)$, we solve the initial value problem
for
$\dot{w}=\tilde{H}(t) w$
by Picard iteration, that is we write
\begin{equation*}
w(t)=\left(\sum_{n=0}^{\infty} \tilde{H}^{(n)}(t)\right) w(0),
\end{equation*}
where the $\tilde{H}^{(n)}(t)$ are defined inductively by
\begin{equation*}
\tilde{H}^{(0)}(t)=id \quad \text { and } \quad \tilde{H}^{(n)}(t):=\int_{0}^{t} \tilde{H}(\tilde{t}) \circ \tilde{H}^{(n-1)}(\tilde{t}) d \tilde{t} \quad \text { for } n \geq 1.
\end{equation*}
Observe that the positivity of $\tilde{H}(t)$
implies that the
$\tilde{H}^{(n)}(t)$
are positive as well. Because $w(0) =v(0) \geq 0$ and $v(0)\neq 0$,
we can therefore estimate, for any $\bd{i},\bd{k}\in\Z^d$ with $\norm{\bd{i}-\bd{k}}=1$,
\begin{equation}\label{eq:dnedddd}
\begin{aligned}
w(t)(\bd{i})
&=\left(\sum_{n=0}^{\infty} \tilde{H}^{(n)}(t) w(0)\right)(\bd{i}) \geq\left(\int_{0}^{t}\tilde{H}(\bar{t})w(0)\ud \bar{t}\right) (\bd{i}) \\
& \geq\left(\int_{0}^{t} \int_{0}^{1}-\partial_{\bd{i}, \bd{k}} S_{\bd{i}}\left[\Phi^{0}_{\bar{t}}(u_1)+\tilde{t} (\Phi^{0}_{\bar{t}}(u_2)-\Phi^{0}_{\bar{t}}(u_1))+v_0\right] \ud \tilde{t} \ud \bar{t}\right) w(0)(\bd{k}). \end{aligned}
\end{equation}
Now choose a $\bd{k}\in\Z^d$ such that $w(0)(\bd{k})>0$ and recall that $\partial_{\bd{i}, \bd{k}} S_{\bd{i}}<0$.
Then from \eqref{eq:dnedddd} it follows that if $\norm{\bd{i}-\bd{k}}=1$, then for all $t > 0$, $w(t)(\bd{i}) > 0$.

To generalize to the case that
$\norm{\bd{i}-\bd{k}}\neq 1$, let us choose a sequence of lattice points
$\bd{j}(0)=\bd{k},\cdots, \bd{j}(N)=\bd{i}$ such that $\left\|\bd{j}(n)-\bd{j}(n-1)\right\|=1$ and $N=\|\bd{i}-\bd{k}\|$.
Then, by induction, $w\left(\frac{n t}{N}\right)(\bd{j}(n))>0$ for any $n\in \{0,\cdots,N\}$.
Thus, if $w(0)(\bd{k})>0$ and $t>0$, then $w(t)(\bd{i})>0$.
\qed

\bigskip

Next we prove Lemma \ref{lem:2.2}.

\noindent
\textbf{\emph{Proof of Lemma \ref{lem:2.2}.}}
\eqref{lem:2.2-1}.
Suppose
$(I^{\bd{p}}_{0})'  (u)=0$. Then $u+v_0$ is a solution of \eqref{eq:PDE}, thus
$\sum_{\bd{j}: \norm{\bd{j}-\bd{i}}\leq r}\partial_{\bd{i}}S_{\bd{j}}(u+v_0)=0$
for all $\bd{i}\in \Z^n$.
So $u$ is a solution of the initial problem:
\begin{equation*}
  \left\{
    \begin{array}{ll}
      -\partial_t \Phi_{t}^{0} (u)(\bd{i}) & =\sum_{\bd{j}: \norm{\bd{j}-\bd{i}}\leq r}\partial_{\bd{i}}S_{\bd{j}}(\Phi_{t}^{0} (u)+v_0), \quad \quad \textrm{for }t>0,\\
      \Phi_{0}^{0} (u)(\bd{i}) & =u(\bd{i}).
    \end{array}
  \right.
\end{equation*}
By the uniqueness of the solution of the above initial problem,
$\Phi_{t}^{0} (u)=u$.

\eqref{lem:2.2-2}.
By \eqref{eq:derivative},
\begin{equation}\label{eq:5.8}
\begin{aligned}
&\frac{\ud}{\ud t} I_{0}^{\bd{p}}\left(\Phi_{t}^{0}(u)\right)\\
=&\sum_{\bd{j}\in\T^{\bd{p}}_{0}} \sum_{\bd{k}:\norm{\bd{k}-\bd{j}}\leq r}\partial_{\bd{k}}S_{\bd{j}}(\Phi_{t}^{0}(u)+v_0)\cdot \partial_t \Phi_{t}^{0} (u)(\bd{k})\\
=&\sum_{\bd{j}\in\T^{\bd{p}}_{0}} \sum_{\bd{k}:\norm{\bd{k}-\bd{j}}\leq r}\partial_{\bd{k}}S_{\bd{j}}(\Phi_{t}^{0}(u)+v_0)\cdot \Big[- \sum_{\bd{l}: \norm{\bd{l}-\bd{k}}\leq r} \partial_{\bd{k}}S_{\bd{l}}(\Phi_{t}^{0} (u)+v_0)\Big]\\
=&-\sum_{\bd{j}\in\T^{\bd{p}}_{0}} \Big[\sum_{\bd{k}:\norm{\bd{k}-\bd{j}}\leq r} \partial_{\bd{j}}S_{\bd{k}}(\Phi_{t}^{0} (u)+v_0)\Big]^2\\
\leq & 0.
\end{aligned}
\end{equation}
Thus \eqref{lem:2.2-2} holds.

\eqref{lem:2.2-3}.
By \eqref{lem:2.2-1}, $\Phi_{t}^{0}(0)=0$ and $\Phi_{t}^{0}(w_0-v_0)=w_0-v_0$.
Thus by Proposition \ref{prop:comparison}, $0=\Phi_{t}^{0}(0)\leq\Phi_{t}^{0}(u)\leq \Phi_{t}^{0}(w_0-v_0)=w_0-v_0$ for any $u\in \MG^{\bd{p}}_{0}$.
So \eqref{lem:2.2-3} follows.

\eqref{lem:2.2-4}.
By \eqref{eq:5.8} and Lemma \ref{prop:2.2369852145},
$I_{0}^{\bd{p}} (\Phi_{t}^{0}(u))$ is non-increasing with a lower bound $c_{0}^{\bd{p}}$.
So $I_{0}^{\bd{p}}\left(\Phi_{t}^{0}(u)\right)$ has a limit as $t\to \infty$.
Since
\begin{equation*}
I_{0}^{\bd{p}}(u)-I_{0}^{\bd{p}}\left(\Phi_{t}^{0}(u)\right)=\int_{0}^{t} \sum_{\bd{j}\in\T^{\bd{p}}_{0}} \Big[\sum_{\bd{k}:\norm{\bd{k}-\bd{j}}\leq r} \partial_{\bd{j}}S_{\bd{k}}(\Phi^{0}_{\tilde{t}} (u)+v_0)\Big]^2 \ud \tilde{t},
\end{equation*}
we have
$$
\sum_{\bd{j}\in\T^{\bd{p}}_{0}} \Big[\sum_{\bd{k}:\norm{\bd{k}-\bd{j}}\leq r} \partial_{\bd{j}}S_{\bd{k}}(\Phi^{0}_{t_i} (u)+v_0)\Big]^2 \to 0
$$
for some sequence $t_i\to\infty$.
Since $\Phi^{0}_{t_i}(u)\in \MG^{\bd{p}}_{0}$, a compact set by Proposition \ref{prop:compact},
there is a $U\in \MG^{\bd{p}}_{0}$
such that $\Phi^{0}_{t_i}(u)\to U$ in $\Lambda_{0}^{\bd{p}}$
along a subsequence
of $t_i$.
Note that $\partial_{\bd{j}}S_{\bd{k}}$ is continuous,
thus
$$
\sum_{\bd{j}\in\T^{\bd{p}}_{0}} \Big[\sum_{\bd{k}:\norm{\bd{k}-\bd{j}}\leq r} \partial_{\bd{j}}S_{\bd{k}}(U+v_0)\Big]^2 = 0,
$$
which implies
$$
\sum_{\bd{k}:\norm{\bd{k}-\bd{i}}\leq r} \partial_{\bd{i}}S_{\bd{k}}(U+v_0) = 0,\quad \textrm{for  }\bd{i}\in\Z^n,
$$
i.e., $U+v_0$ is a solution of \eqref{eq:PDE}.

\eqref{lem:2.2-5}.
Firstly we claim that: there exists a constant $\epsilon>0$ such that if $K_c=\emptyset$, then
\begin{equation}\label{eq:5.10}
\left\|(I_{0}^{\bd{p}})'(u)\right\|_{(\Lambda_{0}^{\bd{p}})'} \geq \sqrt{2\epsilon}, \quad \textrm{for  }u \in (I_{0}^{\bd{p}})_{c-\epsilon}^{c+\epsilon}.
\end{equation}
Here $(\Lambda_{0}^{\bd{p}})'$ is the dual space of the Banach space $\Lambda_{0}^{\bd{p}}$ and $(I_{0}^{\bd{p}})^{t_1}_{t_2}:=\{u\in \MG_{0}^\bd{p}\,|\, t_2\leq I_{0}^{\bd{p}}(u)\leq t_1\}$.
Indeed, if the claim fails then for any $k\in\N$,
there are $(u_k)\subset (I_{0}^{\bd{p}})_{c-1/k}^{c+1/k} $ satisfying
$$
\left\|(I_{0}^{\bd{p}})'(u_k)\right\|_{(\Lambda_{0}^{\bd{p}})'}\to 0
$$
as $k\to\infty$.
But since $(u_k)\subset \MG^{\bd{p}}_{0}$, a compact set by Proposition \ref{prop:compact}, we may assume that $u_k\to U\in \MG^{\bd{p}}_{0}$ as $k\to \infty$ and $I_{0}^{\bd{p}}(U)=c$.
Thus $U\in K_c$, which contradicts $K_c= \emptyset$.
So \eqref{eq:5.10} holds.

Now we prove \eqref{lem:2.2-5}.
Suppose, by contradiction, that
there is a $u\in (I_{0}^{\bd{p}})_{c-\epsilon}^{c+\epsilon}$ such that $\Phi_{1}^{0}(u)\not\in (I_{0}^{\bd{p}})^{c-\epsilon}$,
then $c-\epsilon< I_{0}^{\bd{p}}(\Phi_{t}^{0}(u)) \leq c+\epsilon $ for all $t\in [0,1]$.
Since
\begin{equation}\label{eq:lllll}
\begin{split}
&\left\|(I_{0}^{\bd{p}})'(u)\right\|_{(\Lambda_{0}^{\bd{p}})'}\\
=&\sup_{\norm{v}_{\Lambda_{0}^{\bd{p}}}\leq 1}|(I_{0}^{\bd{p}})'(u)v|\\
=&\sup_{\norm{v}_{\Lambda_{0}^{\bd{p}}}\leq 1}\Big|\sum_{\bd{j}\in \T^{\bd{p}}_{0}}\sum_{\bd{k}:\norm{\bd{k}-\bd{j}}\leq r} \partial_{\bd{k}}S_{\bd{j}}(u+v_0)v(\bd{k})\Big|\\
\leq &\sup_{\norm{v}_{\Lambda_{0}^{\bd{p}}}\leq 1}\sum_{\bd{j}\in\T^{\bd{p}}_{0}}\Big|v(\bd{j})\cdot\sum_{\bd{k}:\norm{\bd{k}-\bd{j}}\leq r}\partial_{\bd{j}}S_{\bd{k}}(u+v_0)\Big|\\
\leq &\sup_{\norm{v}_{\Lambda_{0}^{\bd{p}}}\leq 1}\left(\sum_{\bd{j}\in \T^{\bd{p}}_{0}}\left|v(\bd{j})\right|^2\right)^{\frac{1}{2}}\left(\sum_{\bd{j}\in \T^{\bd{p}}_{0}}\left|\sum_{\bd{k}:\norm{\bd{k}-\bd{j}}\leq r}\partial_{\bd{j}}S_{\bd{k}}(u+v_0)\right|^2\right)^{\frac{1}{2}}\\
=&\left(\sum_{\bd{j}\in \T^{\bd{p}}_{0}}\left|\sum_{\bd{k}:\norm{\bd{k}-\bd{j}}\leq r}\partial_{\bd{j}}S_{\bd{k}}(u+v_0)\right|^2\right)^{\frac{1}{2}},
\end{split}
\end{equation}
we have
\begin{equation*}
  \begin{split}
    &I_{0}^{\bd{p}}(\Phi_{1}^{0}(u)) \\
    =& I_{0}^{\bd{p}}(u)+\int_{0}^{1} \frac{\ud}{\ud t}I_{0}^{\bd{p}}(\Phi_{t}^{0}(u))\ud t\\
      =& I_{0}^{\bd{p}}(u)+\int_{0}^{1} (I_{0}^{\bd{p}})' (\Phi_{t}^{0}(u))\partial_{t}\Phi_{t}^{0}(u) \ud t\\
      =&I_{0}^{\bd{p}}(u)-\int_{0}^{1}\sum_{\bd{j}\in\T^{\bd{p}}_{0}} \Big[\sum_{\bd{k}:\norm{\bd{k}-\bd{j}}\leq r} \partial_{\bd{j}}S_{\bd{k}}(\Phi_{t}^{0} (u)+v_0)\Big]^2\ud t\\
      \leq &(c+\epsilon) - 2\epsilon\\
      =&c-\epsilon,
  \end{split}
\end{equation*}
contrary to the existence of $u$.
Here the third equality follows from \eqref{eq:5.8} and the first inequality follows from \eqref{eq:5.10}-\eqref{eq:lllll}.
\qed

\bigskip
Now a heat flow method is used to give the following (cf. \cite[Proposition 2.12]{Rabi2014}):

\noindent
\textbf{\emph{Another proof of Theorem \ref{thm:2.1}.}}
For $\epsilon>0$, let $h\in C([0,1], \MH_{0}^{\bd{p}}\cap (I_{0}^{\bd{p}})^{d_{0}^{\bd{p}}+\epsilon})$.
Set $h_t:=\Phi_{t}^{0} \circ h$, then for any $t\geq 0$,
$h_t\in C([0,1], \MH_{0}^{\bd{p}}\cap (I_{0}^{\bd{p}})^{d_{0}^{\bd{p}}+\epsilon})$.
We claim that:
\begin{equation*}
  \textrm{there is a $\theta_{\infty} \in [0,1]$ satisfying $I_{0}^{\bd{p}}\left(h_{t}\left(\theta_{\infty}\right)\right) \geq d_{0}^{\bd{p}}$ for all $t\geq 0$.}
\end{equation*}
Indeed, for any $t\geq 0$ there exists $\theta_{t} \in [0,1]$ such that $I_{0}^{\bd{p}}\left(h_{t}\left(\theta_{t}\right)\right) \geq d_{0}^{\bd{p}}$.
We can extract a subsequence of $\theta_t$, say $\theta_{t_{k}}$ converging to some $\theta_{\infty} \in [0,1]$ as $t_k\to \infty$.
If $I_{0}^{\bd{p}}\left(h_{\tau}\left(\theta_{\infty}\right)\right)<d_{0}^{\bd{p}}$ for some $\tau>0$,
then
$I_{0}^{\bd{p}}\left(h_{\tau}\left(\theta_{t_{k}}\right)\right)<d_{0}^{\bd{p}}$ for large $t_k$.
Then enlarging $t_k$ if necessary
such that $t_{k}>\tau$,
by Lemma \ref{lem:2.2} \eqref{lem:2.2-2},
$I_{0}^{\bd{p}}\left(h_{t_{k}}\left(\theta_{t_{k}}\right)\right)\leq I_{0}^{\bd{p}}\left(h_{\tau}\left(\theta_{t_{k}}\right)\right)<d_{0}^{\bd{p}}$, which is a contradiction.
Thus the claim holds.

By Lemma \ref{lem:2.2} \eqref{lem:2.2-4},
there is a subsequence of $h_t(\theta_{\infty})$ converging to some $v_{\epsilon}\in \MG_{0}^{\bd{p}}$ such that
$v_{\epsilon}+v_0$ is a solution of \eqref{eq:PDE} and
\begin{equation*}
d_{0}^{\bd{p}}+\epsilon \geq I_{0}^{\bd{p}}\left(v_{\epsilon}\right)=\lim _{t \rightarrow \infty} I_{0}^{\bd{p}}\left(h_{t}\left(\theta_{\infty}\right)\right) \geq d_{0}^{\bd{p}}.
\end{equation*}
By Proposition \ref{prop:compact}, letting $\epsilon \to 0$ (up to a subsequence) completes the proof of Theorem \ref{thm:2.1}.
\qed

\subsection*{Acknowledgments}
The author wishes to express his gratitude to Professor Zhi-Qiang Wang (Utah State University)
for helpful discussion and for giving me a lot of encouragement.
The author is supported by the Fundamental Research Funds for the Central Universities (no. 34000-31610274).


\end{CJK*}
\end{document}